\definecolor{DarkBlue}{rgb}{0.00,0.00,0.55}
\definecolor{Black}{rgb}{0.00,0.00,0.00}
\crefname{equation}{}{}
\Crefname{equation}{Equation}{Equations}
\newcommand\blfootnote[1]{%
	\begingroup
	\renewcommand\thefootnote{}\footnote{#1}%
	\addtocounter{footnote}{-1}%
	\endgroup
}
\newcommand{\R}{\mathbb{R}}
\newcommand{\N}{\mathbb{N}}
\newcommand{\eps}{\varepsilon}
\DeclareMathOperator{\uu}{\mathfrak{u}}
\newcommand{\by}{\bm{y}}
\newcommand{\hby}{\hat{\bm{y}}}
\newcommand{\bd}{\bm{d}}
\newcommand{\hbd}{\hat{\bm{d}}}
\newcommand{\hbv}{\hat{\bm{v}}}
\newcommand{\bg}{\bm{g}}
\newcommand{\bv}{\bm{v}}
\newcommand{\be}{\bm{e}}
\renewcommand{\bf}{\bm{f}}
\newcommand{\hbf}{\hat{\bm{f}}}
\newcommand{\bu}{\bm{u}}
\newcommand{\bh}{\bm{h}}
\newcommand{\Dt}{\Delta t}
\newcommand{\oz}{\omega_0}
\newcommand{\ou}{\omega_1}
\newcommand{\fs}{\bf_S}
\newcommand{\fe}[1][]{  \bf_{\eta \ifthenelse{\isempty{#1}}{}{,#1}} }
\newcommand{\ye}{\by_\eta}
\newcommand{\rhos}{\rho_S}
\newcommand{\rhof}{\rho_F}
\newcommand{\rhoe}{\rho_\eta}
\newcommand{\bfe}[1][]{  \overline{\bf}_{\eta \ifthenelse{\isempty{#1}}{}{,#1}} }
\newcommand{\hbfe}{\hat{\bf}_{\eta}}
\newcommand{\tbfe}{\tilde{\bf}_{\eta}}
\newcommand{\ff}[1][]{  {\bf}_{F \ifthenelse{\isempty{#1}}{}{,#1}} }
\newcommand{\hfs}{\hat{\bf}_S}
\newcommand{\hff}{\hat{\bf}_F}
\newcommand{\hbh}{\hat{\bh}}
\newcommand{\tbh}{\tilde{\bh}}
\newcommand{\vz}{\upsilon_0}
\newcommand{\vu}{\upsilon_1}
\newcommand{\barc}{\bar{c}}
\newcommand{\barm}{\bar{m}}
\newcommand{\Cddf}{C_{\bf''}}
\newcommand{\Cdj}{C_{\bd}}
\definecolor{myred}{RGB}{0, 0, 0}
\newcommand{\moda}[1]{\textcolor{black}{#1}}
\newcommand{\modb}[1]{\textcolor{black}{#1}}
\newcommand{\modc}[1]{\textcolor{black}{#1}}
\newtheorem{theorem}{Theorem}[section]
\newtheorem{lemma}[theorem]{Lemma}
\theoremstyle{definition}
\newtheorem{definition}{Definition}[section]
\newtheorem{example}{Example}[section]
\theoremstyle{remark}
\newtheorem{remark}{Remark}[section]
\theoremstyle{plain}
\newtheorem{assumption}{Assumption}[section]
\crefname{assumption}{Assumption}{Assumptions}
\Crefname{assumption}{Assumption}{Assumptions}
\crefname{remark}{Remark}{Remarks}
\Crefname{remark}{Remark}{Remarks}
\crefname{example}{Example}{Examples}
\Crefname{example}{Example}{Examples}
\crefname{lemma}{Lemma}{Lemmas}
\Crefname{lemma}{Lemma}{Lemmas}
\begin{document}

\title{Mixed-precision explicit stabilized Runge--Kutta methods for single- and multi-scale differential equations\blfootnote{\textbf{Funding:} This research is supported by the ICONIC EPSRC Programme Grant (EP/P020720/1) and by the Swiss National Science Foundation, under grant No. $200020\_172710$.}
}

\author{M.~Croci\thanks{Oden Institute, University of Texas at Austin, Austin, TX, USA. (\textbf{\url{matteo.croci@austin.utexas.edu}})} \and G.~Rosilho de Souza\thanks{ ANMC, Institute of Mathematics, \'Ecole Polytechnique F\'ed\'erale de Lausanne, Lausanne, Switzerland. (\textbf{\url{giacomo.rosilhodesouza@epfl.ch}})}
}

\date{}

\maketitle

\begin{abstract}
Mixed-precision algorithms combine low- and high-precision computations in order to benefit from the performance gains of reduced-precision without sacrificing accuracy. In this work, we design mixed-precision Runge--Kutta--Chebyshev (RKC) methods, where high precision is used for accuracy, and low precision for stability.
Generally speaking, RKC methods are low-order explicit schemes with a stability domain growing quadratically with the number of function evaluations. For this reason, most of the computational effort is spent on stability rather than accuracy purposes.
In this paper, we show that a na\"ive mixed-precision implementation of any Runge--Kutta scheme can harm the convergence order of the method and limit its accuracy, and we introduce a new class of mixed-precision RKC schemes that are instead unaffected by this limiting behaviour. 
We present three mixed-precision schemes: a first- and a second-order RKC method, and a first-order multirate RKC scheme for multiscale problems.
These schemes perform only the few function evaluations needed for accuracy (1 or 2 for first- and second-order methods respectively) in high precision, while the rest are performed in low precision. We prove that while these methods are essentially as cheap as their fully low-precision equivalent, they retain the stability and convergence order of their high-precision counterpart. Indeed, numerical experiments confirm that these schemes are as accurate as the corresponding high-precision method.
\end{abstract}

\begin{textblock*}{\textwidth}(5.92cm,7.2cm)
	Dedicated to the memory of Assyr Abdulle (1971--2021).
\end{textblock*}

\begin{keywords}
Explicit stabilized Runge--Kutta methods, mixed-precision computing, rounding errors, reduced precision, floating-point arithmetic, multirate methods.\vspace{3pt}
\end{keywords}

\begin{MSC}
65L04, 65L06, 65L20, 65M12, 65M20, 65G50, 65G30, 65M15, 65Y99.
\end{MSC}

\section{Introduction}
\label{sec:intro}

Recent years saw the return of hardware-supported low-precision arithmetic, with a drastic increase in the number of chips (GPUs, CPUs, and chips designed for machine learning) supporting the fp16 and bfloat16 half-precision floating-point formats. As a consequence, the design and analysis of algorithms that perform all or part of the computations in reduced precision has now become an active field of investigation \cite{abdelfattah2021survey}. A popular technique is to carefully combine high- and low-precision computations so as to perform most of the heavy lifting in low precision while leaving the precision-sensitive calculations in high precision. The result is a mixed-precision algorithm\footnote{Or multi-precision algorithm, if more than two floating point formats are used.}. Mixed-precision algorithms aim to achieve the best of two worlds: perform computations that are as stable and as accurate as their fully high-precision equivalent, but with the performance benefits (in terms of speed, memory, and energy consumption) of low-precision computations. For these reasons, mixed-precision algorithms have become very popular in the numerical linear algebra \cite{abdelfattah2021survey}, machine learning \cite{das2018mixed,mellempudi2019mixed,micikevicius2017mixed}, climate and weather model simulation \cite{ackmann2021mixed,klower2020number,klower2021fluid,paxton2021climate,vavna2017single,duben2017study}, and in the numerical integration literature \cite{BGG21,Gra20,hairer2008achieving}.

In this paper we design mixed-precision Runge--Kutta (RK) methods for stiff differential equations
\begin{equation}\label{eq:ode}
\by'=\bf(\by),\qquad\qquad \by(0)=\by^0,
\end{equation}
where $\by(t)\in\R^n$ and $\bf:\R^n\rightarrow\R^n$ is a twice differentiable function. We also consider multirate problems 
\begin{equation}\label{eq:mrode}
\by'=\ff(\by)+\fs(\by),\qquad\qquad \by(0)=\by^0,
\end{equation}
where $\ff$ is a cheap but stiff term associated to fast (F) time-scales and $\fs$ is an expensive but mildly stiff term associated to slower (S) time-scales. We do not assume any scale separation, hence, in addition to all fast terms, $\ff$ may contain part of the slow dynamics too. For instance, $\ff$ can be associated to a discrete Laplacian.

Standard explicit Runge--Kutta schemes are exceedingly inefficient for the solution of stiff problems. Therefore, we must resort to implicit or explicit stabilized methods. Usually implicit methods are unconditionally stable at the price of solving a possibly nonlinear system at every time step, which are solved by Newton methods in conjunction with linear algebra routines; therefore, their performance strongly depends on nonlinearities, system size and efficiency of direct solvers or preconditioners when the problem size demands iterative solvers. Furthermore, convergence of Newton methods is not guaranteed for large step sizes. Explicit stabilized Runge--Kutta methods (ESRK) are a compromise between standard explicit and implicit methods. They are fully explicit, hence do not require the solution of linear systems, and their stability domain along the negative real axis grows as $s^2$ for an $s$-stage method. Due to this quadratic relation between work load and stability ESRK methods do not have any step size restriction, require few function evaluations and compete with implicit methods, especially for large nonlinear problems \cite{Abd02,AGR20,DDD13,Med98,VeS04}. A few families of ESRK methods exist, such as the DUMKA methods based on compositions of Euler steps \cite{Leb94,LeM94,Med98}, the Runge--Kutta--Chebyshev (RKC) methods based on recursive formulas for Chebyshev polynomials \cite{SSV98,HoS80,VerwerHundsdorfer1990RKC}, the orthogonal Runge--Kutta--Chebyshev (ROCK) methods based on optimal orthogonal polynomials \cite{Abd02,AbM01} and the Runge--Kutta--Legendre (RKL) methods based on Legendre polynomials \cite{Meyer2014}. \moda{More recently, multirate RKC (mRKC) methods \cite{AGR20} for \cref{eq:mrode}, stochastic versions of RKC, ROCK and mRKC \cite{AAV18,AbL08,AbR22b,AVZ13b}, methods for advection-diffusion problems \cite{Alm22,TaX20} and wave equations \cite{CHS00,GMS21} have been introduced.}

Many new mixed-precision algorithms are being developed by the numerical linear algebra community, among which algorithms for matrix factorization \cite{amestoy2021mixed,blanchard2020mixed,lopez2020mixed,yamazaki2015mixed,yang2021rounding}, iterative refinement \cite{amestoy2021five,carson2017new,carson2018accelerating}, and Krylov subspace methods \cite{agullo2020exploring,gratton2019exploiting}. For an overview of recent developments in mixed-precision computing we refer to this excellent community review \cite{abdelfattah2021survey}. The development of preconditioned iterative linear solvers is an active field of investigation due to the complications arising with loss of orthogonality of the Arnoldi/Lanczos vectors \cite{bjorck1992loss,meurant2006lanczos}. However, some new fascinating results have been obtained for mixed-precision GMRES \cite{gratton2019exploiting}, and flexible GMRES \cite{agullo2020exploring}. Mixed-precision multigrid solvers based on iterative refinement have also been developed \cite{mccormick2021algebraic,tamstorf2021discretization}. We were unable to find any work in the numerical optimization literature specific to mixed-precision nonlinear solvers. However, the work by Tisseur \cite{tisseur2001newton} and various results on inexact Newton-methods \cite{dembo1982inexact} might be applicable here. Overall, there is still much to discover about the behaviour of all the ingredients required by implicit timestepping schemes in finite precision (iterative linear and nonlinear solvers, their preconditioning, and the interplay between these). These considerations inspired our research into explicit stabilized methods. 

In this paper we design and analyze mixed-precision explicit stabilized schemes for \cref{eq:ode,eq:mrode} based on the RKC and the mRKC schemes, respectively. The schemes preserve the original order of convergence of the high-precision methods, but the number of high-precision evaluations of the right-hand side is reduced to the bare minimum. For instance, in an $s$-stage first-order RKC method (RKC1) only one function evaluation is needed for accuracy, and the remaining $s-1$ evaluations are only used to increase stability. With an appropriate reformulation of the scheme, we are able to perform only one function evaluation in high precision and the remaining $s-1$ in a low-precision format without impacting accuracy. Our methodology consists in linearizing the numerical scheme and carefully evaluating the Jacobian of the right-hand side in reduced-precision arithmetic. The mixed-precision first- and second-order RKC schemes for \cref{eq:ode} that we propose in \cref{sec:mixed-precision-ESRK} require only one or two, respectively, high-precision evaluations of the right-hand side. The first-order mixed-precision mRKC scheme for multirate problems \cref{eq:mrode} introduced in \cref{sec:multirate} requires only one high-precision evaluation of $\ff$ and $\fs$. All the function evaluations needed for stability are exclusively performed in a cheaper low-precision format.
In addition to proving that the mixed-precision schemes preserve the right order of convergence, we study the propagation of rounding errors and briefly discuss how the low-precision computations can impact stability. Rounding errors destroy any spectral relation between the integration variables and therefore we were not able to provide a rigorous stability analysis in the traditional ODE sense. Nevertheless, we provide an extensive numerical study of the stability and convergence properties of our mixed-precision schemes.

To our knowledge, the only other works on mixed-precision RK methods in the literature are by Grant \cite{Gra20}, Burnett et al.~\cite{BGG21}, and by Hairer et al.~\cite{hairer2008achieving}. However, their focus is on implicit RK methods and consequently their approach is quite different from ours. In \cite{Gra20} and \cite{BGG21}, the authors consider mixed-precision implicit RK methods where the implicit systems are solved in reduced precision. This operation impacts the order of convergence of the scheme, which is then recovered by performing additional explicit stages in high precision. In our work we instead preserve the order of convergence by performing a single stage in high precision and the remaining in low precision without altering the overall number of stages needed. The authors of \cite{Gra20} and \cite{BGG21} cast their strategy in the framework of additive RK methods and the order conditions are derived using B-series. However, stability is not addressed in general. In \cite{hairer2008achieving}, the authors employ quad precision to evaluate the coefficients of implicit RK methods, and double precision for the remaining computations. However, their focus is on the long-time integration of Hamiltonian systems and consequently the structure and objectives of their work is different from ours.

The remainder of this paper is structured as follows. In \cref{sec:preliminaries} we recall the most common floating-point formats, we introduce the rounding error model used in the paper, and we recall the first- and second-order RKC methods. In \cref{sec:mixed-precision-ESRK} we motivate mixed-precision Runge--Kutta methods, and we present the mixed-precision RKC schemes, together with a few strategies for cheap Jacobian evaluations in reduced precision that avoid the insurgence of stagnation. Later in the same section we also analyze the accuracy and stability of the mixed-precision RKC schemes. In \cref{sec:multirate} we introduce and analyze the mixed-precision multirate RKC scheme. In \cref{sec:numerical_results} we confirm numerically the accuracy and stability properties of the schemes. Finally, in \cref{sec:conclusions} we present our conclusions and final remarks.

\section{Preliminaries}
\label{sec:preliminaries}

\subsection{Floating-point formats used and rounding error model}
A mixed-precision algorithm uses a combination of high- and low-precision computations so as to maximize stability and efficiency. To set the scene, in this paper we only consider the (common) floating point number formats presented in Table \ref{tab:precision}. We typically refer to double precision as ``high precision'' and to any of the other formats in Table \ref{tab:precision} as ``low precision'', albeit our theory and algorithms are not restricted to these choices and are still perfectly valid under other combinations and formats. 

\begin{table}[h!]
	\centering
	\begin{tabular}{@{}llllcc@{}}
		\toprule
		\multicolumn{1}{c}{Format} & \multicolumn{1}{c}{$u$} & \multicolumn{1}{c}{$x_{\min}$} & \multicolumn{1}{c}{$x_{\max}$} & $t$ & exponent bits \\ \midrule
		bfloat16                   & $3.91\times 10^{-3}$    & $1.18\times 10^{-38}$          & $3.39\times 10^{38}$           & $8$         & $8$      \\
		fp16                       & $4.88\times 10^{-4}$    & $6.10\times 10^{-5}$           & $6.55\times 10^{4}$            & $11$        & $5$      \\
		fp32 (single)              & $5.96\times 10^{-8}$    & $1.18\times 10^{-38}$          & $3.40\times 10^{38}$           & $24$        & $8$      \\
		fp64 (double)              & $1.11 \times 10^{-16}$  & $2.22\times 10^{-308}$         & $1.80\times 10^{308}$          & $53$        & $11$     \\ \bottomrule
	\end{tabular}
	\caption{\textit{Overview of the floating point systems mentioned in the paper. Here $u=2^{-t}$ is the roundoff unit, $x_{\min}$ in the smallest normalized positive number, $x_{\max}$ is the largest finite number and $t$ is the precision. While bfloat16 and fp32 have the same range and exponent bits, fp16 has a smaller roundoff unit (higher precision) at the cost of a smaller range.}}
	\label{tab:precision}
	\vspace{-6pt}
\end{table}
\begin{assumption}
	The effects related to floating-point range (e.g.~underflow/overflow) are ignored here for simplicity. However, we remark that most range issues in our mixed-precision algorithms can easily be avoided by simple rescaling and a careful implementation, cf.~Remark \ref{rem:matrix_squeezing}.
\end{assumption}
Let us adopt the following standard floating point error model for round-to-nearest (cf.~Chapter 2 of \cite{higham2002accuracy}):
\begin{align}
\label{eq:std_floating_point_error_model}
\widehat{(a \text{ op } b)} = (a \text{ op } b)(1 + \delta),\quad |\delta| < u,\quad\text{op}\in\{+,-,\times,\backslash\},
\end{align}
where $u$ is the roundoff unit (cf.~\Cref{tab:precision}) and $\delta$ is called a roundoff error. Here and in the rest of the paper we use hats to denote quantities that are the result of finite precision computations. By using this model it is possible to derive \emph{a priori} rounding error bounds for a variety of different algorithms and operations \cite{higham2002accuracy}. The main result we employ in this paper is the backward error bound for matrix-vector products (cf.~Section 3.5 in \cite{higham2002accuracy}):
for a matrix $A\in\R^{\bar{n}\times n}$ with at most $\barm$ nonzero entries per row, and a vector $\bm{x}\in\R^n$ we have that computing the product $\bm{y}=A\bm{x}$ in finite precision yields instead the vector $\hat{\bm{y}}\in\R^{\bar{n}}$ satisfying\footnote{This is the same result as in Section 3.5 of \cite{higham2002accuracy}, but it accounts for the fact that multiplications by zero are performed exactly.}
\begin{align}
\label{eq:matvecs_bound}
\hat{\bm{y}} = \widehat{A\bm{x}} = (A + \Delta A)\bm{x},\quad\text{with}\quad |\Delta A|\leq \gamma_{\barm} |A|,\quad(\Delta A\in\R^{\bar{n}\times n}),\quad \gamma_{\barm} = \frac{\barm u}{1-\barm u}.
\end{align}
Here we denote by $|\cdot|$ the entrywise absolute value and for round-to-nearest $\gamma_{\barm}$ can be replaced with $\bar{\gamma}\barm=\barm u(1+4u+2u^2)\approx \barm u$. This last result is a consequence of the backward error bound for inner products \cite[Corollary 3.2]{LangeRump2017}, after accounting for possibly non-representable entries. Equation \eqref{eq:std_floating_point_error_model} straight-forwardly implies the normwise bounds $||\Delta A \bm{x}||_p\leq \gamma_{\barm}||A||_p||\bm{x}||_p$ for $p=1,\infty$. We will also need a bound for the spectral norm, which we provide in the following lemma.
\begin{lemma}[Lemma 6.6. in \cite{higham2002accuracy}]\label{lemma:boundDA}
	Let $A,B\in\R^{\bar{n}\times n}$ have at most $\barm$ nonzero entries per row and column, and satisfy $|B|\leq c|A|$ for some constant $c>0$. Then $||B||_2\leq c\min(\barm,r^{1/2})||A||_2$, where $r=\text{rank}(A)$. Setting $c=\bar{\gamma}\barm$, for $u$ sufficiently small we then have $||\Delta A||_2\leq \barc\barm^2u||A||_2$ for $\barc=1 + 4u + 2u^2\approx 1$.
\end{lemma}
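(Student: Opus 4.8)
The plan is to reduce the bound to three elementary facts about matrix norms and then combine them. \textbf{Step 1 (monotonicity under entrywise domination).} If $|B|\le c|A|$ then $||B||_1\le c||A||_1$, $||B||_\infty\le c||A||_\infty$, and $||B||_F\le c||A||_F$, since each of these norms is a nondecreasing function of the entrywise absolute values. In particular $|B|\le c|A|$ already forces $b_{ij}=0$ wherever $a_{ij}=0$, so $B$ automatically inherits the sparsity pattern of $A$, and the hypothesis that $B$ has at most $\barm$ nonzeros per row and column is redundant. \textbf{Step 2 (reducing the spectral norm to the other norms).} I would use the interpolation bound $||B||_2\le(||B||_1\,||B||_\infty)^{1/2}$, which follows from $||B||_2^2=\rho(B^TB)\le||B^TB||_1\le||B^T||_1\,||B||_1=||B||_\infty\,||B||_1$, together with the trivial inequality $||B||_2\le||B||_F$.

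\textbf{Step 3 (turning sparsity and rank into spectral-norm comparisons).} From $|a_{ij}|=|\be_i^TA\be_j|\le||A||_2$ and the assumption that each row of $A$ has at most $\barm$ nonzeros, $||A||_\infty=\max_i\sum_j|a_{ij}|\le\barm\max_{i,j}|a_{ij}|\le\barm||A||_2$; applying the same argument to the columns gives $||A||_1\le\barm||A||_2$. On the other hand, if $\sigma_1\ge\cdots\ge\sigma_r>0$ are the nonzero singular values of $A$, with $r=\text{rank}(A)$, then $||A||_F^2=\sum_{k=1}^r\sigma_k^2\le r\sigma_1^2=r||A||_2^2$, i.e.\ $||A||_F\le r^{1/2}||A||_2$. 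Combining the three steps, $||B||_2\le(||B||_1\,||B||_\infty)^{1/2}\le c(||A||_1\,||A||_\infty)^{1/2}\le c\barm||A||_2$ and $||B||_2\le||B||_F\le c||A||_F\le cr^{1/2}||A||_2$; keeping the smaller of the two bounds yields $||B||_2\le c\min(\barm,r^{1/2})||A||_2$.

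For the statement about $\Delta A$, I would apply this with $B=\Delta A$. By \eqref{eq:matvecs_bound} in round-to-nearest arithmetic, $|\Delta A|\le\bar\gamma\barm\,|A|$ with $\bar\gamma\barm=\barm u(1+4u+2u^2)$, and $\Delta A$ has the sparsity pattern of $A$, hence at most $\barm$ nonzeros per row and column. Taking $c=\bar\gamma\barm$ and keeping the $\barm$-branch of the minimum gives $||\Delta A||_2\le\bar\gamma\barm\cdot\barm\,||A||_2=(1+4u+2u^2)\barm^2u\,||A||_2=\barc\barm^2u\,||A||_2$ with $\barc=1+4u+2u^2$; the smallness of $u$ is needed only to justify the round-to-nearest replacement of $\gamma_{\barm}$ by $\bar\gamma\barm$ and to keep $\barc\approx1$.

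The argument is entirely elementary, so I do not expect a genuine obstacle. The one step that takes a moment's thought is Step 3: recognizing that it is the \emph{sparsity count} $\barm$ — not the ambient dimensions $\bar{n}$, $n$ — that controls the gap between $||\cdot||_1$, $||\cdot||_\infty$ and $||\cdot||_2$, which hinges on the pointwise estimate $\max_{i,j}|a_{ij}|\le||A||_2$, and, dually, that it is $\text{rank}(A)$ that controls the gap between $||\cdot||_F$ and $||\cdot||_2$. Everything else is routine bookkeeping.
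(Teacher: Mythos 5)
Your proof is correct, and it uses the same three basic ingredients as the paper's proof (the interpolation bound $||M||_2\leq(||M||_1||M||_\infty)^{1/2}$, the sparsity estimate $||A||_1,||A||_\infty\leq\barm\max_{ij}|A_{ij}|\leq\barm||A||_2$, and the rank bound $||A||_F\leq r^{1/2}||A||_2$), but it routes them slightly differently. The paper first invokes Lemma 6.6 of Higham to pass to entrywise absolute values, $||B||_2\leq||\,|B|\,||_2\leq c\,||\,|A|\,||_2$, i.e.\ it needs monotonicity of the \emph{spectral} norm under entrywise domination, and only then applies the interpolation and Frobenius bounds to $|A|$. You instead apply the interpolation bound and $||B||_2\leq||B||_F$ directly to $B$, and transfer to $A$ using only the monotonicity of the $1$-, $\infty$- and Frobenius norms under $|B|\leq c|A|$, which is elementary; so your argument is self-contained and does not actually need the cited Lemma 6.6, at no cost in the constants. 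The remaining bookkeeping is identical: the per-column and per-row sparsity counts give the $\barm$ branch, the singular-value argument gives the $r^{1/2}$ branch, and specializing $B=\Delta A$ with $c=\barm u(1+4u+2u^2)$ from the round-to-nearest matrix--vector bound yields $||\Delta A||_2\leq\barc\barm^2u||A||_2$ exactly as in the paper. Your side remark that the sparsity hypothesis on $B$ is redundant (it is inherited from $|B|\leq c|A|$) is also correct.
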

\begin{proof}
	Let $||A||_{\max}=\max_{ij}|A_{ij}|$. This result is essentially Lemma 6.6 in \cite{higham2002accuracy} after accounting for the sparsity in $A$. Owing to Lemma 6.6 in \cite{higham2002accuracy} we have that $||B||_2\leq || \ |B|\ ||_2\leq c||\ |A|\ ||_2$. Since the $||\cdot{}||_2$ norm of a symmetric matrix is its spectral radius which in turn is a lower bound for any vector-induced norm, we have $||\ |A|\ ||_2=||\ |A|^T|A|\ ||_2^{1/2}\leq ||\ |A|^T|A|\ ||_\infty^{1/2} \leq (||A||_1||A||_\infty)^{1/2}\leq \barm ||A||_{\max}\leq \barm ||A||_2$. For the second inequality, we use the more traditional bound $||\ |A|\ ||_2\leq ||A||_F\leq r^{1/2}||A||_2$.
\end{proof}

\subsection{The Runge--Kutta--Chebyshev methods}\label{sec:RKCmethods}
\label{sec:rkc}
The main goal in the design of classical explicit Runge--Kutta schemes is to reach the highest possible order $p$ for the given number of stages $s$. For instance, as long as $s\leq 4$, we can achieve $p=s$. However, this design strategy leaves no room for enhancing stability. In contrast, explicit stabilized Runge--Kutta methods fix the order $p$ and use an increased number of stages $s\geq p$ to improve the stability properties of the scheme, thereby relaxing the stringent stability conditions affecting classical explicit methods.

In this paper we concentrate on first- and second-order Runge--Kutta--Chebyshev (RKC) methods \cite{SSV98,HoS80,VerwerHundsdorfer1990RKC}, which we denote with RKC1 and RKC2 respectively. We consider problems of the form \cref{eq:ode}, and we present these schemes in delta form since it leads to smaller rounding errors \cite[IV.8]{HairerWanner1996}. Let $\by^n$ be an approximation of $\by(t^n)$, where $t^n=n\Dt$ and $\Dt$ is the step size. One step, of size $\Dt$, of an $s$-stage RKC scheme is given by the recursion
\begin{equation}\label{eq:RKC}
\begin{dcases}
\bd_0 = \bm{0}, \quad \bd_1=\mu_1\Dt\bm{f}(\by^n),\\
\bd_j = \nu_j\bd_{j-1}+\kappa_j\bd_{j-2}+\mu_j\Dt \bm{f}(\by^n+\bd_{j-1})+\gamma_j\Dt\bm{f}(\by^n), \quad j=2,\ldots,s,\\
\by^{n+1}= \by^n+\bd_s.
\end{dcases}
\end{equation}
The coefficients $\mu_j,\nu_j,\kappa_j,\gamma_j$ are given by, for $j=2,\ldots,s$,
\begin{equation}\label{eq:coeffRKC}
\mu_1=b_1\ou,\qquad \mu_j=2\ou b_j/b_{j-1},\qquad \nu_j=2\oz b_j/b_{j-1},\qquad  \kappa_j=-b_j/b_{j-2}, \qquad \gamma_j=-\mu_j a_{j-1},
\end{equation}
and $a_{j}=1-b_{j}T_{j}(\oz)$ for $j=0,\ldots,s$, where $T_j(x)$ is the Chebyshev polynomial of the first kind of degree $j$, defined recursively by
\begin{equation}
T_0(x)=1,\qquad T_1(x)=x,\qquad T_j(x)=2xT_{j-1}(x)-T_{j-2}(x), \quad j\geq 2.
\end{equation}
The core coefficients $\oz,\ou$ and $b_j$ for $j=0,\ldots,s$ depend on the order $p=1,2$, the number of stages $s$, and the so-called damping parameter $\eps\geq 0$. For the first-order RKC1 method it holds
\begin{equation}\label{eq:coeffRKC1}
\oz=1+\eps/s^2,\quad \ou=T_s(\oz)/T_s'(\oz),\quad b_j=1/T_j(\oz), \quad j=0,\ldots,s
\end{equation}
and for the second-order RKC2 method
\begin{equation}\label{eq:coeffRKC2}
\oz=1+\eps/s^2,\quad \ou=T_s'(\oz)/T_s''(\oz),\quad b_0=b_1=b_2, \quad b_j=T_j''(\oz)/T_j'(\oz)^2, \quad j=2,\ldots,s.
\end{equation}
Typical damping parameters are $\eps=0.05$ for RKC1, and $\eps=2/13$ for RKC2 \cite{VerwerHundsdorfer1990RKC}. The purpose of the damping parameter is to increase stability in the imaginary direction, making RKC methods more resilient to small perturbations \cite{HairerWanner1996}. Note that for the RKC1 scheme $a_j=0$ and therefore $\gamma_j=0$. We remark that the RKC1 scheme with $s=1$ is the explicit Euler method. Let
\begin{equation}\label{eq:defc}
c_0=0,\qquad c_1=\mu_1, \qquad c_j=\nu_jc_{j-1}+\kappa_jc_{j-2}+\mu_j+\gamma_j, \quad j=2,\ldots,s.
\end{equation}
Verwer et al.~in \cite{VerwerHundsdorfer1990RKC} show that for RKC1 and RKC2 respectively we have $\bd_j=c_j\Dt\bf(\by^n)+O(\Dt^2)$ and $\bd_j=c_j\Dt\bf(\by^n)+c_j^2\Dt^2/2\bf'(\by^n)\bf(\by^n)+O(\Dt^3)$. Therefore $\by^n+\bd_j$ is respectively a first- or a second-order approximation of the exact solution at time $t^n+c_j\Dt$.

When applied to the Dahlquist test equation $y'=\lambda y$ with $\lambda\in\mathbb{C}_-$, the RKC method \cref{eq:RKC} with coefficients \cref{eq:coeffRKC} yields
\begin{equation}\label{eq:defRsRKC}
y^{n+1}=R_s(z)y^n,\quad\mbox{with}\quad R_s(z)=a_s+b_s T_s(\oz+\ou z), \quad z=\lambda\Dt.
\end{equation}
The polynomial $R_s(z)$ is called the stability polynomial of the method. Using the properties of Chebyshev polynomials, such as the fact that $|T_s(x)|$ is an even function, increasing for $x\geq 1$, that $T_s(1)=1$ and $T_s(x)\in [-1,1]$ for $x\in [-1,1]$, it is possible to show that $|R_s(z)|\leq 1$ for all $z$ such that $-\oz\leq \oz+\ou z\leq \oz$, i.e. for all $z\in [-\ell_s^\eps,0]$, where $\ell_s^\eps=2\oz/\ou$ \cite{VerwerHundsdorfer1990RKC}. We call $\ell_s^\eps$ the real stability boundary of the method.
Let 
\begin{equation}\label{eq:defbetas}
\beta^1(s,\eps)=\left(2-\frac{4}{3}\eps\right)s^2,\qquad\qquad \beta^2(s,\eps)=\frac{2}{3}\left(1-\frac{2}{15}\eps\right)(s^2-1).
\end{equation}
In \cite{VerwerHundsdorfer1990RKC} it is shown that for RKC1 $\ell_s^\eps\geq\beta^1(s,\eps)$ and for RKC2 $\ell_s^\eps\geq\beta^2(s,\eps)$, therefore the stability domain of both methods grows \emph{quadratically}, with respect to the number of function evaluations $s$, along the negative real axis.
Moreover, for $z\in\mathbb{R}_-$, $|z|\leq \beta^p(s,\eps)$, $p=1,2$, is a sufficient condition for stability. Note that the real stability boundary $\ell^\eps_s$ of RKC2 grows slower than for RKC1 (the constant in \cref{eq:defbetas} is smaller).

For more general right-hand sides, as in \cref{eq:ode}, the number of stages $s$ is chosen at each time step so that $\Dt\rho\leq\beta^p(s,\eps)$, where $\rho$ is the spectral radius of the Jacobian of $\bm{f}$ evaluated at $\by^n$. Note that $\rho$ can cheaply be approximated using nonlinear power methods \cite{Lin72,Ver80}.
We note that RKC methods do not have any step size restriction since for any given $\Dt$ it is sufficient to take $s$ large enough to guarantee stability. We also remark that for RKC methods the number of function evaluations is proportional to $\sqrt{\Dt\rho}$, instead of $\Dt\rho$ as for classical explicit RK methods such as, e.g.~RK4 or DOPRI45.

\begin{remark}\label{rem:nonautonomous}
	In this paper we only consider autonomous problems for simplicity and economy of notation. Let us note that the mixed-precision schemes we introduce can straight-forwardly be extended to nonautonomous problems after applying this simple modification:  for nonautonomous problems $\by'=\bf(t,\by)$, we simply replace $\bf(\by^n+\bd_j)$, $j=0,\ldots,s-1$, in \cref{eq:RKC} with $\bf(t^n+c_j\Dt,\ \by^n+\bd_j)$, and $c_j$ as in \cref{eq:defc}.
\end{remark}

\section{Order-preserving mixed-precision RKC methods}
\label{sec:mixed-precision-ESRK}

Let us first define what we mean when we write that a mixed-precision integrator scheme for \cref{eq:ode} is order-preserving. We first introduce our main working assumption; which we consider implicitly to hold true for the remaining of the paper.
\begin{assumption}
	\label{assumption:high_precision_exact}
	Computations performed in high precision are exact.
\end{assumption}
Here by ``high precision'' we indicate the highest precision used in the mixed-precision scheme (typically double or single precision). In a mixed-precision RK scheme, computations performed in low precision produce large roundoff errors and a na\"ive implementation may lead to an order-reduction phenomenon or even stagnation. When this happens, the mixed-precision scheme has a convergence order $q$, where $q$ is smaller than the convergence order $p$ of the original scheme. This motivates the following definition:
\begin{definition}[Order-preserving mixed-precision scheme]
	\label{def:order-preserving}
	Consider a $p$-th order timestepping scheme. A mixed-precision implementation of the same scheme is order-preserving up to order $q\in\N$, $1\leq q\leq p$ (or $q$-order-preserving), if it converges with order $q$ under Assumption \ref{assumption:high_precision_exact}. If a mixed-precision implementation does not converge as $\Dt\rightarrow 0$ (i.e.~the error stagnates or blows up as $\Dt\to 0$), then it is not order-preserving.
\end{definition}

Throughout this section it will be clearer why a mixed-precision scheme implemented na\"ively might not be order-preserving and is thus unable to reduce the error below the machine precision of the low-precision format.
To our knowledge, the methods we present in this paper are the first explicit mixed-precision order-preserving methods to be presented in the literature. We remark that implicit order-preserving methods are instead available \cite{BGG21,Gra20}, although the approach used for these is considerably different.

\subsection{A heuristic introduction to mixed-precision explicit Runge--Kutta schemes}
To set the scene, we start by considering linear problems. Let us first consider a generic $s$-stage order $p$ explicit RK method and take $\bm{f}(\by)=A\by$. We then know that the exact solution to \eqref{eq:ode}  and one step of the numerical scheme in exact arithmetic are respectively given by
\begin{align}
\label{eq:linear_exactapprox}
\bm{y}(t^{n+1}) = \exp(\Dt A)\bm{y}(t^n) = \sum\limits_{j=0}^\infty \frac{(\Dt A)^j}{j!}\bm{y}(t^n), \quad\quad
\bm{y}^{n+1} = R(\Delta t A)\bm{y}^n,
\end{align}
where $R(z)$, a polynomial of degree $s$, is the stability function of the method. The method is then of order $p$ if $|\exp(z)-R(z)| = O(z^{p+1})$, i.e.~if the coefficients of the $p+1$ lowest-degree terms of $R(z)$ match the first $p+1$ terms in the exponential series. The second equation in \eqref{eq:linear_exactapprox} can then be written as
\begin{align}
\label{eq:linear_approx}
\bm{y}^{n+1} = \sum\limits_{j=0}^{p} \frac{(\Delta t A)^j}{j!}\bm{y}^n + \sum\limits_{j=p+1}^{s} (1+a_j) \frac{(\Delta t A)^j}{j!}\bm{y}^n,
\end{align}
where $a_j$ for $j=p+1,\dots,s$ are coefficients which are typical of the method. After setting $\bm{y}^n=\bm{y}(t^n)$ and subtracting \eqref{eq:linear_approx} from the first equation in \eqref{eq:linear_exactapprox} it is then clear that the RK method has a truncation error $O(\Delta t^{p+1})$ and a convergence rate of $O(\Delta t^p)$. However, the argument ceases to be valid when computations are affected by rounding errors. In this case we have something that looks like\footnote{The order in which computations are performed matters little for the sake of our argument here.}
\begin{align}
\label{eq:linear_rounding_errs}
\hat{\bm{y}}^{n+1} = \bm{y}^n + \sum\limits_{j=1}^{p} \frac{\Delta t^j}{j!}\left(\prod_{k=1}^j \hat{A}_k\right)\bm{y}^n + \sum\limits_{j=p+1}^{s} (1+a_j) \frac{(\Delta t)^j}{j!}\left(\prod_{k=1}^j \hat{A}_k\right)\bm{y}^n + \eps_n.
\end{align}
Here $\hat{A}_k = A + \Delta A_k$ for $k=1,\dots,s$, and the $\{\Delta A_k\}_{k=1}^{s}$ terms satisfy $|\Delta A_k|\leq \gamma_{\barm} |A|$ for all $k$, and contain the rounding errors in the matrix-vector products with $A$. The term $\eps_n$ instead is of order $O(u)$ and contains the rounding errors from all other computations (vector multiplication by a scalar and additions). It is now immediately clear that this scheme is not of order $p$ anymore. In fact, it is not even convergent as the local error is $O(u+ \Delta t^{p+1})$ and therefore the global error blows up with a rate that is $O(u\Delta t^{-1} + \Delta t^p)$.
This is a classical result (see e.g.~\cite{Henrici1962,Henrici1963}), but it is often overlooked when working in double precision as $u$ is extremely small and makes the $u\Delta t^{-1}$ term negligible. If computations are performed using lower precisions (fp16, bfloat16, and possibly fp32), however, this term becomes significant and the method stops converging \cite{CrociGilesSR2020}.

A simple mixed-precision approach for RK methods is to perform all expensive matrix-vector products in low precision and all less expensive vector computations (as additions) in high precision. Under Assumption \ref{assumption:high_precision_exact}, the $\eps_n$ term in equation \eqref{eq:linear_rounding_errs} then vanishes and rounding errors stop causing the global error to grow like $O(u\Delta t^{-1})$. Nevertheless, things are still not entirely satisfactory: using the same argument as before we obtain a convergence rate of $O(u + \Delta t^p)$ since
\begin{align}
\Delta t^{-1}||\hat{\bm{y}}^{n+1}-\bm{y}(t^{n+1})||_{2} = ||\Delta A_1 \bm{y}^n ||_{2} + O(u\Delta t + \Delta t^p) = O(u + \Delta t^p).
\end{align}
It is therefore clear that standard mixed-precision RK methods are not order-preserving, i.e.~they are unable to reduce the approximation error below a threshold proportional to the machine precision of the low-precision format used.

The idea of our new mixed-precision RK methods is to instead compute the first $p$ matrix vector products exactly so that $\Delta A_k = 0$ for all $k=1,\dots,p$, and the final convergence rate is
\begin{equation}
\Delta t^{-1}||\hat{\bm{y}}^{n+1}-\bm{y}(t^{n+1})||_{2} = \left\lVert\frac{1+a_{p+1}}{(p+1)!}\Delta A_{p+1} (\Dt A)^p\bm{y}^n\right\rVert_{2} + \left\lVert\frac{a_{p+1}}{(p+1)!}\Delta t^pA^{p+1}\bm{y}_0\right\rVert_{2} + O(\Delta t^{p+1}) = O\left(u\Delta t^p + \Dt^p\right),
\end{equation}
which is the same as for the method in exact arithmetic, albeit with a slightly perturbed constant. This method is $p$-order-preserving. More generally, one might afford to only perform $q$ matrix-vector products in high precision, yielding a $q$-order-preserving method with a final convergence rate of $O(u\Delta t^q + \Delta t^p)$. In this scenario, the method will initially converge at a rate $p$ up until $\Delta t \propto u^{1/(p-q)}$, after which the order will decay to $q$. We note that depending on the problem, accurate enough solutions might be obtainable before this lower-order regime kicks in and choices of $q\ll p$ for high-order methods might be feasible.

\begin{remark}
	Given an $s$-stage order-$p$ RK method and $q\in \{1,\dots,p\}$, it is always possible to construct a $q$-order-preserving mixed-precision equivalent as
	\begin{align}
	\label{eq:q-order-pres-RK}
	\hat{\bm{y}}^{n+1} = \left(\sum\limits_{j=0}^{q} \frac{(\Delta t A)^j}{j!}\hat{\bm{y}}^n\right) + \left(\sum\limits_{j=q+1}^{s} (1+a_j) \frac{(\Delta t)^j}{j!}\left(\prod_{k=q+1}^j \hat{A}_k\right)A^q\hat{\bm{y}}^n\right) = \bm{u}_q + \bm{c}_s,
	\end{align}
	where $a_j = 0$ for $j=q+1,\dots,\min(p+1,s)$. Here $\bm{u}_q$ is the explicit method of order $q$ that matches the first $q+1$ terms in the exponential series and is computed exactly, while $\bm{c}_s$ is a stabilising $O((\Delta t A)^{q+1})$ correction term that is computed in low precision.
\end{remark}

Assuming that matrix-vector products dominate the computations, by using a $q$-order-preserving mixed-precision scheme we would reduce the cost by a factor
\begin{align}
\label{eq:cost_reduction_factor_linear}
\varrho = 1 - \dfrac{(s-q) + qr}{sr} = \dfrac{(s-q)(r-1)}{sr},
\end{align}
where $r$ is the ratio between the costs of performing a matrix-vector product in high and in low precision. For instance, if we choose $q=2$ in the classical RK4 method, and we use a combination of fp64 (double) and fp16, we have $r=4$ for a sparse matrix and $r=16$ for a dense matrix, yielding $\varrho = 37.5\%$ and $\varrho\approx47\%$. Furthermore RK4 will converge with order $4$ up until $\Delta t\propto u^{1/(p-q)}=0.022$, after which it will converge with order $2$. If we instead take $q=1$ and consider a $64$-stages RKC2 method we have $\varrho\approx 74\%$ and $\varrho\approx92\%$ for a sparse and dense matrix respectively, and that the scheme will retain its second order until $\Delta t\propto u^{1/(p-q)}=2^{-11}$. We remark that these are only rough estimates and that in practice $r$ might be larger if computations are memory-bound.

There are three complications to the idea presented in this section: 1) In most traditional RK methods we have that $s$ is not much larger than $p$ and, for instance if $s=p$, the whole scheme must be run in high precision to retain the full order. 2) The presence of nonlinearities disrupts the argument we just presented and the mixed-precision scheme must be constructed more carefully. 3) Performing some computations in low precision might disrupt the numerical stability of the method.

For ESRK, point 1) is not problematic because usually $s\gg p$.
For all other explicit methods we simply advocate that using $q < p$ might still bring some computational advantage, especially for large $\Delta t$. As far as points 2) and 3) are concerned, in the remaining of this section and in Section \ref{sec:multirate} we explain how to implement the mixed-precision schemes so as to deal with nonlinearities and we derive under which conditions these schemes are still numerically stable. However, there are some limitations:
\begin{remark}
	We are currently unable to develop efficient mixed-precision ESRK schemes based on three-term recurrence relations that are more than second-order preserving.
\end{remark}

\subsection{Mixed-precision RKC schemes for nonlinear problems}\label{sec:mpRKC}
In order to construct a mixed-precision version of method \cref{eq:RKC} that is order-preserving up to order $q=p$ we must ensure that all $p+1$ lowest-order terms are computed exactly (i.e.~in high precision). The resulting mixed-precision methods therefore vary according to the value of $p$.
In this section we consider problem \cref{eq:ode}, problem \cref{eq:mrode} is considered in \cref{sec:multirate}.

\color{myred}
Let $\hby^n$ be an approximation to $\by(t^n)$ computed with the mixed-precision scheme and $s\in\mathbb{N}$ such that \moda{$\Dt\rho\leq\beta^p(s,\eps)$}, where $\rho$ is the spectral radius of the Jacobian of $\bm{f}$ evaluated in $\hby^n$ and $\beta^p(s,\eps)$ is given in \cref{eq:defbetas}. One step of the 1-order-preserving mixed-precision RKC scheme is given by:
\begin{align}
\label{eq:first-order-scheme}
\begin{dcases}
\hbd_0 = \bm{0},\quad \hbd_1 = \mu_1\Dt \bm{f}(\hby^n),\\
\hbd_j = \nu_j \hbd_{j-1} + \kappa_j \hbd_{j-2} +\mu_j\Dt(\bm{f}(\hby^n)+\hat\Delta\bm{f}_{j-1}) +\gamma_j\Dt \bm{f}(\hby^n) ,\quad j=2,\dots,s,\\
\hby^{n+1} = \hby^n + \hbd_s,
\end{dcases}
\end{align}
where the $\{\hat\Delta\bm{f}_{j}\}_{j=1}^{s-1}$ are quantities evaluated in low precision satisfying $\hat\Delta\bm{f}_{j}\approx \Delta \bm{f}_j=\bm{f}(\hby^n+\hbd_j)-\bm{f}(\hby^n)$. Note that if in \cref{eq:first-order-scheme} we replace $\hat\Delta\bm{f}_j$ with $\Delta\bm{f}_j$ we obtain the original RKC scheme \cref{eq:RKC}. However, evaluating this difference in high precision is expensive, and for this reason we instead compute an approximation in low precision. The accuracy of this approximation together with the choice of coefficients will set the effective convergence order of the mixed-precision RKC scheme \cref{eq:first-order-scheme}. Method \cref{eq:first-order-scheme} is reminiscent of the linearized RKC method of \cite{Ver82}, where $\bm{f}(\by^n+\bd_j)$ is replaced with $\bm{f}(\by^n)+\bm{f}'(\by^n)\bd_j$. Indeed, in what follows $\hat\Delta\bm{f}_{j}\approx \bm{f}'(\by^n)\bd_j$, which is crucial to maintain stability.
\color{black}

\paragraph{\moda{First-order-preserving RKC schemes}}
\mbox{}\\
\moda{We consider here a 1-order preserving scheme ($q=1$) for the first- and second-order version of \cref{eq:first-order-scheme}, hence with coefficients given by \cref{eq:coeffRKC,eq:coeffRKC1} for $p=1$ or \cref{eq:coeffRKC,eq:coeffRKC2} for $p=2$.}
Our 1-order-preserving mixed-precision RKC method needs only one high-precision evaluation of the right-hand side $\bm{f}$, to preserve accuracy. The remaining $s-1$ evaluations are for stability and can be performed in low precision. 

The 1-order-preserving method is given by \cref{eq:first-order-scheme} with $\{\hat\Delta\bm{f}_{j}\}_{j=1}^{s-1}$ satisfying as $\Dt\rightarrow 0$,
\begin{equation}
\label{eq:requirementRKC1}
\hat\Delta\bm{f}_{j} = \Delta\bm{f}_j + O(\epsilon\Dt),\quad\forall j,
\end{equation}
where $\epsilon$ is a small positive constant. The $O(\Dt)$ accuracy of the approximation is the central ingredient that is required to obtain a 1-order-preserving method, while the small constant $\epsilon$ ensures that $\hat\Delta\bm{f}_{j}$ is close to $\Delta\bm{f}_j=O(\Dt)$, which in turn is an $O(\Dt^2)$ approximation of $\bm{f}'(\hby^n)\hbd_{j}$ and brings stability.
The challenge here is that a na\"ive low-precision evaluation of $\hat\Delta\bm{f}_j$ leads to rounding errors that in general are not $O(\Dt)$, but only $O(u)$, thus impacting the limiting accuracy of the scheme, and for this reason the $\hat\Delta\bm{f}_{j}$ terms must be carefully implemented. There are multiple ways of computing $\hat\Delta\bm{f}_{j}$ so as to satisfy \eqref{eq:requirementRKC1}, but we refer to \cref{sec:evalhDfj} for a discussion on the available options. \textcolor{myred}{The impact of \cref{eq:requirementRKC1} on the accuracy and stability of the method is studied \cref{sec:mpRKCanalysis}. We will show that the constants in $O(\epsilon\Dt)$ play an important role and must be relatively small to preserve the internal stability of the method.}

\color{myred}
Summarizing, the 1-order-preserving RKC method is given by
\begin{equation}\label{eq:1-order-preserving-scheme}
\text{Method }\cref{eq:first-order-scheme} \text{ with coefficients } \cref{eq:coeffRKC},\,\cref{eq:coeffRKC1} \text{ or } \cref{eq:coeffRKC},\,\cref{eq:coeffRKC2} \text{ and }\hat\Delta\bm{f}_j \text{ as in } \cref{eq:requirementRKC1}.
\end{equation}

\color{black}
\begin{remark}
	A na\"ive approach for designing a mixed-precision RKC scheme could be to consider scheme \cref{eq:RKC} and perform all $\bf$ evaluations in low precision except in the first stage, where high precision is employed. However, this technique would lead to stagnation. Indeed, in a Taylor expansion of $\by^{n+1}$ in \cref{eq:RKC}, with respect to $\Dt$, not only $\bf(\by^n)$ but also $\bf(\by^n+\bd_j)$, $j=1,\ldots,s-1$, appear in the first-order $O(\Dt)$ term.
\end{remark}

\begin{remark}
	\label{rem:1-order-pres-high-order-methods}
	\modb{We presented the mixed-precision RKC scheme \eqref{eq:1-order-preserving-scheme} as an example of how to construct a 1-order-preserving RKC1 or RKC2 method. The same strategy (use the delta form, replace $\bm{f}(\by^n+\bd_j)$ with $\bm{f}(\hby^n)+\hat\Delta\bm{f}_j$ and require \cref{eq:requirementRKC1}) can be straight-forwardly employed to construct the 1-order-preserving mixed-precision version of other higher-order schemes such as the ROCK methods \cite{Abd02,AbM01}.}
\end{remark}

\begin{remark}\label{rem:skrock}
	\modc{Scheme \cref{eq:1-order-preserving-scheme} with first-order coefficients \cref{eq:coeffRKC,eq:coeffRKC1} can be straightforwardly extended to stochastic differential equations (SDEs). It suffices to apply the same reasoning on the SK-ROCK scheme \cite{AAV18}, which is the natural extension of RKC1 to SDEs. Indeed, writing the SK-ROCK method in delta form yields
		\begin{equation}\label{eq:skrock}
		\begin{aligned}
		\bm{D}_0 &=\bm{0},\quad \bm{D}_1=\mu_1\Dt\bm{f}(\bm{X}_n+\nu_1 \bm{Q})+\kappa_1 \bm{Q},\\
		\bm{D}_j &= \nu_j\bm{D}_{j-1}+\kappa_j\bm{D}_{j-2}+\mu_j\Dt\bm{f}(\bm{X}_n+\bm{D}_{j-1}),\quad j=2,\ldots,s\\
		\bm{X}_{n+1} &= \bm{X}_n+\bm{D}_s,
		\end{aligned}
		\end{equation}
		where $\bm{Q}$ contains the diffusion terms. A mixed-precision version of \cref{eq:skrock} is obtained by replacing $\bm{f}(\bm{X}_n+\nu_1 \bm{Q})$ with $\bm{f}(\hat{\bm{X}}_n)+\hat\Delta\bm{f}_0$ and $\bm{f}(\bm{X}_n+\bm{D}_{j})$ with $\bm{f}(\hat{\bm{X}}_n)+\hat\Delta\bm{f}_j$. Accuracy is preserved if $\hat\Delta\bm{f}_{j} = \Delta\bm{f}_j + O(\epsilon\Dt)$ for $j=0,\ldots,s$, where $\Delta\bm{f}_0=\bm{f}(\hat{\bm{X}}_n+\nu_1 \bm{Q})-\bm{f}(\hat{\bm{X}}_n)$ and $\Delta\bm{f}_j=\bm{f}(\hat{\bm{X}}_n+\bm{D}_j)-\bm{f}(\hat{\bm{X}}_n)$ for $j=1,\ldots,s$. For the evaluation of $\hat\Delta\bm{f}_j$ the techniques of \cref{sec:evalhDfj} can be employed. Note that $\bm{Q}$ must be evaluated in high precision, however this is done only once as in the standard SK-ROCK method.
	}
\end{remark}

\paragraph{Second-order-preserving RKC2 scheme}
\mbox{}\\
\moda{Condition \cref{eq:requirementRKC1} is enough to obtain a 1-order-preserving method. In fact, method \cref{eq:first-order-scheme} under condition \cref{eq:requirementRKC1} has a local error of $O(\epsilon\Dt^2)$, which leads to an $O(\epsilon\Dt)$ global error. However, condition \cref{eq:requirementRKC1} on its own does not ensure second-order convergence. In order to obtain a 2-order-preserving scheme, we still employ method \cref{eq:first-order-scheme}, but we now require instead that, as $\Dt\rightarrow 0$,}
\begin{align}\label{eq:requirementRKC2}
\hat\Delta\bm{f}_{j} = \Delta\bm{f}_j + O(\Dt^2),\quad\forall j.
\end{align}
Again, multiple strategies for the evaluation of $\hat\Delta\bm{f}_j$ satisfying \cref{eq:requirementRKC2} are feasible here and we will describe them in \cref{sec:evalhDfj}. \color{myred}While condition \cref{eq:requirementRKC2} is enough to ensure second-order convergence, the size of the constants in \cref{eq:requirementRKC2} must also remain small in order to preserve the internal stability (stability within one step) of the method. Unfortunately, we could not derive second-order approximations \cref{eq:requirementRKC2} with small constants outside a convergence regime. For this reason, we instead only require condition \cref{eq:requirementRKC2} to be satisfied in a convergence regime, and use \cref{eq:requirementRKC1} otherwise. We thus propose the following hybrid scheme which detects convergence and switches from \cref{eq:requirementRKC1} to \cref{eq:requirementRKC2} and vice-versa as needed:
\begin{equation}\label{eq:hybrid-scheme}
\text{Method }\cref{eq:first-order-scheme} \text{ with coefficients } \cref{eq:coeffRKC},\,\cref{eq:coeffRKC2} \text{ and }\hat\Delta\bm{f}_j \text{ as in }
\begin{cases}
\cref{eq:requirementRKC2} & \quad\text{if}\quad \Vert \hbd_j-c_j\Dt\bm{f}(\hby^n)\Vert_2\leq\Vert\hbd_j\Vert_2,\\
\cref{eq:requirementRKC1} & \quad\text{otherwise.}
\end{cases}
\end{equation}
Under condition \cref{eq:requirementRKC1}, as $\Dt\to 0$ we have $\hbd_j=c_j\Dt\bm{f}(\hby^n)+O(\Dt^2)$, and hence $\Vert \hbd_j-c_j\Dt\bm{f}(\hby^n)\Vert_2\leq\Vert\hbd_j\Vert_2$, and we can safely switch to the stricter condition \cref{eq:requirementRKC2} which gives second-order convergence. For $\Dt$ large $\hbd_j$ is small due to stability, while $c_j\Dt\bm{f}(\hby^n)$ is large due to stiffness. Therefore, $\Vert \hbd_j-c_j\Dt\bm{f}(\hby^n)\Vert_2\approx \Vert c_j\Dt\bm{f}(\hby^n)\Vert_2$ and $\Vert \hbd_j-c_j\Dt\bm{f}(\hby^n)\Vert_2\leq\Vert\hbd_j\Vert_2$ is violated for large $\Dt$. Hence scheme \cref{eq:hybrid-scheme} chooses \cref{eq:requirementRKC2} only in a convergence regime and preserves internal stability by switching to \cref{eq:requirementRKC1} for large $\Dt$.
\color{black}

\begin{remark}
	\modb{With the same strategy as in \cref{rem:1-order-pres-high-order-methods} a mixed-precision 2-order-preserving version of ROCK4 can easily be obtained from \cref{eq:hybrid-scheme}.}
\end{remark}

\subsection{Evaluation of the \texorpdfstring{$\hat\Delta \bm{f}_j$}{Jacobian} terms}\label{sec:evalhDfj}
\moda{Our order-preserving RKC schemes require} the $\hat\Delta\bm{f}_j$ terms to be evaluated in reduced precision at the given accuracy \cref{eq:requirementRKC1} for $q=1$ or \cref{eq:requirementRKC2} for $q=2$. We now explain how this can be done in practice under different scenarios. For this purpose, suppose that $\bf$ has the form
\begin{equation}
\bf(\by)=A\by+\bg(\by),
\end{equation}
with $A\in\R^{n\times n}$, and $\bg:\R^n\rightarrow\R^n$ be any function with the same smoothness as $\bf$. Obviously, one can put $A=0$ and let $\bg$ absorb all linear terms. However, factoring out the linear terms helps in reducing rounding errors, and $A$ here can also be considered as the Jacobian of some other terms within $\bf$ for which the derivative is more readily available.
In what follows we write $\bf (\by)$, $\bm{g} (\by)$ and $\hbf (\by)$, $\hat{\bm{g}} (\by)$ to indicate function evaluations in high or low precision, respectively

\textbf{Scenario 1: the nonlinear term is much cheaper to evaluate than the linear term.} \moda{In this case, for \cref{eq:requirementRKC1} }it is simply possible to implement $\hat\Delta \bm{f}_j$ using mixed precision as
\begin{align}
\label{eq:evalDeltaf_mixed}
\hat\Delta \bm{f}_j = \hat{A}\hbd_{j} + \bm{g}(\hby^n + \hbd_{j}) - \bm{g}(\hby^n) = \Delta\bm{f}_j + O(u\Dt),
\end{align}
since $\hbd_{j}=O(\Dt)$ and the low-precision multiplication by $\hat{A}$ yields an $O(u\Dt)$ error (i.e.~$\varepsilon=u$). This strategy requires applying $A$ in high precision only to $\hby^n$ at the first stage of \cref{eq:first-order-scheme}. If possible, these high-precision matrix-vector products (matvecs) could even be performed matrix-free to avoid storing $A$ in high precision. \moda{When \cref{eq:requirementRKC2} is required we} instead compute $\hat\Delta \bm{f}_j$ as
\begin{align}\label{eq:evalDeltaf_mixed2}
\hat\Delta \bm{f}_j = \hat{A}\hbv_j + c_j\Dt A\bm{f}(\hby^n) + \bm{g}(\hby^n + \hbd_j) - \bm{g}(\hby^n) = \Delta\bm{f}_j + O(u\Dt^2),
\end{align}
\moda{where $\hbv_j=\hbd_j-c_j\Dt\bm{f}(\hby^n)$. The second equality follows from $\hbv_j=O(\Dt^2)$ (see \cref{sec:RKCmethods} or \cref{thm:convmpRKCbis}), hence multiplication by $\hat{A}$ yields an $O(u\Dt^2)$ error. This strategy only requires evaluating $\bm{f}(\hby^n)$ and $A\bm{f}(\hby^n)$ once in high precision every $s$ stages.}

\color{myred}
\begin{remark}
	While a more rigorous analysis is provided in \cref{sec:internalstab}, we now briefly comment on the behaviour of the errors in \cref{eq:evalDeltaf_mixed,eq:evalDeltaf_mixed2} outside a convergence regime for large $\Dt$. For simplicity, we assume $\bm{g}\equiv 0$ and that $\hbd_j=\bd_j=(R_j(\Dt A)-I)\hby^n$, giving $\Vert\hbd_j\Vert_2\leq 2\Vert\hby^n\Vert_2$ (the exact scheme is stable, hence $\Vert R_j(\Dt A)\Vert_2 \leq 1$). 
	Using \cref{lemma:boundDA} and $\hat{A} = A + \Delta A$, the rounding errors in \cref{eq:evalDeltaf_mixed} are bounded by
	\begin{equation}\label{eq:errorO1}
	||\Delta A\hbd_j||_2 \leq \barc\barm^2u||A||_2\Vert\hbd_j\Vert_2\leq 2\barc\barm^2u||A||_2 \Vert\hby^n\Vert_2.
	\end{equation}
	In \cref{eq:evalDeltaf_mixed2} we have $\Vert\hbv_j\Vert_2\leq\Vert\hbd_j\Vert_2+c_j\Dt\Vert A\Vert_2\Vert\hby^n\Vert_2\leq (2+c_j\Dt\Vert A\Vert_2)\Vert\hby^n\Vert_2$ and the rounding errors are instead bounded by
	\begin{equation}\label{eq:errorO2}
	||\Delta A\hbv_j||_2 \leq \barc\barm^2u||A||_2\Vert\hbv_j\Vert_2\leq \barc\barm^2u||A||_2(2+c_j\Dt\Vert A\Vert_2)\Vert\hby^n\Vert_2,
	\end{equation}
	which for large $\Dt$ is much larger than \cref{eq:errorO1} due to the term $\Dt\Vert A\Vert_2=\Dt\rho$ which is approximately $\beta^p(s,\eps)=O(s^2)$. Hence, outside of a convergence regime the error in \cref{eq:evalDeltaf_mixed2} might\footnote{\color{myred}While we are comparing upper bounds rather than lower bounds, the upper bounds are close to what we observe in practice. Note that in rounding error analysis the lower bound for the rounding error is always zero, corresponding to the possible, yet unrealistic scenario in which all computations are performed exactly.} be larger than in \cref{eq:evalDeltaf_mixed} and affect stability. To avoid stability issues but still preserve second-order the hybrid scheme \cref{eq:hybrid-scheme} should be employed. Indeed, in \cref{eq:hybrid-scheme} approximation \cref{eq:evalDeltaf_mixed2} is employed only when $\Vert\hbv_j\Vert_2$ is smaller than $\Vert\hbd_j\Vert_2$.
\end{remark}
\color{black}

\textbf{Scenario 2: both linear and nonlinear terms are expensive to evaluate.} In this case we need to implement the whole $\hat\Delta \bm{f}_j$ in low precision while still ensuring the right order of accuracy and stability. For this purpose, we employ Jacobian approximations. \moda{In \cref{eq:requirementRKC1} we have that}
\begin{align}
\Delta\bm{f}_j = \bm{f}'(\hby^n)\hbd_j + O(\Dt^2),
\end{align}
and therefore we can implement $\hat\Delta\bm{f}_j$ by approximating the action of the Jacobian $\bm{f}'(\hby^n)$ against $\hbd_j$ in low precision, since $\hbd_j=O(\Dt)$ and this leads to an $O(u\Dt)$ rounding error (again $\varepsilon=u$). If an analytic expression for the directional derivative of $\bm{f}$ is available, then the easiest option is to just evaluate the action of the derivative in low precision. When the Jacobian is not known analytically, there are various techniques available to compute the action of a Jacobian against a vector efficiently. However, we do not describe these techniques in detail here, and we only mention two. The first is automatic differentiation \cite{griewank2008evaluating}, through which we can compute the action of $\bm{f}'$ at up to roughly the same cost of a couple of evaluations of $\bm{f}$ itself (see Chapter 4 in \cite{griewank2008evaluating}).The second simply entails computing
\begin{align}
\label{eq:defDf2}
\hat\Delta\bm{f}_j \coloneqq \hat A \hbd_j+ \delta^{-1}\left(\hat{\bg}\left(\hby^n+\delta\,\hbd_j\right)-\bg\left(\hby^n\right)\right) ,\qquad\delta = \frac{\sqrt{u}}{\Dt},
\end{align}
which yields
\begin{align}
\hat\Delta\bm{f}_j =\hat A \hbd_j+\bm{g}'(\hby^n)\hbd_j+O(\sqrt{u}\Dt)
=\Delta \bm{f}_j+O(\sqrt{u}\Dt+\Dt^2).
\end{align}
Computation of $\hat\Delta\bm{f}_j$ only requires low-precision evaluations of $\bg$, as $\bg(\hby^n)$ is already known from $\bm{f}(\hby^n)$, which is needed during the first stage of \cref{eq:first-order-scheme}. Estimate \cref{eq:defDf2} yields an accurate enough approximation with $\varepsilon=\sqrt{u}$ and is proved in \cref{lemma:jacapp}. The introduction of the coefficient $\delta$ is crucial to guarantee a good Jacobian approximation. Indeed, the roundoff introduced by $\hat \bg$ is $O(u)$ and a multiplication by $\delta^{-1}$ ensures that $\hat\Delta\bm{f}_j=\bm{f}'(\hby^n)\hbd_j+O(\sqrt{u}\Dt)$. We refer the reader to \cref{lemma:jacapp} for details.

\color{myred} 
In order to satisfy the second-order condition \cref{eq:requirementRKC2} we use $\hbv_j=\hbd_j-c_j\Delta t\bm{f}(\hby^n)$ and we rewrite $\hat\Delta\bm{f}_j$ as
\begin{align}
\hat\Delta\bm{f}_j = \hat A \hbv_j+c_j\Delta t A\bm{f}(\hby^n)+\hat\Delta_1\bm{g}_j + \hat\Delta_2\bm{g}_j,
\end{align}
with $\Delta_1\bm{g}_j$ and $\hat\Delta_2\bm{g}_j$ required to satisfy
\begin{align}
\hat\Delta_1\bm{g}_j =\bm{g}(\hby^n + \hbd_j) - \bm{g}(\hby^n+c_j\Delta t f(\hby^n))+ O(\Dt^2),\quad\quad \hat\Delta_2\bm{g}_j =  \bm{g}(\hby^n+c_j\Delta t f(\hby^n))- \bm{g}(\hby^n) + O(\Dt^2),
\end{align}
and thus ensuring that $\hat\Delta_1\bm{g}_j + \hat\Delta_2\bm{g}_j =  \bm{g}(\hby^n + \hbd_j) - \bm{g}(\hby^n)+O(\Delta t^2)$.
Here, $\hat\Delta_1\bm{g}_{j}$ and $\hat\Delta_2\bm{g}_{j}$ can again be obtained via Jacobian approximation/evaluation:
\begin{align}\label{eq:condDeltag}
\hat\Delta_1\bm{g}_{j} = \hat{\bm{g}}'(\hby^n + c_j\Dt\bm{f}(\hby^n))\hbv_j,\qquad
\hat\Delta_2\bm{g}_{j} = c_j\Dt\bm{g}'(\hby^n)\bm{f}(\hby^n),
\end{align}
for which we can use the same techniques mentioned before. For instance, $\hat\Delta_1\bm{g}_{j}$ can be computed analogously to \cref{eq:defDf2}:
\begin{align}\label{eq:exDeltag1}
\hat\Delta_1\bm{g}_{j}\coloneqq \delta^{-1}\left( \hat{\bm{g}}(\hby^n+c_j\Delta t\bm{f}(\hby^n)+\delta\hbv^j)-\hat{\bm{g}}(\hby^n+c_j\Delta t\bm{f}(\hby^n))\right),\qquad \delta = \frac{\sqrt{u}}{\Dt^2}.
\end{align}
and thus
\begin{align}
\hat\Delta_1\bm{g}_{j}= \bm{g}'(\hby^n+c_j\Delta t\bm{f}(\hby^n))\hbv^j+O(\sqrt{u}\Delta t^2)
=\bm{g}(\hby^n + \hbd_j) - \bm{g}(\hby^n+c_j\Delta t f(\hby^n))+ O(\sqrt{u}\Dt^2+\Delta t^4).
\end{align}
Note that the hat in the Jacobian appearing in the expression for $\hat\Delta_1\bm{g}_{j}$ in \cref{eq:condDeltag} indicates that the Jacobian approximation can be performed in low precision as in \cref{eq:exDeltag1}. In contrast, the expression for $\hat\Delta_2\bm{g}_{j}$ requires the Jacobian to be evaluated in high precision. This different choice is crucial to ensure that the overall error in the approximation is $O(\Dt^2)$. We remark that for the $\hat\Delta_2\bm{g}_{j}$ term a single high-precision evaluation of $\bm{g}'(\hby^n)\bm{f}(\hby^n)$ every $s$ stages is sufficient since the only thing that varies with $j$ is $c_j$.
\color{black}
\\

\textbf{Other scenarios.}
\begin{itemize}
	\item If the linear term is cheaper to evaluate than the nonlinear term the solution is to simply apply the strategy for Scenario 2 and evaluate the matrix-vector products in high precision.
	\item In some cases it is possible to implement differences like $\bm{g}(\bm{y} + \bm{b})-\bm{g}(\bm{y})$ in such a way that the rounding errors are automatically of the right order of accuracy. An example scenario is when there is an analytical expression for the difference of the right order.
\end{itemize}

\begin{example}
	Take the nonlinear convective term of the Navier--Stokes equations, $\bm{g}(\bm{y})=\bm{y}\nabla\bm{y}$. We then have that
	\begin{align}
	\label{eq:_exNS_1}
	\bm{g}(\bm{y} + \bm{b}) - \bm{g}(\bm{y}) = \bm{b}\nabla \bm{y} + \bm{y}\nabla\bm{b} + \bm{b}\nabla\bm{b}.
	\end{align}
	For the first-order methods we then take $\bm{y}=\hby_n$, $\bm{b}=\hbd_j=O(\Dt)$, and \eqref{eq:_exNS_1} evaluated in low precision yields an $O(u\Dt)$ error. For the second-order methods we instead take $\bm{y}=\hby_n + c_j\Dt\bm{f}(\hby^n)$ and $\bm{b}=\hbv_j=O(\Dt^2)$ for the $\hat\Delta_1\bm{f}_j$ term, yielding an $O(u\Dt^2)$ rounding error if \eqref{eq:_exNS_1} is evaluated in low precision. For the $\hat\Delta_2\bm{f}_j$ term we instead set $\bm{y}=\hby^n$ and $\bm{b}=c_j\Dt\bm{f}(\hby^n)$ yielding $c_j\Dt\left(\bm{f}(\hby^n)\nabla \hby^n + \hby^n\nabla\bm{f}(\hby^n)\right) + c_j^2\Dt^2\bm{f}(\hby^n)\nabla\bm{f}(\hby^n)$, where each term is constant across the stages (except for the scalings by $c_j$ and $c_j^2$) and they can be pre-computed once every RKC time step in high precision (i.e.~no error). In practice, the $\bm{b}\nabla\bm{b}$ term in \eqref{eq:_exNS_1} is of higher order and can possibly be dropped to save on computations. Note that by dropping higher-order terms we recover the directional derivative of $\bm{g}$. \modb{We remark that the Navier--Stokes equations are index-2 differential algebraic equations, and, as such, they can be solved by simply replacing the RKC schemes of the projection methods from \cite{Ros14,ZhP06} with their order-preserving mixed-precision counterparts presented here.}
\end{example}

\subsection{Cost analysis}\label{sec:costanalysis}
Before analysing the convergence and stability properties of algorithm \cref{eq:first-order-scheme} we first derive an expression for the computational savings resulting from our mixed-precision methods with respect to a method fully implemented in high precision. We assume that vector operations are negligible\footnote{Note that method \cref{eq:first-order-scheme} can be implemented with the same number of vector operations as their high-precision equivalents.}, and we define $r$ to be the ratio between the cost of evaluating $\bm{f}$ in high precision and the cost of computing $\hat\Delta\bm{f}_j$ with one of the strategies we just presented (the latter not including the cost of the quantities computed once every $s$-stages). We then have a cost reduction factor $\varrho$ of
\begin{align}
\label{eq:cost_reduction_factor_nonlinear}
\varrho = 1 - \dfrac{(s-q) + qr}{sr} = \dfrac{(s-q)(r-1)}{sr},\quad q \in \{1,2\}.
\\
\varrho \longrightarrow 1-\frac{1}{r},\quad\text{as}\quad s\rightarrow\infty,\qquad\text{and}\qquad \varrho \longrightarrow 1 - \frac{q}{s},\quad\text{as}\quad r\rightarrow\infty.
\label{eq:varrho_limits}
\end{align}
Note that this expression for $\varrho$ is essentially the same as in \eqref{eq:cost_reduction_factor_linear}. By looking at the limit cases for $s,r\rightarrow \infty$, we see that the best cost reduction factor we can hope for when $s$ is large (typical in ESRK methods) is $1-r^{-1}$, which for $r$ also large becomes very close to $1$. The actual value of $r$ grows as the number of bits of the low-precision format chosen decreases and is problem-dependent.

We can present a couple of examples for Scenario 2 under some simplifying assumptions: 1) We only look at flop counts and we ignore savings related to memory efficiency. 2) One flop in a format using twice or four times the number of bits costs twice or four times as much. 3) The cost of evaluating $\hat\Delta\bm{f}_j$ is roughly the same as that of evaluating $\hbf$ in the same precision\footnote{This assumption holds for evaluations in the style of \eqref{eq:defDf2}. When Jacobian matrix-vector products are instead computed via a forward pass of automatic differentiation, computing $\hat\Delta\bm{f}_j$ costs up to $2.5$ times as $\hbf$, even though in practice it might be cheaper, cf.~Chapter 4 in \cite{griewank2008evaluating}.}.
For instance, for sparse $A$ and linear-cost evaluations of the nonlinear term (e.g.~this is the case for the heat equation with a nonlinear reaction term acting entrywise on the solution) we obtain $r=2$ for double-single or single-half combinations, and $r=4$ for double-half, reducing the overall cost by half or a factor of $4$ respectively. For dense $A$ and/or quadratic-cost evaluations of $\bm{g}$ we instead get up to $r=4$ (double-single or single-half) and $r=16$ (double-half) leading to much greater savings. 

In memory-bound computations $r$ might actually be larger since most function evaluations are performed in a low-precision format, which might allow for better cache exploitation. We remark that our $q$-order-preserving schemes require the storage of $q$ additional vectors. However, they might also allow to avoid storing some of the data needed to evaluate $\bm{f}$ in high precision since high-precision evaluations of $\bm{f}$ occur less often. For instance, the matrix $A$ could be implemented matrix-free in high precision and only explicitly stored in low precision.

\subsection{Convergence and stability analysis}\label{sec:mpRKCanalysis}
In this section, we present the accuracy and stability analysis for the \moda{mixed-precision schemes \cref{eq:1-order-preserving-scheme} and \cref{eq:hybrid-scheme}} introduced in \cref{sec:mpRKC}. 
We start by introducing \cref{lemma:perturbations,lemma:coroll_perturbations} below which collects some results that are crucial for the modeling of stage perturbations as truncation or rounding errors. \moda{Then, in \cref{thm:convmpRKCbis}, we show that conditions \cref{eq:requirementRKC1,eq:requirementRKC2} are indeed sufficient to obtain 1- and 2-order preserving schemes. In \cref{thm:accuracympRKC,thm:accuracympRKC2orderpres} we study error propagation within each timestep under \cref{ass:ddf} on the internal stability of the methods. In particular we show the benefits of $\hat\Delta \bm{f}_j$ approximating the Jacobian. Finally, in \cref{thm:stabmpRKC} we study the internal stability of the methods and hence the validity of \cref{ass:ddf}.}

Henceforth, $U_j(x)$ is the Chebyshev polynomial of the second kind of degree $j$, defined recursively by
\begin{equation}
U_0(x)=1,\qquad U_1(x)=2x,\qquad U_j(x)=2xU_{j-1}(x)-U_{j-2}(x),\quad j\geq 2.
\end{equation}

\color{myred} 
\begin{lemma}\label{lemma:perturbations}
	Let $p=1,2$, $A\in\R^{n\times n}$ and $\mu_j,\nu_j,\kappa_j$ be as in \cref{eq:coeffRKC}, with $\oz,\ou,b_j$ as in \cref{eq:coeffRKC1} if $p=1$ and as in \cref{eq:coeffRKC2} if $p=2$. Let $\bm{r}_j\in\R^n$, $j=1,\ldots,s$, and
	\begin{equation}\label{eq:rk_perturbations}
	\bd_0 = \bm{0}, \qquad \bd_1=\bm{r}_1,\qquad
	\bd_j = \nu_j\bd_{j-1}+\kappa_j\bd_{j-2}+\mu_j\, \Dt A\bd_{j-1}+\bm{r}_j,\quad j=2,\ldots,s.
	\end{equation}
	Then:
	\begin{enumerate}[label=\roman*)]
		\item\label{item:perturbations} Let $I\in\R^{n\times n}$ be the identity matrix. It holds
		\begin{equation}\label{eq:sum_perturbations}
		\bd_k=\sum_{j=1}^k\frac{b_k}{b_j}U_{k-j}(\oz I+\ou \Dt A)\bm{r}_j,\qquad k=1,\ldots,s.
		\end{equation}
		\item\label{item:identity} Recall that $R_k(z)=a_k+b_k T_k(\oz+\ou z)$ is the internal stability polynomial of the RKC scheme. It holds
		\begin{align}\label{eq:defBarR}
		\overline{R}_k(z) &\coloneqq \frac{R_k(z)-1}{z} =\sum_{j=1}^k\frac{b_k}{b_j}U_{k-j}(\oz+\ou z)(\mu_j+\gamma_j), \\  \label{eq:defBarBarR}
		\widetilde{R}_k(z)&\coloneqq \frac{R_k(z)-1-c_k z}{z^2}=\sum_{j=2}^k\frac{b_k}{b_j}U_{k-j}(\oz+\ou z)\mu_j c_{j-1},
		\end{align}
		with $\gamma_1=0$. Note that $R_k(z)=1+c_k z+O(z^2)$ \cite{VerwerHundsdorfer1990RKC} and thus $\overline{R}_k(z)$, $\widetilde{R}_k(z)$ are polynomials as well.
		\item\label{item:eqr} Let $\bm{r}\in\R^n$ and $\bm{r}_j=(\mu_j+\gamma_j)\bm{r}$ for $j=1,\ldots,s$, with $\gamma_1=0$. Then $\bd_k=\overline{R}_k(\Delta t A)\bm{r}$.
	\end{enumerate}
\end{lemma}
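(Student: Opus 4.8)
The plan is to prove \ref{item:perturbations} by a direct induction and then obtain \ref{item:identity} and \ref{item:eqr} by specializing it to the scalar Dahlquist test equation. For \ref{item:perturbations}, I would first absorb the coefficients: since $\nu_j = 2\oz b_j/b_{j-1}$ and $\mu_j = 2\ou b_j/b_{j-1}$ (from \cref{eq:coeffRKC}, which holds for both $p=1$ and $p=2$), the term $\nu_j\bd_{j-1} + \mu_j\,\Dt A\bd_{j-1}$ equals $\frac{2b_j}{b_{j-1}}X\bd_{j-1}$ with $X := \oz I + \ou\Dt A$, so \cref{eq:rk_perturbations} becomes $\bd_j = \frac{2b_j}{b_{j-1}}X\bd_{j-1} - \frac{b_j}{b_{j-2}}\bd_{j-2} + \bm{r}_j$. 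I would then prove \cref{eq:sum_perturbations} by strong induction on $k$: the cases $k=1,2$ are checked directly, and in the inductive step one inserts the formulas for $\bd_{k-1}$ and $\bd_{k-2}$ and collects the coefficient of each $\frac{b_k}{b_j}\bm{r}_j$. For $1\le j\le k-2$ this is $2X\,U_{k-1-j}(X) - U_{k-2-j}(X) = U_{k-j}(X)$ by the second-kind Chebyshev recurrence; for $j=k-1$ it is $2X\,U_0(X) = U_1(X)$; and for $j=k$ it is simply $U_0(X) = I$. This yields \cref{eq:sum_perturbations}, and since no coefficient information beyond \cref{eq:coeffRKC} enters, the argument is identical for RKC1 and RKC2.

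For \ref{item:identity} I would apply \ref{item:perturbations} to $y' = \lambda y$ with $y^n = 1$ and $z = \lambda\Dt$. One step of \cref{eq:RKC} then becomes $\bd_j = \nu_j\bd_{j-1} + \kappa_j\bd_{j-2} + \mu_j z\,\bd_{j-1} + (\mu_j+\gamma_j)z$ with $\bd_1 = \mu_1 z$, i.e.\ exactly \cref{eq:rk_perturbations} with the scalar $A=\lambda$ and $\bm{r}_j = (\mu_j+\gamma_j)z$ for all $j$ (using $\gamma_1=0$). Since $1+\bd_k = R_k(z)$ \cite{VerwerHundsdorfer1990RKC}, \cref{eq:sum_perturbations} gives $R_k(z) - 1 = z\sum_{j=1}^k \frac{b_k}{b_j}U_{k-j}(\oz+\ou z)(\mu_j+\gamma_j)$, which is \cref{eq:defBarR} after dividing by $z$. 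For \cref{eq:defBarBarR} I would write $\bd_j = c_j z + e_j$ with $c_j$ as in \cref{eq:defc}; because $\bd_j(0) = R_j(0)-1 = 0$ and $\bd_j'(0) = c_j$ (from $R_j(z) = 1 + c_j z + O(z^2)$), the remainder satisfies $e_j = O(z^2)$. Substituting $\bd_j = c_j z + e_j$ into the recursion, the $z^1$-contributions cancel exactly by \cref{eq:defc}, leaving $e_j = \nu_j e_{j-1} + \kappa_j e_{j-2} + \mu_j z\,e_{j-1} + \mu_j c_{j-1}z^2$ with $e_0 = e_1 = 0$ --- again of the form \cref{eq:rk_perturbations}, now with $\bm{r}_j = \mu_j c_{j-1}z^2$. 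Applying \ref{item:perturbations} once more (the $j=1$ term drops out since $c_0=0$) and dividing by $z^2$ produces $\widetilde{R}_k(z) = e_k/z^2 = \sum_{j=2}^k \frac{b_k}{b_j}U_{k-j}(\oz+\ou z)\mu_j c_{j-1}$, i.e.\ \cref{eq:defBarBarR}; the right-hand sides of \cref{eq:defBarR,eq:defBarBarR} being polynomials also confirms $\overline{R}_k,\widetilde{R}_k$ are polynomials. This reuse of \ref{item:perturbations} for the quadratic remainder $e_j$ is the one non-routine observation, and identifying that $e_j$ obeys a recursion of exactly this shape with forcing $\mu_j c_{j-1}z^2$ is the main obstacle; the rest is the Chebyshev three-term recurrence plus careful index bookkeeping.

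Finally, for \ref{item:eqr}, I would substitute $\bm{r}_j = (\mu_j+\gamma_j)\bm{r}$ into \cref{eq:sum_perturbations} to get $\bd_k = \Big(\sum_{j=1}^k \frac{b_k}{b_j}U_{k-j}(\oz I + \ou\Dt A)(\mu_j+\gamma_j)\Big)\bm{r}$ and recognize the bracketed matrix as $\overline{R}_k(\Dt A)$ by evaluating the polynomial identity \cref{eq:defBarR} at $z = \Dt A$, giving $\bd_k = \overline{R}_k(\Dt A)\bm{r}$.
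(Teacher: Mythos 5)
Your proposal is correct and follows essentially the same route as the paper: the paper likewise obtains \cref{eq:defBarR} by applying \ref{item:perturbations} to the linear RKC recursion with forcing $(\mu_j+\gamma_j)\Dt A\by^n$, obtains \cref{eq:defBarBarR} by the same shift $\bv_j=\bd_j-c_j\Dt A\by^n$ (your $e_j$) whose forcing becomes $\mu_jc_{j-1}(\Dt A)^2\by^n$ via \cref{eq:defc}, and gets \ref{item:eqr} by combining \ref{item:perturbations} with \cref{eq:defBarR}. The only differences are cosmetic: you prove \ref{item:perturbations} directly by induction on the three-term Chebyshev recurrence, where the paper simply cites \cite{VerwerHundsdorfer1990RKC}, and you work with the scalar Dahlquist variable $z$ instead of $\Dt A$, which is equally valid since \cref{eq:defBarR,eq:defBarBarR} are polynomial identities.
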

\begin{proof}
	Point i) has been proved in \cite{VerwerHundsdorfer1990RKC} and point iii) follows by combining i) and \cref{eq:defBarR}, so we only need to prove ii). To prove \cref{eq:defBarR} we note that
	\begin{equation}\label{eq:rkc_lin_eq}
	\bd_0 = \bm{0}, \qquad \bd_1=\mu_1 \Dt A\by^n,\qquad
	\bd_j = \nu_j\bd_{j-1}+\kappa_j\bd_{j-2}+\mu_j\, \Dt A(\by^n+\bd_{j-1})+\gamma_j\Dt A\by^n,\quad j=2,\ldots,s,
	\end{equation}
	is one step of \cref{eq:RKC} with linear $\bm{f}(\by)=A\by$, hence
	\begin{equation}\label{eq:byplusdk}
	\by^n+\bd_k=R_k(\Dt A)\by^n.
	\end{equation}
	However, applying \cref{eq:sum_perturbations} to \cref{eq:rkc_lin_eq} with $\bm{r}_j=(\mu_j+\gamma_j)\Delta t A \by^n$ yields
	\begin{equation}\label{eq:dk_sumlin}
	\bd_k=\sum_{j=1}^k\frac{b_k}{b_j}U_{k-j}(\oz I+\ou \Dt A)(\mu_j+\gamma_j)\Delta t A\by^n,\qquad k=1,\ldots,s.
	\end{equation}
	Pulling \cref{eq:byplusdk,eq:dk_sumlin} together we obtain \cref{eq:defBarR} with $z=\Dt A$. To prove \cref{eq:defBarBarR} we first subtract $c_j\Delta t A\by^n$ from \cref{eq:rkc_lin_eq}. Then, by using \cref{eq:defc} and by setting $\bv_j=\bd_j-c_j\Delta t A \by^n$, we obtain
	\begin{equation}\label{eq:rkc_lin_eq_v}
	\bv_0 = \bm{0}, \qquad \bv_1=\bm{0},\qquad
	\bv_j = \nu_j\bv_{j-1}+\kappa_j\bv_{j-2}+\mu_j\, \Dt A(c_{j-1}\Delta t A\by^n+\bv_{j-1}),\quad j=2,\ldots,s,
	\end{equation}
	and we can also rewrite \cref{eq:byplusdk} as
	\begin{equation}\label{eq:byplusvk}
	\by^n+c_k\Delta t A \by^n+\bv_k=R_k(\Dt A)\by^n.
	\end{equation}
	Applying \cref{eq:sum_perturbations} to \cref{eq:rkc_lin_eq_v}, (with $\bv_j$ instead of $\bd_j$ and $\bm{r}_j=\mu_j c_{j-1}(\Delta t A)^2\by^n$) gives
	\begin{equation}\label{eq:vk_sumlin}
	\bv_k=\sum_{j=2}^k\frac{b_k}{b_j}U_{k-j}(\oz I+\ou \Dt A)\mu_jc_{j-1}(\Delta t A)^2\by^n,\qquad k=1,\ldots,s.
	\end{equation}
	Combining \cref{eq:byplusvk,eq:vk_sumlin} we obtain \cref{eq:defBarBarR}. 
\end{proof}
\begin{lemma}\label{lemma:coroll_perturbations}
	Under the assumptions of \cref{lemma:perturbations} we suppose that $A\in\R^{n\times n}$ is a symmetric nonpositive definite matrix and $s$ is such that $\Dt\rho\leq \beta^p(s,\eps)$, where $\rho$ is the spectral radius of $A$ and $\beta^p(s,\eps)$ is as in \cref{eq:defbetas}. Let ${\bm{r}}_j\in\R^n$, $j=1,\ldots,s$, and
	\begin{align}
	\bd_k &= \sum_{j=1}^k\frac{b_k}{b_j}U_{k-j}(\oz I+\ou\Dt A)(\mu_j+\gamma_j)\bm{r}_j, &
	\tilde{\bd}_k &= \sum_{j=1}^k\frac{b_k}{b_j}U_{k-j}(\oz I+\ou\Dt A)\mu_j\bm{r}_j.
	\end{align}
	Then:
	\begin{enumerate}[label=\roman*)]
		\item\label{item:boundnorm} $\Vert \bd_k\Vert_2\leq c_k \max_{j=1,\ldots,k}\Vert \bm{r}_j\Vert_2$ with $c_k$ as in \cref{eq:defc}. Note that $c_k\leq 1$ \cite{VerwerHundsdorfer1990RKC}.
		\item\label{item:boundnormbis} $\Vert \tilde \bd_k\Vert_2\leq C_k \max_{j=1,\ldots,k}\Vert \bm{r}_j\Vert_2$ with $C_k=c_k$ for $p=1$ and $C_k$ a small constant for $p=2$.
	\end{enumerate}
\end{lemma}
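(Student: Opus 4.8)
The plan is to reduce both inequalities to a scalar estimate by exploiting the symmetry of $A$. Since $A=A^\top\preceq 0$, for every real polynomial $\phi$ one has $\Vert\phi(\oz I+\ou\Dt A)\Vert_2=\max_{\lambda\in\sigma(A)}|\phi(\oz+\ou\Dt\lambda)|$, with $\sigma(A)\subset[-\rho,0]$. The stability hypothesis $\Dt\rho\leq\beta^p(s,\eps)\leq\ell_s^\eps=2\oz/\ou$ (recall $\ou>0$ and $\oz\geq 1$) forces $\oz+\ou\Dt\lambda\in[-\oz,\oz]$ for every $\lambda\in\sigma(A)$, hence $\Vert\phi(\oz I+\ou\Dt A)\Vert_2\leq\max_{x\in[-\oz,\oz]}|\phi(x)|$ for all such $\phi$.

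First I would record the elementary fact that $\max_{x\in[-\oz,\oz]}|U_n(x)|=U_n(\oz)$ for every $n\geq 0$ and $\oz\geq 1$: the relation $U_n(-x)=(-1)^nU_n(x)$ reduces matters to $[0,\oz]$; on $[0,1]$, writing $x=\cos\theta$ gives $|U_n(x)|=|\sin((n+1)\theta)|/\sin\theta\leq n+1=U_n(1)$, and on $[1,\oz]$ the polynomial $U_n$ is positive and increasing because all its zeros lie in $(-1,1)$, so (by Rolle) do those of $U_n'$, while the leading coefficient is positive. Thus $\Vert U_{k-j}(\oz I+\ou\Dt A)\Vert_2\leq U_{k-j}(\oz)$ for $j\leq k$, and the triangle inequality applied to the sum defining $\bd_k$ gives
\[ \Vert\bd_k\Vert_2\leq\Big(\sum_{j=1}^k\frac{b_k}{b_j}\,|\mu_j+\gamma_j|\,U_{k-j}(\oz)\Big)\max_{1\leq j\leq k}\Vert\bm{r}_j\Vert_2. \]
To close i) I would check that every factor here is nonnegative: $\oz\geq 1$ yields $T_j(\oz),T_j'(\oz),T_j''(\oz)>0$, hence $b_j>0$ for both $p=1$ and $p=2$; $\ou>0$; for $p=1$ we have $\gamma_j=0$ and $\mu_j>0$, while for $p=2$, $\mu_j+\gamma_j=\mu_j(1-a_{j-1})=2\ou b_jT_{j-1}(\oz)>0$. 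Therefore $|\mu_j+\gamma_j|=\mu_j+\gamma_j$, and the bracket equals $\sum_{j=1}^k\frac{b_k}{b_j}(\mu_j+\gamma_j)U_{k-j}(\oz)=\overline{R}_k(0)$ by \cref{eq:defBarR}; since $R_k(z)=1+c_kz+O(z^2)$ this is exactly $c_k$. The bound $c_k\leq 1$ is that of \cite{VerwerHundsdorfer1990RKC}.

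Part ii) is the same computation with $\mu_j+\gamma_j$ replaced by $\mu_j$, giving $\Vert\tilde\bd_k\Vert_2\leq C_k\max_j\Vert\bm{r}_j\Vert_2$ with $C_k=\sum_{j=1}^k\frac{b_k}{b_j}\mu_jU_{k-j}(\oz)$. For $p=1$, $\mu_j=\mu_j+\gamma_j$, so $C_k=c_k\leq 1$. For $p=2$ I would use $\mu_j=(\mu_j+\gamma_j)/(1-a_{j-1})$ to get $C_k\leq\big(\min_{0\leq i\leq s-1}(1-a_i)\big)^{-1}c_k$, and it then remains to bound $1-a_i$ from below by a positive absolute constant, uniformly in $i$ and $s$, using the explicit form $1-a_i=b_iT_i(\oz)$, the classical asymptotics of $T_i,T_i',T_i''$ at $\oz=1+\eps/s^2$ (which give $a_i\to 2/3$), together with a direct check of the first few indices. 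This uniform lower bound on $1-a_i$ is the only delicate point; granting it, $C_k$ is a small absolute constant and everything else is routine manipulation built on the scalar estimate of the first paragraph.
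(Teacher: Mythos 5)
Your proof is correct, and part \ref{item:boundnorm} follows the paper's own argument almost verbatim: spectral reduction for symmetric nonpositive definite $A$, the bound $\Vert U_{k-j}(\oz I+\ou\Dt A)\Vert_2\leq U_{k-j}(\oz)$, nonnegativity of $b_j$ and $\mu_j+\gamma_j$, and identification of the resulting sum with $\overline{R}_k(0)=c_k$ via \cref{eq:defBarR} (you are in fact more explicit than the paper, which simply asserts the maximum property of $|U_n|$ on $[-\oz,\oz]$ and the sign of $\mu_j+\gamma_j$). Where you genuinely diverge is part \ref{item:boundnormbis} for $p=2$: you bound $C_k=\sum_{j\le k}\tfrac{b_k}{b_j}\mu_j U_{k-j}(\oz)$ by writing $\mu_j=(\mu_j+\gamma_j)/(1-a_{j-1})$ and reducing everything to a uniform positive lower bound on $1-a_i=b_iT_i(\oz)$, which gives $C_k\leq c_k/\min_i(1-a_i)$, a uniform constant of roughly $4$ (since $1-a_i\gtrsim 1/4$). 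The paper instead estimates the sum directly, using $b_k/b_j<4/3$, $U_{k-j}(\oz)\approx k-j+1$ and $\mu_j\approx 6/(s^2-1)$ (exact without damping), obtaining $C_k\approx 4k(k+1)/(s^2-1)\leq 8$; this retains the dependence on $k$ and $s$ (so $C_k$ is in fact small when $k\ll s$ and decreasing in $s$), whereas your bound is uniform but cruder in that respect — for the lemma as stated either suffices. Two small caveats in your write-up: the uniform lower bound on $1-a_i$ is the one piece you only sketch (it is standard, via $T_i''(\oz)T_i(\oz)/T_i'(\oz)^2\geq (i^2-1)/(3i^2)\geq 1/4$ for $i\geq 2$ and $b_0=b_1=b_2$ for the first indices, but it should be spelled out if you want a self-contained proof); and the identity $\mu_j=(\mu_j+\gamma_j)/(1-a_{j-1})$ fails literally at $j=1$ under the convention $\gamma_1=0$ (since $a_0\neq 0$ for RKC2) — the final inequality survives only because $\min_i(1-a_i)\leq 1$, so you should treat the $j=1$ term separately.
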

\begin{proof}
	
	For \ref{item:boundnorm} we use 
	\begin{equation}
	\Vert U_{k-j}(\oz I+\ou \Dt A)\Vert_2\leq \max_{-\beta^p(s,\eps)\leq z\leq 0} |U_{k-j}(\oz+\ou z)|\leq |U_{k-j}(\oz)|= U_{k-j}(\oz),
	\end{equation}
	and the fact that $b_j,\mu_j+\gamma_j\geq 0$, hence
	\begin{equation}
	\begin{aligned}
	\Vert\bd_k\Vert_2 &\leq \sum_{j=1}^k\frac{b_k}{b_j}\Vert U_{k-j}(\oz I+\ou \Delta t A)\Vert_2(\mu_j+\gamma_j) \Vert \bm{r}_j\Vert_2
	\leq \max_{j=1,\ldots,k}\Vert \bm{r}_j\Vert_2 \sum_{j=1}^k\frac{b_k}{b_j} U_{k-j}(\oz)(\mu_j+\gamma_j) = \max_{j=1,\ldots,k}\Vert \bm{r}_j\Vert_2 \overline{R}_k(0),
	\end{aligned}
	\end{equation}
	and we conclude using $\overline{R}_k(0)=R_k'(0)=c_k$ \cite{VerwerHundsdorfer1990RKC}. For \cref{item:boundnormbis} with $p=1$ we obtain the same result since $\gamma_j=0$. For $p=2$ we use $\mu_j>0$ to deduce
	\begin{equation}
	\begin{aligned}
	\Vert\tilde{\bd_k}\Vert_2 
	&\leq \max_{j=1,\ldots,k}\Vert \bm{r}_j\Vert_2 \sum_{j=1}^k\frac{b_k}{b_j} U_{k-j}(\oz)\mu_j = C_k \max_{j=1,\ldots,k}\Vert \bm{r}_j\Vert_2.
	\end{aligned}
	\end{equation}
	The constant $C_k$ is bounded using $b_k/b_j<4/3$, $U_{k-j}(\oz)\approx k-j+1$ and $\mu_j\approx 6/(s^2-1)$. When there is no damping those approximations are exact and yield $C_k=4 k(k+1)/(s^2-1)$, hence $C_k\leq C_s=4s(s+1)/(s^2-1)$, which is bounded by $8$ (note that $s\geq 2$ for $p=2$) and decreasing in $s$.
\end{proof}
\color{black}

\paragraph{Convergence analysis}
\mbox{}\\
\moda{We present here the convergence analysis in which we prove that the mixed-precision RKC schemes \cref{eq:1-order-preserving-scheme} and \cref{eq:hybrid-scheme} are 1- and 2-order-preserving, respectively.}

The order of convergence of first- and second-order explicit stabilized schemes is typically only proven for linear problems \cite{AbM01,Ver96,VerwerHundsdorfer1990RKC}, as this is sufficient to infer convergence in the nonlinear case as well \cite{HNW08}. By exploiting the internal stability properties of these methods, such an approach yields error bounds that are independent from the problem stiffness. These results are unusual for explicit methods and are akin to those obtained with B-convergence analysis for implicit methods. In contrast, in our analysis we directly consider the nonlinear case to show that the low-precision Jacobian approximations proposed in \cref{sec:evalhDfj} do not impact convergence. For this purpose, we perform a standard Taylor expansion of the mixed-precision schemes in \cref{thm:convmpRKCbis}, and verify that the order of convergence is preserved. However, with this strategy we cannot see the \moda{benefits, in terms of stability,} of $\hat\Delta\bf_j$ approximating the Jacobian. Therefore, in \cref{thm:accuracympRKC} below we adopt a stronger assumption and unveil the good stability properties brought by the $\hat\Delta\bf_j$ terms.

\moda{In \cref{thm:convmpRKCbis} we assume that we are in a convergence regime so that $\hat\Delta\bm{f}_j$ satisfies \cref{eq:requirementRKC2} in scheme \cref{eq:hybrid-scheme}.}

\begin{theorem}\label{thm:convmpRKCbis}
	Assuming \cref{eq:requirementRKC1}, the solution $\hby^{n+1}$ of the mixed-precision RKC scheme \cref{eq:first-order-scheme} with coefficients \cref{eq:coeffRKC} and \cref{eq:coeffRKC1} or \cref{eq:coeffRKC2} satisfies
	\begin{equation}\label{eq:taylormpRKC1}
	\hby^{n+1} = \hby^n+\Dt\bf(\hby^n)+O(\epsilon\Dt^2+\Dt^2).
	\end{equation}	
	Assuming \cref{eq:requirementRKC2}, the solution $\hby^{n+1}$ of the mixed-precision RKC scheme \cref{eq:first-order-scheme} with coefficients \cref{eq:coeffRKC} and \cref{eq:coeffRKC2} satisfies
	\begin{equation}\label{eq:taylormpRKC2}
	\hby^{n+1} = \hby^n+\Dt\bf(\hby^n)+\frac{1}{2}\Dt^2\bf'(\hby^n)\bf(\hby^n)+O(\Dt^3).
	\end{equation}
\end{theorem}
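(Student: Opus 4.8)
The plan is to perform a Taylor expansion of each stage increment $\hbd_j$ in powers of $\Dt$, tracking the error terms carefully, and then read off the expansion of $\hby^{n+1} = \hby^n + \hbd_s$. The key observation is that the recursion \cref{eq:first-order-scheme} has exactly the same structure as the linear recursion \cref{eq:rk_perturbations} in \cref{lemma:perturbations} once we identify the stage perturbations. Specifically, I would write $\hbd_j = \nu_j\hbd_{j-1} + \kappa_j\hbd_{j-2} + \mu_j\Dt(\bm{f}(\hby^n) + \hat\Delta\bm{f}_{j-1}) + \gamma_j\Dt\bm{f}(\hby^n)$ and substitute $\hat\Delta\bm{f}_{j-1} = \Delta\bm{f}_{j-1} + E_{j-1}$, where $E_{j-1} = O(\epsilon\Dt)$ under \cref{eq:requirementRKC1} or $E_{j-1} = O(\Dt^2)$ under \cref{eq:requirementRKC2}. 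Since $\Delta\bm{f}_{j-1} = \bm{f}(\hby^n + \hbd_{j-1}) - \bm{f}(\hby^n)$, plugging this back in recovers precisely the original RKC recursion \cref{eq:RKC} (with $\hby^n$ in place of $\by^n$) plus the extra forcing terms $\mu_j\Dt\, E_{j-1}$.

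For the first claim \cref{eq:taylormpRKC1}, I would argue as follows. By the known expansion of the exact RKC scheme (quoted from \cite{VerwerHundsdorfer1990RKC} in \cref{sec:rkc}), the contribution of the ``RKC part'' to $\hbd_s$ is $c_s\Dt\bm{f}(\hby^n) + O(\Dt^2) = \Dt\bm{f}(\hby^n) + O(\Dt^2)$, using $c_s = 1$. For the extra forcing, I would apply \cref{lemma:perturbations}\ref{item:perturbations}: the increments generated purely by the perturbations $\bm{r}_j = \mu_j\Dt\, E_{j-1}$ (with $\bm{r}_1 = \bm{0}$, since stage 1 is exact) are $\sum_j \frac{b_s}{b_j} U_{s-j}(\oz I + \ou \Dt A)\, \mu_j \Dt\, E_{j-1}$ for a suitable linearization matrix $A = \bm{f}'(\hby^n)$ — although for a pure order count I only need that this sum is a finite linear combination of $O(\Dt\cdot\epsilon\Dt) = O(\epsilon\Dt^2)$ terms, hence $O(\epsilon\Dt^2)$. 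Combining, $\hbd_s = \Dt\bm{f}(\hby^n) + O(\Dt^2 + \epsilon\Dt^2)$, which is \cref{eq:taylormpRKC1}.

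For the second claim \cref{eq:taylormpRKC2}, the argument is the same but carried one order further. The exact RKC2 scheme (again from \cite{VerwerHundsdorfer1990RKC}) gives $\bd_s = c_s\Dt\bm{f} + \frac{1}{2}c_s^2\Dt^2\bm{f}'\bm{f} + O(\Dt^3)$ with $c_s = 1$, which supplies the first three terms of \cref{eq:taylormpRKC2}; one must check the second-order coefficient is exactly $1/2$, which is what makes RKC2 second-order and is inherited directly. The perturbation contribution is now $\sum_j \frac{b_s}{b_j}U_{s-j}\mu_j\Dt\, E_{j-1}$ with $E_{j-1} = O(\Dt^2)$ under \cref{eq:requirementRKC2}, hence this sum is $O(\Dt^3)$, absorbed into the remainder. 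Note one subtlety: $\Delta\bm{f}_{j-1}$ itself depends on $\hbd_{j-1}$, which in turn contains the perturbations, so strictly speaking the recursion is implicit in the errors; I would handle this by a simple induction on $j$, showing $\hbd_j = \bd_j^{\mathrm{exact}} + O(\epsilon\Dt^2)$ (resp. $O(\Dt^3)$), so that the feedback of the $O(\epsilon\Dt^2)$ error through $\Delta\bm{f}_{j-1}$ (which is Lipschitz) only produces higher-order corrections.

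The main obstacle I anticipate is bookkeeping the coupling between the $\Delta\bm{f}_j$ terms and the accumulated stage errors cleanly — i.e.\ justifying that one can treat the scheme as ``exact RKC plus additive $O(\epsilon\Dt^2)$ forcing'' rather than having the errors compound multiplicatively across the $s$ stages. Since $s$ depends on $\Dt$ through the stability constraint $\Dt\rho \le \beta^p(s,\eps)$, one should be slightly careful that the ``constants'' hidden in the $O(\cdot)$ do not secretly grow with $s$; this is exactly where \cref{lemma:coroll_perturbations}\ref{item:boundnorm} (the bound $\|\bd_k\|_2 \le c_k \max_j\|\bm{r}_j\|_2$ with $c_k \le 1$, uniform in $s$) does the heavy lifting, ensuring the perturbation propagation is bounded independently of the number of stages. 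With that lemma in hand, the induction closes and the order counts above go through.
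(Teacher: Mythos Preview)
Your approach is correct and would close, but it takes a different and somewhat more laborious route than the paper's. You decompose the mixed-precision scheme as ``exact RKC started at $\hby^n$ plus additive forcing $\mu_j\Dt E_{j-1}$'', then invoke the Taylor expansion of exact RKC from \cite{VerwerHundsdorfer1990RKC}, and handle the nonlinear feedback of $\Delta\bm{f}_{j-1}(\hbd_{j-1})$ via an induction comparing $\hbd_j$ to $\bd_j^{\mathrm{exact}}$. The paper instead applies \cref{lemma:perturbations} with $A=0$ \emph{directly to the mixed-precision recursion}, treating the entire stage input $(\mu_j+\gamma_j)\Dt\bm{f}(\hby^n)+\mu_j\Dt\hat\Delta\bm{f}_{j-1}$ as the perturbation $\bm{r}_j$. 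This immediately yields the closed form $\hbd_k=c_k\Dt\bm{f}(\hby^n)+\sum_j\frac{b_k}{b_j}U_{k-j}(\oz)\mu_j\Dt\hat\Delta\bm{f}_{j-1}$ via the identity $\overline{R}_k(0)=c_k$, with the remainder sum bounded by \cref{lemma:coroll_perturbations}\ref{item:boundnormbis}. For the second claim, this same closed form already gives $\hbd_j=c_j\Dt\bm{f}(\hby^n)+O(\Dt^2)$, which is fed back into $\hat\Delta\bm{f}_{j-1}=c_{j-1}\Dt\bm{f}'(\hby^n)\bm{f}(\hby^n)+O(\Dt^2)$; the coefficient $1/2$ then appears via the algebraic identity $\widetilde{R}_s(0)=1/2$ from \cref{lemma:perturbations}\ref{item:identity}, rather than being imported from the exact-RKC expansion.

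The practical advantage of the paper's route is that it avoids your nonlinear feedback loop entirely: since $A=0$, the hypotheses of \cref{lemma:coroll_perturbations} are trivially satisfied and no Lipschitz/linearization step on $\bm{f}$ is needed, so the $s$-independent constants come for free without assuming $\bm{f}'$ symmetric nonpositive definite. Your linearization with $A=\bm{f}'(\hby^n)$ would need that extra hypothesis to invoke \cref{lemma:coroll_perturbations} (this is precisely the hypothesis added in \cref{thm:accuracympRKC}, where your style of argument \emph{is} used). A small correction: since your forcing terms carry the prefactor $\mu_j$ alone (not $\mu_j+\gamma_j$), the relevant bound is \cref{lemma:coroll_perturbations}\ref{item:boundnormbis}, not \ref{item:boundnorm}.
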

\color{myred}
\begin{proof}
	To obtain \cref{eq:taylormpRKC1} we apply \cref{lemma:perturbations} to \cref{eq:first-order-scheme}, with $A=0$ and $\bm{r}_{j}=\mu_j\Dt(\bf(\hby^n)+\hat\Delta\bf_{j-1})+\gamma_j\Dt\bf(\hby^n)$ for $j=1,\ldots,s$ (with $\gamma_1=0$, $\hat\Delta\bf_{0}=\bm{0}$), we obtain
	\begin{equation}\label{eq:sumfordk}
	\begin{aligned}
	\hbd_k &=\sum_{j=1}^k\frac{b_k}{b_j}U_{k-j}(\oz)(\mu_j\Dt(\bf(\hby^n)+\hat\Delta\bf_{j-1})+\gamma_j\Dt\bm{f}(\hby^n))
	=\overline{R}_k(0)\Dt \bm{f}(\hby^n)+\sum_{j=1}^k\frac{b_k}{b_j}U_{k-j}(\oz)\mu_j\Dt\hat\Delta\bf_{j-1},
	\end{aligned}
	\end{equation}
	hence using $\overline{R}_k(0)=c_k$ and $c_s=1$ we have
	\begin{equation}\label{eq:convblabla}
	\begin{aligned}
	\hby^{n+1}=\hby^n + \hbd_s 
	&= \hby^n + \Dt\bf(\hby^n)+\Dt\sum_{j=1}^s\frac{b_s}{b_j}U_{s-j}(\oz)\mu_j\hat\Delta\bf_{j-1}.
	\end{aligned}
	\end{equation}
	The result follows by applying point \ref{item:boundnormbis} of \cref{lemma:coroll_perturbations}, equation \cref{eq:requirementRKC1} and $\Delta\bm{f}_j=O(\Dt)$ to the last sum.
	We now prove \cref{eq:taylormpRKC2}. From $\hat\Delta\bm{f}_j=O(\Dt)$ and \cref{eq:sumfordk} we see that $\hbv_j=\hbd_j-c_j\Dt\bm{f}(\hby^n)=O(\Dt^2)$. Therefore, by using \cref{eq:requirementRKC2} it follows that $\hat\Delta\bf_{j-1}=\bf'(\hby^n)c_{j-1}\Dt\bf(\hby^n) +\bm{r}_j$ with $\bm{r}_j=O(\Dt^2)$.
	From \cref{eq:convblabla} we compute
	\begin{equation}
	\begin{aligned}
	\hby^{n+1}
	&= \hby^n + \Dt\bf(\hby^n)+\Dt\sum_{j=1}^s\frac{b_s}{b_j}U_{s-j}(\oz)\mu_j(c_{j-1}\Dt \bm{f}'(\hby^n)\bm{f}(\hby^n)+\bm{r}_j)\\
	&=  \hby^n + \Dt\bf(\hby^n)+\widetilde{R}_s(0)\Dt^2 \bm{f}'(\hby^n)\bm{f}(\hby^n)+\Dt\sum_{j=1}^s\frac{b_s}{b_j}U_{s-j}(\oz)\mu_j\bm{r}_j,
	\end{aligned}
	\end{equation}
	we conclude by using point \ref{item:boundnormbis} of \cref{lemma:coroll_perturbations} and the relation $\widetilde{R}_s(0)=\lim\limits_{z\to 0}\ (R_s(z)-1-z)/z^2= 1/2$.
\end{proof}
\color{black}
In the proof of \cref{thm:convmpRKCbis} we cannot infer anything about the stability of the methods. In order to \moda{investigate} stability, we require the following assumption:

\begin{assumption}\label{ass:ddf}
	\moda{Schemes \cref{eq:1-order-preserving-scheme} and \cref{eq:hybrid-scheme} satisfy $\Vert\hbd_j\Vert_2\leq\Cdj\Vert\by^n\Vert_2$ for some $\Cdj\geq 2$, and a similar assumption also holds for scheme \cref{eq:RKC}.
		Furthermore $\hat\Delta\bm{f}_j$ satisfy $\Vert\hat\Delta\bm{f}_j-\Delta\bm{f}_j\Vert_2\leq C_\Delta\Vert\hby^n\Vert_2$ for some $C_\Delta>0$.
	}
	Finally, $\bf$ is twice differentiable with $\Vert\bf''(\by)\Vert_2\leq \Cddf$, where $\Cddf>0$ is a small constant.
\end{assumption}
\noindent
\moda{\Cref{ass:ddf} is obviously satisfied as $\Dt\to 0$ since in that case $\hbd_j=c_j\Dt\bm{f}(\hby^n)+O(\Dt^2)$ and $\hat\Delta\bm{f}_j$ satisfy \cref{eq:requirementRKC1} or \cref{eq:requirementRKC2}. For large $\Dt$ it is an internal stability assumption. Note that for linear problems and scheme \cref{eq:RKC} it holds $\Vert\bd_j\Vert_2\leq 2\Vert\by^n\Vert_2$. We will discuss internal stability (i.e.~error propagation within one time step) and the validity of \cref{ass:ddf} in \cref{sec:internalstab}.}
For our analysis it is sufficient for the bound $\Vert\bf''(\by)\Vert_2\leq \Cddf$ to be satisfied in a neighborhood of the solution. Note that assuming $\Cddf$ to be small is not restrictive since the norm of $\bf''$ is not related to stiffness. To see this consider, for instance, the linear case $\bf(\by)=A\by$ where $\Cddf=0$, or also the examples of \cref{sec:numerical_results}.

\moda{\Cref{thm:accuracympRKC,thm:accuracympRKC2orderpres} below provide} estimates for the local errors \moda{of \cref{eq:1-order-preserving-scheme,eq:hybrid-scheme}} and their propagation under \cref{ass:ddf}. Note that the constants in the error estimates \cref{eq:errRKC1,eq:errRKC2,eq:errRKC2-2ordpres} do not depend on the number of stages $s$ nor on the stiffness of $\bf$.
\color{myred}
\begin{theorem}\label{thm:accuracympRKC} 
	Let \cref{ass:ddf} be satisfied and let $\bf'(\by)$ be symmetric and nonpositive definite.
	The error between the exact RKC scheme \cref{eq:RKC} and the mixed-precision RKC scheme \cref{eq:1-order-preserving-scheme}, both with first-order coefficients \cref{eq:coeffRKC,eq:coeffRKC1}, satisfies
	\begin{equation}\label{eq:errRKC1}
	\begin{aligned}
	\Vert \hby^{n+1}-\by^{n+1} \Vert_2 &\leq \Vert \hby^{n}-\by^{n} \Vert_2 +\Cddf\Vert \hby^{n}-\by^{n} \Vert_2^2\Dt + \min(\hat C\epsilon\Dt,C_\Delta\Vert\hby^n\Vert_2)\Dt\\
	&\quad+\Cddf\Cdj^2(\min(\Dt\Vert\bf(\hby^n)\Vert_2,\Vert\hby^n\Vert_2)^2+\min(\Dt\Vert\bf(\by^n)\Vert_2,\Vert\by^n\Vert_2)^2)\Dt,
	\end{aligned}
	\end{equation}
	where $\hat C$ is the error constant in \cref{eq:requirementRKC1}.
	
	The error between the exact RKC scheme \cref{eq:RKC} and the mixed-precision RKC scheme \cref{eq:1-order-preserving-scheme}, both with second-order coefficients \cref{eq:coeffRKC,eq:coeffRKC2}, satisfies
	\begin{equation}\label{eq:errRKC2}
	\begin{aligned}
	\Vert \hby^{n+1}-\by^{n+1} \Vert_2 &\leq \Vert \hby^{n}-\by^{n} \Vert_2 +\Cddf(1+2C_s)\Vert \hby^{n}-\by^{n} \Vert_2^2\Dt + C_s\min(\hat C\epsilon\Dt,C_\Delta\Vert\hby^n\Vert_2)\Dt\\
	&\quad+C_s\Cddf\Cdj^2(\min(\Dt\Vert\bf(\hby^n)\Vert_2,\Vert\hby^n\Vert_2)^2+\min(\Dt\Vert\bf(\by^n)\Vert_2,\Vert\by^n\Vert_2)^2)\Dt,
	\end{aligned}
	\end{equation}
	where $C_s$ is as in \cref{lemma:coroll_perturbations}.
\end{theorem}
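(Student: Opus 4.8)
The plan is to write both the exact stages $\bd_j$ (produced by one step of \cref{eq:RKC} started from $\by^n$) and the mixed-precision stages $\hbd_j$ (one step of \cref{eq:first-order-scheme} started from $\hby^n$) as perturbations of the \emph{same} $A$-linearised three-term recursion \cref{eq:rk_perturbations}, and then to control the stage error $\bm{\epsilon}_j \coloneqq \hbd_j - \bd_j$ with \cref{lemma:perturbations,lemma:coroll_perturbations}. The crucial choice is $A \coloneqq \bf'(\hby^n)$: it is symmetric and nonpositive definite by hypothesis and satisfies $\Dt\rho(A) \le \beta^p(s,\eps)$ because $s$ is chosen that way in \cref{eq:first-order-scheme}; consequently $\Vert R_s(\Dt A)\Vert_2 \le 1$ (since $|R_s(z)| \le 1$ on $[-\beta^p(s,\eps),0] \subseteq [-\ell_s^\eps,0]$ and $A$ is symmetric), and, by point \ref{item:boundnorm} of \cref{lemma:coroll_perturbations}, $\Vert\overline{R}_s(\Dt A)\Vert_2 \le c_s = 1$. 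Linearising about $\bf'(\by^n)$ instead would destroy these bounds, so the exact scheme must be linearised about $\bf'(\hby^n)$ as well.

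First I would linearise. Using a second-order Taylor expansion and $\Vert\bf''\Vert_2 \le \Cddf$, write, for the mixed scheme, $\bf(\hby^n) + \hat\Delta\bf_{j-1} = \bf(\hby^n) + A\hbd_{j-1} + \hat{\bm{\rho}}_{j-1} + \hat{\bm{\omega}}_{j-1}$, where $\hat{\bm{\omega}}_{j-1} \coloneqq \hat\Delta\bf_{j-1} - \Delta\bf_{j-1}$ (with $\Delta\bf_{j-1} = \bf(\hby^n + \hbd_{j-1}) - \bf(\hby^n)$) is the Jacobian-approximation error and $\Vert\hat{\bm{\rho}}_{j-1}\Vert_2 \le \tfrac12\Cddf\Vert\hbd_{j-1}\Vert_2^2$; for the exact scheme, $\bf(\by^n + \bd_{j-1}) - \bf(\by^n) = A\bd_{j-1} + \bm{\rho}_{j-1}$ with $\bm{\rho}_{j-1} = \int_0^1(\bf'(\by^n + t\bd_{j-1}) - A)\bd_{j-1}\,\mathrm{d}t$, so that $\Vert\bm{\rho}_{j-1}\Vert_2 \le \Cddf(\Vert\be^n\Vert_2 + \Vert\bd_{j-1}\Vert_2)\Vert\bd_{j-1}\Vert_2$; and $\bf(\by^n) = \bf(\hby^n) - A\be^n + \bm{\rho}^{0}$ with $\Vert\bm{\rho}^0\Vert_2 \le \tfrac12\Cddf\Vert\be^n\Vert_2^2$. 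Substituting into \cref{eq:RKC} and \cref{eq:first-order-scheme}, both $\{\bd_j\}$ and $\{\hbd_j\}$ solve \cref{eq:rk_perturbations} with matrix $A$ and source terms built from $(\mu_j+\gamma_j)\Dt\bf(\cdot)$ and $\mu_j\Dt(\text{remainders})$; subtracting, $\bm{\epsilon}_j$ solves \cref{eq:rk_perturbations} with $A$, $\bm{\epsilon}_0 = \bm{0}$ and source $(\mu_j+\gamma_j)\Dt A\be^n - (\mu_j+\gamma_j)\Dt\bm{\rho}^0 + \mu_j\Dt(\hat{\bm{\rho}}_{j-1} + \hat{\bm{\omega}}_{j-1} - \bm{\rho}_{j-1})$ (with $\gamma_1 = 0$ and $\hat{\bm{\rho}}_0 = \hat{\bm{\omega}}_0 = \bm{\rho}_0 = \bm{0}$).

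Next, I apply point \ref{item:perturbations} of \cref{lemma:perturbations} and split $\bm{\epsilon}_s$ along this decomposition. By points \ref{item:identity} and \ref{item:eqr} of \cref{lemma:perturbations}, the $(\mu_j+\gamma_j)\Dt A\be^n$ part equals $(R_s(\Dt A)-I)\be^n$ and the $-(\mu_j+\gamma_j)\Dt\bm{\rho}^0$ part equals $-\Dt\,\overline{R}_s(\Dt A)\bm{\rho}^0$, giving
\[
\hby^{n+1} - \by^{n+1} = \be^n + \bm{\epsilon}_s = R_s(\Dt A)\be^n - \Dt\,\overline{R}_s(\Dt A)\bm{\rho}^0 + T,
\]
where $T$ collects the $\mu_j\Dt(\hat{\bm{\rho}}_{j-1}+\hat{\bm{\omega}}_{j-1}-\bm{\rho}_{j-1})$ contributions. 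Then I estimate term by term: $\Vert R_s(\Dt A)\be^n\Vert_2 \le \Vert\be^n\Vert_2$; $\Vert\Dt\,\overline{R}_s(\Dt A)\bm{\rho}^0\Vert_2 \le c_s\Dt\Vert\bm{\rho}^0\Vert_2 \le \tfrac12\Cddf\Vert\be^n\Vert_2^2\Dt$; and, by point \ref{item:boundnormbis} of \cref{lemma:coroll_perturbations}, $\Vert T\Vert_2 \le C_s\Dt\max_j\Vert\hat{\bm{\rho}}_{j-1}+\hat{\bm{\omega}}_{j-1}-\bm{\rho}_{j-1}\Vert_2$, with $C_s = 1$ for $p=1$ and $C_s$ the small constant of \cref{lemma:coroll_perturbations} for $p=2$. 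Inserting the remainder bounds, using Young's inequality on the cross term $\Vert\be^n\Vert_2\Vert\bd_{j-1}\Vert_2 \le \tfrac12(\Vert\be^n\Vert_2^2 + \Vert\bd_{j-1}\Vert_2^2)$, bounding $\Vert\hat{\bm{\omega}}_{j-1}\Vert_2$ by $\hat C\epsilon\Dt$ via \cref{eq:requirementRKC1} and by $C_\Delta\Vert\hby^n\Vert_2$ via \cref{ass:ddf}, and bounding $\Vert\hbd_{j-1}\Vert_2 \le \Cdj\min(\Dt\Vert\bf(\hby^n)\Vert_2,\Vert\hby^n\Vert_2)$ and $\Vert\bd_{j-1}\Vert_2 \le \Cdj\min(\Dt\Vert\bf(\by^n)\Vert_2,\Vert\by^n\Vert_2)$ (the first entry of each $\min$ from point \ref{item:boundnorm} of \cref{lemma:coroll_perturbations} applied to the $A$-linearised form of the stages, the second from \cref{ass:ddf}) produces exactly the terms in \cref{eq:errRKC1} and \cref{eq:errRKC2}.

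I expect the main obstacle to be the constant bookkeeping rather than any single conceptual step: the coefficient $1$ (resp.\ $1+2C_s$) in front of $\Cddf\Vert\be^n\Vert_2^2\Dt$, and the factor $\Cddf\Cdj^2$ (resp.\ $C_s\Cddf\Cdj^2$) multiplying the last term, only emerge after the Taylor remainders of the \emph{exact} scheme — which, unusually, is linearised about $\bf'(\hby^n)$ rather than $\bf'(\by^n)$, so that $\bm{\rho}_{j-1}$ carries the extra cross term — are split with the right Young weights and the $\overline{R}_s$-part and $T$-part contributions are summed carefully. The spectral facts $\Vert R_s(\Dt A)\Vert_2 \le 1$ and $\Vert\overline{R}_s(\Dt A)\Vert_2 \le c_s$, where the symmetry, nonpositive definiteness and $\Dt\rho(A)\le\beta^p(s,\eps)$ hypotheses enter, are the conceptual crux but follow directly from the RKC preliminaries and \cref{lemma:coroll_perturbations}.
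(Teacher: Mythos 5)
Your proposal is correct and follows essentially the same route as the paper's proof: subtract the two recursions, linearize the stage differences, represent the stage error via \cref{lemma:perturbations} (points \ref{item:perturbations}--\ref{item:eqr}), extract the $R_s(\Dt A)\,(\hby^n-\by^n)$ part using $\Vert R_s(\Dt A)\Vert_2\le 1$, and bound the remaining source terms with \cref{lemma:coroll_perturbations}, which yields \cref{eq:errRKC1,eq:errRKC2} up to harmless constant bookkeeping. The only divergence is that you linearize about $\bf'(\hby^n)$ while the paper linearizes about $\bf'(\by^n)$ (so your exact-scheme remainder carries the cross term you handle with Young's inequality); your claim that the latter choice ``would destroy the bounds'' is an overstatement --- the paper makes exactly that choice and invokes the same spectral estimates, the stage-number condition $\Dt\rho\le\beta^p(s,\eps)$ being implicitly assumed for the Jacobian at either point.
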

\begin{proof}
	We prove first \cref{eq:errRKC1}. Let $\bm{E}^n=\hby^n-\by^n$ and $\be_j=\hbd_j-\bd_j$, subtracting \cref{eq:RKC} from \cref{eq:first-order-scheme} yields
	\begin{equation}\label{eq:erriter}
	\begin{aligned}
	\be_0 &= \bm{0}, \qquad \be_1=\mu_1\Dt(\bf(\hby^n)-\bf(\by^n)),\\
	\be_j &= \nu_j\be_{j-1}+\kappa_j\be_{j-2}+\mu_j\Dt(\bm{f}(\hby^n)+\hat\Delta\bm{f}_{j-1}-\bm{f}(\by^n+\bd_{j-1}))+\gamma_j\Dt(\bm{f}(\hby^n)-\bm{f}(\by^n)), \quad j=2,\ldots,s,\\
	\bm{E}^{n+1}&=\bm{E}^n+\be_s.
	\end{aligned}
	\end{equation}
	From \cref{eq:requirementRKC1} we have $\bm{f}(\hby^n)+\hat\Delta\bm{f}_{j}=\bm{f}(\hby^n+\hbd_j)+\bm{r}_{j}$ with $\Vert\bm{r}_{j}\Vert_2=\Vert\hat\Delta\bm{f}_j-\Delta\bm{f}_j\Vert_2\leq \min(\hat C\epsilon\Dt,C_\Delta\Vert\hby^n\Vert_2)$ (cf. \cref{ass:ddf} and \cref{eq:requirementRKC1}) and $C_\Delta,\hat C,\epsilon$ depending on the definition of $\hat\Delta\bm{f}_{j}$. Hence, using $\hby^n=\by^n+\bm{E}^n$,
	\begin{equation}\label{eq:stageerr}
	\begin{aligned}
	\bm{f}(\hby^n)+\hat\Delta\bm{f}_{j}-\bm{f}(\by^n+\bd_j) &= \bm{f}(\by^n+\bm{E}^n+\hbd_j)-\bm{f}(\by^n+\bd_j) +\bm{r}_{j} = \bm{f}'(\by^n)(\bm{E}^n+\be_j)+\bm{r}_{j}+\bm{t}_{j},
	\end{aligned}
	\end{equation}
	where $\bm{t}_{j}$ is the residual of the Taylor expansions of $\bm{f}(\by^n+\bm{E}^n+\hbd_j)$ and $\bm{f}(\by^n+\bd_j)$. From $\hbd_j=c_j\Dt\bm{f}(\hby^n)+O(\Dt^2)$ and \cref{ass:ddf} we have $\Vert\hbd_j\Vert_2\leq \Cdj\min(\Dt\Vert\bm{f}(\hby^n)\Vert_2,\Vert\hby^n\Vert_2)$, since $\Vert\bf''(\by)\Vert_2\leq \Cddf$ it follows that
	\begin{equation}
	\Vert \bm{t}_{j}\Vert_2\leq \Cddf(\Vert\bm{E}^n\Vert_2^2+\Vert\hbd_j\Vert_2^2+\Vert\bd_j\Vert_2^2)\leq \Cddf (\Vert\bm{E}^n\Vert_2^2+\Cdj^2(\min(\Dt\Vert\bf(\hby^n)\Vert_2,\Vert\hby^n\Vert_2)^2+\min(\Dt\Vert\bf(\by^n)\Vert_2,\Vert\by^n\Vert_2)^2)).
	\end{equation}
	Here we used \cref{ass:ddf}. Similarly, we have $\bf(\hby^n)-\bf(\by^n)=\bf'(\by^n)\bm{E}^n+\bm{t}_0$ with $\Vert\bm{t}_0\Vert_2\leq \Cddf\Vert\bm{E}^n\Vert_2^2$. 
	Inserting \cref{eq:stageerr} into \cref{eq:erriter} yields
	\begin{equation}
	\begin{aligned}
	\be_0 &= \bm{0}, \qquad \be_1=\mu_1\Dt(\bf'(\by^n)\bm{E}^n+\bm{t}_0),\\
	\be_j &= \nu_j\be_{j-1}+\kappa_j\be_{j-2}+\mu_j\Dt\bf'(\by^n)\be_{j-1}+\mu_j\Dt(\bf'(\by^n)\bm{E}^n+ \bm{r}_{j-1}+\bm{t}_{j-1})+\gamma_j\Dt(\bf'(\by^n)\bm{E}^n+\bm{t}_0), \quad j=2,\ldots,s,
	\end{aligned}
	\end{equation}
	Let $\bm{r}_0=\bm{0}$. Points \ref{item:perturbations} and \ref{item:eqr} of \cref{lemma:perturbations} together with the relation $R_s(z)=1+\overline{R}_s(z)z$ imply that
	\begin{equation}\label{eq:kjsbf}
	\begin{aligned}
	\bm{E}^{n+1}&=\bm{E}^n+\be_s
	=\bm{E}^n+\sum_{j=1}^s\frac{b_s}{b_j}U_{s-j}(\oz I+\ou\Dt\bf'(\by^n))\Dt((\mu_j+\gamma_j)\bf'(\by^n)\bm{E}^n+\mu_j(\bm{r}_{j-1}+\bm{t}_{j-1})+\gamma_j\bm{t}_0)\\
	&=\bm{E}^n+\overline{R}_s(\Dt\bf'(\by^n))\Dt\bf'(\by^n)\bm{E}^n + \Dt\sum_{j=1}^s\frac{b_s}{b_j}U_{s-j}(\oz I+\ou\Dt\bf'(\by^n))(\mu_j(\bm{r}_{j-1}+\bm{t}_{j-1})+\gamma_j\bm{t}_0)\\
	&=R_s(\Dt\bf'(\by^n))\bm{E}^n + \Dt\sum_{j=1}^s\frac{b_s}{b_j}U_{s-j}(\oz I+\ou\Dt\bf'(\by^n))(\mu_j(\bm{r}_{j-1}+\bm{t}_{j-1})+\gamma_j\bm{t}_0).
	\end{aligned}
	\end{equation}
	We conclude by applying point \ref{item:boundnorm} in \cref{lemma:coroll_perturbations}, by using the relation $\Vert R_s(\Dt\bf'(\by^n))\Vert_2\leq 1$, and by noting that for first-order coefficients we have $\gamma_j=0$.
	In order to prove \cref{eq:errRKC2}, we rewrite \cref{eq:kjsbf} as
	\begin{equation}
	\begin{aligned}
	\bm{E}^{n+1}
	&=R_s(\Dt\bf'(\by^n))\bm{E}^n+ \Dt\sum_{j=1}^s\frac{b_s}{b_j}U_{s-j}(\oz I+\ou\Dt\bf'(\by^n))(\mu_j(\bm{r}_{j-1}+\bm{t}_{j-1}-\bm{t}_0)+(\mu_j+\gamma_j)\bm{t}_0).
	\end{aligned}
	\end{equation}
	The thesis is then readily obtained by invoking point \ref{item:boundnormbis} of \cref{lemma:coroll_perturbations}.
\end{proof}

\begin{theorem}\label{thm:accuracympRKC2orderpres}
	Let \cref{ass:ddf} be satisfied and let $\bf'(\by)$ be symmetric and nonpositive definite.
	The error between the exact RKC scheme \cref{eq:RKC} with second-order coefficients \cref{eq:coeffRKC,eq:coeffRKC2} and the mixed-precision RKC scheme \cref{eq:hybrid-scheme} satisfies
	\begin{equation}\label{eq:errRKC2-2ordpres}
	\begin{aligned}
	\Vert \hby^{n+1}-\by^{n+1} \Vert_2 &\leq \Vert \hby^{n}-\by^{n} \Vert_2 +\Cddf(1+2C_s)\Vert \hby^{n}-\by^{n} \Vert_2^2\Dt + C_s\min(\hat C\Dt^2,C_\Delta\Vert\hby^n\Vert_2)\Dt\\
	&\quad+C_s\Cddf\Cdj^2(\min(\Dt\Vert\bf(\hby^n)\Vert_2,\Vert\hby^n\Vert_2)^2+\min(\Dt\Vert\bf(\by^n)\Vert_2,\Vert\by^n\Vert_2)^2)\Dt,
	\end{aligned}
	\end{equation}
	with $\hat C$ the error constant in \cref{eq:requirementRKC2} and $C_s$ as in \cref{lemma:coroll_perturbations}.
\end{theorem}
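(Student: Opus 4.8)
The plan is to follow the proof of the second-order part of \cref{thm:accuracympRKC} almost verbatim: indeed \cref{eq:errRKC2-2ordpres} is exactly \cref{eq:errRKC2} with $\hat C\epsilon\Dt$ replaced by $\hat C\Dt^2$, and scheme \cref{eq:hybrid-scheme} coincides structurally with scheme \cref{eq:first-order-scheme} equipped with coefficients \cref{eq:coeffRKC,eq:coeffRKC2} and a branch-dependent rule for the $\hat\Delta\bm{f}_j$ terms. First I would set $\bm{E}^n=\hby^n-\by^n$, $\be_j=\hbd_j-\bd_j$ and subtract one step of \cref{eq:RKC} (with second-order coefficients) from one step of \cref{eq:first-order-scheme}; this produces exactly the error recursion \cref{eq:erriter}, since that recursion does not depend on \emph{how} the $\hat\Delta\bm{f}_j$ are computed. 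The only quantity needing a new treatment is the stage residual $\bm{r}_j\coloneqq\hat\Delta\bm{f}_j-\Delta\bm{f}_j$, because in \cref{eq:hybrid-scheme} the choice between \cref{eq:requirementRKC1,eq:requirementRKC2} is made stage by stage according to the test $\Vert\hbd_j-c_j\Dt\bm{f}(\hby^n)\Vert_2\le\Vert\hbd_j\Vert_2$.

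Next I would transcribe the Taylor-expansion and perturbation-propagation machinery unchanged. Writing $\hby^n=\by^n+\bm{E}^n$, expand $\bm{f}(\hby^n)+\hat\Delta\bm{f}_j-\bm{f}(\by^n+\bd_j)=\bm{f}'(\by^n)(\bm{E}^n+\be_j)+\bm{r}_j+\bm{t}_j$, with $\bm{t}_j$ the second-order Taylor remainder, bounded by $\Vert\bm{f}''\Vert_2\le\Cddf$ and \cref{ass:ddf} exactly as in \cref{thm:accuracympRKC} (the $\Cddf\,\Cdj^2\min(\Dt\Vert\bm{f}(\hby^n)\Vert_2,\Vert\hby^n\Vert_2)^2$-type terms), and likewise $\bm{f}(\hby^n)-\bm{f}(\by^n)=\bm{f}'(\by^n)\bm{E}^n+\bm{t}_0$, $\Vert\bm{t}_0\Vert_2\le\Cddf\Vert\bm{E}^n\Vert_2^2$. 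Substituting into \cref{eq:erriter} and applying \cref{lemma:perturbations} (points \ref{item:perturbations} and \ref{item:eqr}, together with $R_s(z)=1+\overline{R}_s(z)z$) with $A=\bm{f}'(\by^n)$ gives the same closed form for $\bm{E}^{n+1}$ as at the end of the proof of \cref{thm:accuracympRKC}, namely $R_s(\Dt\bm{f}'(\by^n))\bm{E}^n$ plus $\Dt$ times a $U_{s-j}(\oz I+\ou\Dt\bm{f}'(\by^n))$-weighted sum of the $\mu_j(\bm{r}_{j-1}+\bm{t}_{j-1}-\bm{t}_0)$ and $(\mu_j+\gamma_j)\bm{t}_0$ contributions. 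Then $\Vert R_s(\Dt\bm{f}'(\by^n))\Vert_2\le1$ (as $\bm{f}'(\by^n)$ is symmetric nonpositive definite and $\Dt\rho\le\beta^p(s,\eps)$) and points \ref{item:boundnorm} and \ref{item:boundnormbis} of \cref{lemma:coroll_perturbations} bound that sum — the latter introducing the constant $C_s$, since the second-order coefficients have $\gamma_j\neq0$ — yielding \cref{eq:errRKC2-2ordpres} once $\Vert\bm{r}_j\Vert_2$ is estimated.

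The single genuinely new ingredient is the residual estimate, and it is here that the two branches of \cref{eq:hybrid-scheme} enter. If $\Vert\hbd_j-c_j\Dt\bm{f}(\hby^n)\Vert_2\le\Vert\hbd_j\Vert_2$, then $\hat\Delta\bm{f}_j$ obeys \cref{eq:requirementRKC2}, so $\Vert\bm{r}_j\Vert_2\le\hat C\Dt^2$ with $\hat C$ the constant of \cref{eq:requirementRKC2}; combined with $\Vert\bm{r}_j\Vert_2=\Vert\hat\Delta\bm{f}_j-\Delta\bm{f}_j\Vert_2\le C_\Delta\Vert\hby^n\Vert_2$ from \cref{ass:ddf}, this yields $\Vert\bm{r}_j\Vert_2\le\min(\hat C\Dt^2,C_\Delta\Vert\hby^n\Vert_2)$. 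If the test fails, only the $C_\Delta$ bound from \cref{ass:ddf} is available, and one has to argue that this coincides with the active branch of the minimum: as discussed after \cref{eq:hybrid-scheme}, in a convergence regime $\hbd_j=c_j\Dt\bm{f}(\hby^n)+O(\Dt^2)$ \emph{forces} the test to hold, so a failure only occurs for $\Dt$ bounded away from $0$ (large $\Dt$, where $\hbd_j$ is small by stability while $c_j\Dt\bm{f}(\hby^n)$ is large by stiffness), a regime in which the $C_\Delta\Vert\hby^n\Vert_2$ term is the relevant one. I expect exactly this gluing step — reconciling the two branch-dependent one-sided residual bounds into the single uniform estimate $\Vert\bm{r}_j\Vert_2\le\min(\hat C\Dt^2,C_\Delta\Vert\hby^n\Vert_2)$, with constants independent of $s$ and of the stiffness — to be the main obstacle. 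Once it is in place, the remaining assembly (a final application of point \ref{item:boundnormbis} of \cref{lemma:coroll_perturbations}) is identical to that of \cref{thm:accuracympRKC} and delivers \cref{eq:errRKC2-2ordpres}.
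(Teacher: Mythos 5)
Your proposal follows exactly the paper's route: the paper proves this theorem in a single line, declaring it analogous to the proof of \cref{thm:accuracympRKC} with the stage residual now bounded by $\min(\hat C\Dt^2,C_\Delta\Vert\hby^n\Vert_2)$, which is precisely the argument you spell out (error recursion \cref{eq:erriter}, Taylor remainders, \cref{lemma:perturbations,lemma:coroll_perturbations}, $\Vert R_s(\Dt\bf'(\by^n))\Vert_2\leq 1$). The branch-gluing issue you flag is not addressed in the paper at all --- the second-order residual bound is simply asserted, in line with the convention stated before \cref{thm:convmpRKCbis} that \cref{eq:requirementRKC2} is taken to hold in a convergence regime --- so your discussion of how the two one-sided bounds combine is, if anything, more careful than the original.
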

\begin{proof}
	The proof is analogous to \cref{thm:convmpmRKC} but with $\Vert \bm{r}_j\Vert_2\leq \min(\hat C\Dt^2,C_\Delta\Vert\hby^n\Vert_2)$.
\end{proof}

Note that due to $\hat\Delta\bf_j$ approximating the Jacobian we could use the stability polynomials of the RKC methods and show that the lower-order term $\Vert \hby^n-\by^n\Vert_2$ is not amplified. Due to the nonlinearity we also have a term $\Vert \hby^n-\by^n\Vert_2^2\Dt$ in the recursive relation, but it is of higher order.
If we had not made \cref{ass:ddf} outside of a convergence regime the error could still grow with $\Dt$ like $\hat C\eps\Dt$ (or $\hat C \Dt^2$), $\Dt\Vert\bm{f}(\hby^n)\Vert_2$, and $\Dt\Vert\bm{f}(\by^n)\Vert_2$. These terms could become very large and cause the scheme to become unstable. 
\color{black}

\subsection{Internal error propagation}\label{sec:internalstab}
\mbox{}\\
We now investigate the propagation of rounding errors within one time step and \moda{the} validity of \cref{ass:ddf}. For this purpose, we assume that the problem is linear and therefore $\bf(\by)=A\by$, where $A\in\R^{n\times n}$ is a symmetric nonpositive definite matrix. \moda{We also assume that in \cref{eq:requirementRKC1,eq:requirementRKC2} $\hat\Delta\bm{f}_j$ is computed with \cref{eq:evalDeltaf_mixed,eq:evalDeltaf_mixed2} respectively.} We stress that the next estimates are very pessimistic as they are worst-case bounds and do not take into account rounding error cancellation effects \cite{HighamMary2019}. 

\begin{theorem}\label{thm:stabmpRKC}
	\moda{Let $\hby^n$ be the solution computed by the mixed-precision RKC schemes \cref{eq:1-order-preserving-scheme} or \cref{eq:hybrid-scheme}, and let $s$ be} such that $\Dt\rho\leq \beta^p(s,\eps)$, where $\rho$ is the spectral radius of the nonpositive definite matrix $A$. Then
	\begin{equation}\label{eq:ynpu}
	\hby^{n+1}= R_s(\Dt A) \hby^n+\bm{r}_s(\hby^n),
	\end{equation}
	where $\bm{r}_s(\hby^n)$ represents the rounding errors introduced at time step $n$. 
	\color{myred}
	It holds
	\begin{equation}\label{eq:boundrs}
	\Vert \bm{r}_s(\hby^n)\Vert_2
	\leq \max_{k=1,\ldots,s-1}\Vert R_k(\Dt A)-I\Vert_2 \left((1+C(s,\eps)\Dt u)^{s-1}-1\right)\Vert\hby^n\Vert_2,
	\end{equation}
	where $\barc$, $\barm$ are as in \cref{lemma:boundDA} and
	\begin{equation}
	C(s,\eps) = 3\barc\barm^2\ou\rho \max_{j=0,\ldots,s-2}\Vert U_{j}(\oz I +\ou\Dt A)\Vert_2.
	\end{equation}
\end{theorem}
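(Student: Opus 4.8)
The plan is to use linearity to recast one step of the mixed-precision scheme as a perturbed instance of the linear recursion \cref{eq:rk_perturbations}, invoke \cref{lemma:perturbations} to split the exact propagator $R_s(\Dt A)$ from the accumulated rounding errors, bound those errors with \cref{lemma:boundDA}, and finally close a discrete Gronwall estimate.

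First I would specialise \cref{eq:first-order-scheme} to $\bf(\by)=A\by$. Under \cref{assumption:high_precision_exact}, the only inexact operation in a stage is the single low-precision matrix-vector product inside $\hat\Delta\bm{f}_{j-1}$: for scheme \cref{eq:1-order-preserving-scheme} evaluated via \cref{eq:evalDeltaf_mixed} one has $\hat\Delta\bm{f}_{j-1}=\hat A\hbd_{j-1}=A\hbd_{j-1}+\Delta A_{j-1}\hbd_{j-1}$, while for the hybrid scheme \cref{eq:hybrid-scheme} evaluated via \cref{eq:evalDeltaf_mixed2} one has $\hat\Delta\bm{f}_{j-1}=A\hbd_{j-1}+\Delta A_{j-1}\hbv_{j-1}$ (the high-precision term $c_{j-1}\Dt A\bf(\hby^n)$ being exact), with $\hbv_{j-1}=\hbd_{j-1}-c_{j-1}\Dt\bf(\hby^n)$ and the switch in \cref{eq:hybrid-scheme} guaranteeing $\Vert\hbv_{j-1}\Vert_2\leq\Vert\hbd_{j-1}\Vert_2$. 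By \cref{lemma:boundDA} the rounding error satisfies $\Vert\Delta A_{j-1}\Vert_2\leq\barc\barm^2u\rho$ (using $\Vert A\Vert_2=\rho$). Writing the correction as $\bm{\zeta}_j=\mu_j\Dt\,\Delta A_{j-1}\hat{\bm w}_{j-1}$ with $\hat{\bm w}_{j-1}\in\{\hbd_{j-1},\hbv_{j-1}\}$ (and $\bm{\zeta}_1=\bm{0}$), the stages read $\hbd_1=\mu_1\Dt A\hby^n$ and
\[
\hbd_j=\nu_j\hbd_{j-1}+\kappa_j\hbd_{j-2}+\mu_j\,\Dt A\hbd_{j-1}+(\mu_j+\gamma_j)\Dt A\hby^n+\bm{\zeta}_j,\qquad j=2,\ldots,s,
\]
which is exactly \cref{eq:rk_perturbations} with $\bm{r}_j=(\mu_j+\gamma_j)\Dt A\hby^n+\bm{\zeta}_j$ and $\gamma_1=0$.

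Next I would separate the two contributions to $\bm{r}_j$ with \cref{lemma:perturbations}. The forcing $(\mu_j+\gamma_j)\Dt A\hby^n$ is of the form treated in point~\ref{item:eqr} (with its $\bm r$ equal to $\Dt A\hby^n$), hence it contributes $\overline{R}_k(\Dt A)\Dt A\hby^n=(R_k(\Dt A)-I)\hby^n$; by point~\ref{item:perturbations} the $\bm{\zeta}_j$ contribute $\sum_{j=2}^{k}\tfrac{b_k}{b_j}U_{k-j}(\oz I+\ou\Dt A)\bm{\zeta}_j$ (the $j=1$ term vanishing). Therefore $\hbd_k=(R_k(\Dt A)-I)\hby^n+\sum_{j=2}^{k}\tfrac{b_k}{b_j}U_{k-j}(\oz I+\ou\Dt A)\bm{\zeta}_j$, and taking $k=s$ yields $\hby^{n+1}=\hby^n+\hbd_s=R_s(\Dt A)\hby^n+\bm{r}_s(\hby^n)$ with $\bm{r}_s(\hby^n)=\sum_{j=2}^{s}\tfrac{b_s}{b_j}U_{s-j}(\oz I+\ou\Dt A)\bm{\zeta}_j$, which is \cref{eq:ynpu}. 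To estimate $\bm{r}_s(\hby^n)$ I would use $\Vert U_{s-j}(\oz I+\ou\Dt A)\Vert_2\leq M:=\max_{l=0,\ldots,s-2}\Vert U_l(\oz I+\ou\Dt A)\Vert_2$ and $\tfrac{b_s}{b_j}\mu_j=2\ou\tfrac{b_s}{b_{j-1}}\leq 3\ou$ (using $b_s/b_{j-1}\leq 1$ for $p=1$ and $b_s/b_{j-1}<4/3$ for $p=2$, as in the proof of \cref{lemma:coroll_perturbations}). With $D_k=\Vert\hbd_k\Vert_2$, $P=\max_{k=1,\ldots,s-1}\Vert R_k(\Dt A)-I\Vert_2$ and $\alpha=C(s,\eps)\Dt u$, the formula for $\hbd_k$ together with $\Vert\hat{\bm w}_{j-1}\Vert_2\leq D_{j-1}$ gives, for $k=1,\ldots,s-1$, the recursion $D_k\leq P\Vert\hby^n\Vert_2+\alpha\sum_{j=1}^{k-1}D_j$; since $D_1\leq P\Vert\hby^n\Vert_2$ (because $R_1(\Dt A)-I=\mu_1\Dt A$), induction on $k$ yields $D_k\leq P\Vert\hby^n\Vert_2(1+\alpha)^{k-1}$. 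Inserting this into $\bm{r}_s(\hby^n)$ and summing the geometric series,
\[
\Vert\bm{r}_s(\hby^n)\Vert_2\leq\alpha\sum_{k=1}^{s-1}D_k\leq\alpha P\Vert\hby^n\Vert_2\sum_{k=1}^{s-1}(1+\alpha)^{k-1}=P\bigl((1+\alpha)^{s-1}-1\bigr)\Vert\hby^n\Vert_2,
\]
which is \cref{eq:boundrs}.

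I expect the main obstacle to be the bookkeeping in the first two steps rather than any delicate estimate: one must verify that for both \cref{eq:1-order-preserving-scheme} and \cref{eq:hybrid-scheme} every operation other than the lone low-precision matvec is exact, that the resulting perturbation enters precisely as the $\bm{r}_j$ of \cref{lemma:perturbations}, and that the mismatched $b_s/b_j$ and $\mu_j$ factors recombine into the clean constant $3\ou$ appearing in $C(s,\eps)$. The Gronwall step is then routine, but one has to keep in mind that $\bm{\zeta}_j$ depends on the already-perturbed iterate $\hbd_{j-1}$, which is why the estimate is self-referential and produces the factor $(1+C(s,\eps)\Dt u)^{s-1}-1$ rather than one that is merely linear in $u$.
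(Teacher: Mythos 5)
Your proposal is correct and follows essentially the same route as the paper: linearize the scheme, use \cref{lemma:perturbations} to split $\hbd_k$ into $(R_k(\Dt A)-I)\hby^n$ plus the accumulated matvec perturbations, bound $\Vert\Delta A_j\Vert_2$ via \cref{lemma:boundDA} with the same recombination of $b_k/b_j$ and $\mu_j$ into the constant $3\ou$, and close with the same self-referential recursion on $\Vert\hbd_k\Vert_2$ yielding the factor $(1+C(s,\eps)\Dt u)^{s-1}-1$, treating the hybrid scheme by replacing $\hbd_{j-1}$ with $\hbv_{j-1}$ and invoking the switch condition exactly as the paper does.
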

\color{myred}
\begin{proof}
	Scheme \cref{eq:first-order-scheme} with $\bf(\by)=A\by$ and $\hat\Delta\bf_j$ as in \cref{eq:evalDeltaf_mixed} reads
	\begin{equation}\label{eq:recstaban}
	\begin{aligned}
	\hbd_0 &= \bm{0},\quad \hbd_1 = \mu_1 \Dt A \hby^n,\\
	\hbd_j &= \nu_j \hbd_{j-1} + \kappa_j \hbd_{j-2} + \mu_j\Delta t\hat{A}\hbd_{j-1} + (\mu_j+\gamma_j) \Dt A \hby^n\\
	&= \nu_j \hbd_{j-1} + \kappa_j \hbd_{j-2} + \mu_j\Delta t A\hbd_{j-1} +(\mu_j+\gamma_j) \Dt A \hby^n+ \mu_j \Dt\Delta A_{j-1}\hbd_{j-1}, \quad j=2,\dots,s.
	\end{aligned}
	\end{equation}
	Here we are again using the notation $\Delta A_{j-1}$ to indicate rounding errors in the matrix-vector products. \Cref{lemma:perturbations} implies
	\begin{equation}\label{eq:exprhbdk}
	\begin{aligned}
	\hbd_k=\overline{R}_k(\Dt A)\Dt A\hby^n+\bm{r}_k(\hby^n)
	= (R_k(\Dt A)-I)\hby^n+\bm{r}_k(\hby^n),
	\end{aligned}
	\end{equation}
	\color{black}
	with 
	\begin{equation}\label{eq:defrk}
	\bm{r}_k(\hby^n)=\sum_{j=1}^k\frac{b_k}{b_j}U_{k-j}(\oz I +\ou\Dt A)\mu_j\Dt \Delta A_{j-1} \hbd_{j-1}
	=2\ou\Dt\sum_{j=1}^{k-1}\frac{b_k}{b_j}U_{k-j-1}(\oz I +\ou\Dt A)\Delta A_{j} \hbd_{j},
	\end{equation}
	where for the second equality we used $\hbd_0=\bm{0}$ and \cref{eq:coeffRKC}. Equation \cref{eq:exprhbdk} and $\hby^{n+1}=\hby^n+\hbd_s$ yield \cref{eq:ynpu}. \Cref{lemma:boundDA}, \cref{eq:defrk} and \moda{$b_k/b_j\leq 3/2$} imply
	\begin{equation}\label{eq:boundtmprk}
	\Vert \bm{r}_k(\hby^n)\Vert_2
	\leq \moda{3}\ou\Dt \max_{j=0,\ldots,k-2}\Vert U_{j}(\oz I +\ou\Dt A)\Vert_2\sum_{j=1}^{k-1}\Vert \Delta A_{j} \hbd_{j}\Vert_2
	\leq C(s,\eps)\Dt u\sum_{j=1}^{k-1}\Vert\hbd_{j}\Vert_2.
	\end{equation}
	Using \cref{eq:boundtmprk} in \cref{eq:exprhbdk} we obtain
	\begin{equation}
	\Vert\hbd_k\Vert_2\leq \Vert R_k(\Dt A)-I\Vert_2\Vert\hby^n\Vert_2+C(s,\eps)\Dt u\sum_{j=1}^{k-1}\Vert\hbd_j\Vert_2,
	\end{equation}
	and prove, recursively, that 
	\begin{equation}\label{eq:boundmaxrk}
	\max_{k=1,\ldots,s}\Vert \bm{r}_k(\hby^n)\Vert_2\leq C(s,\eps)\Dt u\sum_{j=1}^{s-1}\Vert\hbd_j\Vert_2\leq \max_{k=1,\ldots,s-1}\Vert R_k(\Dt A)-I\Vert_2 \left((1+C(s,\eps)\Dt u)^{s-1}-1\right)\Vert\hby^n\Vert_2.
	\end{equation}
	\moda{For the hybrid scheme \cref{eq:hybrid-scheme} the proof is analogous. The only difference is that if $\Vert \hbd_j-c_j\Dt\bm{f}(\hby^n)\Vert_2\leq\Vert\hbd_j\Vert_2$ then in \cref{eq:recstaban} we have $\Delta A_{j-1}(\hbd_{j-1}-c_{j-1}\Dt\bm{f}(\hby^n))$ instead of $\Delta A_{j-1}\hbd_{j-1}$. After bounding $\Vert \hbd_j-c_j\Dt\bm{f}(\hby^n)\Vert_2\leq\Vert\hbd_j\Vert_2$, the rest of the proof remains unchanged.}
\end{proof}

\color{myred}
From \cref{thm:stabmpRKC} we can derive rough upper bounds for the constants $\Cdj$, $C_\Delta$ in \cref{ass:ddf}. Equations \cref{eq:exprhbdk,eq:boundmaxrk} yield
\begin{equation}
\Vert\hbd_j\Vert_2\leq \max_{k=1,\ldots,s-1}\Vert R_k(\Dt A)-I\Vert_2 (1+C(s,\eps)\Dt u)^{s-1}\Vert\hby^n\Vert_2
\end{equation}
and thus, using $\Vert R_k(\Dt A)-I\Vert_2\leq 2$,
\begin{equation}
\Cdj = 2 (1+C(s,\eps)\Dt u)^{s-1} \leq 2 e^{C(s,\eps)\Dt u(s-1)}\leq 2e^{2\bar{C}s^2u}.
\end{equation}
We deduce that $\Cdj$ is guaranteed to remain small provided that $s^2 u=O(1)$. However, we remark that the bounds in \cref{thm:stabmpRKC} are worst-case rounding error bounds that are very pessimistic, and we see that in practice the scheme remains stable also for $s^2 u\gg 1$, see \cref{sec:numerical_results}.

We now look at the effect of conditions \cref{eq:requirementRKC1,eq:requirementRKC2} onto $C_\Delta$. Under the first-order condition \cref{eq:requirementRKC1} we have
\begin{equation}
\Vert\hat\Delta\bm{f}_j-\Delta\bm{f}_j\Vert_2=\Vert \hat A\hbd_j-A\hbd_j\Vert_2
\leq  \barc\barm^2\rho u\Vert\hbd_j\Vert_2\leq \Cdj \barc\barm^2\rho u  \Vert\hby^n\Vert_2,
\end{equation}
hence $C_\Delta =\Cdj \barc\barm^2\rho u$ and a similar discussion follows as for $\Cdj$. On the other hand, under the second-order condition \cref{eq:requirementRKC2} we obtain instead 
\begin{equation}
\begin{aligned}
\Vert\hat\Delta\bm{f}_j-\Delta\bm{f}_j\Vert_2 &=\Vert \hat A(\hbd_j-c_j\Dt A\hby^n)-A(\hbd_j-c_j\Dt A\hby^n)\Vert_2\leq  \barc\barm^2\rho u\Vert\hbd_j-c_j\Dt A\hby^n\Vert_2 
\leq \barc\barm^2\rho u( \Cdj+c_j\Dt \rho)\Vert\hby^n\Vert_2
\end{aligned}
\end{equation}
and $C_\Delta = \barc\barm^2\rho u( \Cdj+c_j\Dt \rho)$. Under condition \cref{eq:requirementRKC2} $C_\Delta$ is therefore much larger than under condition \cref{eq:requirementRKC1} due to the term $\Dt\rho=O(s^2)$. This further confirms that condition \cref{eq:requirementRKC2} should only be enforced in a convergence regime, i.e. when $\Dt\rho=O(1)$, and explains why we resort to the hybrid scheme \cref{eq:hybrid-scheme}.

We remark that relation \cref{eq:ynpu} has a crucial difference with respect to the rounding error estimate given in \cite[Eq. (3.10)]{VerwerHundsdorfer1990RKC}. In \cite{VerwerHundsdorfer1990RKC} the perturbations at each stage were assumed to be independent, while here we are considering the propagation of previous perturbations. Indeed, in \cref{eq:defrk} each perturbation $\Delta A_k\hbd_k$ depends on $\hbd_k$, which in turn depends on $\Delta A_j\hbd_j$ for $j=1\ldots,k-1$ \cref{eq:exprhbdk}. The estimate found in \cite{VerwerHundsdorfer1990RKC} is for a standard RKC method \cref{eq:RKC} in which all operations are performed with the same precision $u$, and we can thus compare their result with the one we obtained for our mixed-precision scheme. 
\color{myred}
To do so, let $\psi_{\max}\! \coloneqq \max_{k=1,\ldots,s-1}\Vert R_k(\Dt A)-I\Vert_2$, from \cref{eq:ynpu}
\begin{equation}\label{eq:boundynpu}
\Vert\hby^{n+1}\Vert_2
\leq \left(1+\psi_{\max} \left((1+C(s,\eps)\Dt u)^{s-1}-1\right)\right) \Vert\hby^n\Vert_2.
\end{equation}
In the asymptotic regime, it holds that
\begin{equation}
(1+C(s,\eps)\Dt u)^{s-1}-1\approx C(s,\eps)\Dt u(s-1).
\end{equation}
Using the bound $\max_{j=0,\ldots,s-2}\Vert U_j(\oz I+\ou\Dt A)\Vert_2\leq \tilde C(s-1)$, with $\tilde C$ close to $1$, we have $C(s,\eps)\leq 3\tilde \bar{C} \barc\barm^2 \ou\rho(s-1)$, hence
\begin{equation}
C(s,\eps)\Dt u (s-1)\leq 3\tilde C \barc\barm^2 \ou\rho\Dt u(s-1)^2 \leq C \Dt\rho u. 
\end{equation}
Here we used the relation $\ou(s-1)^2\leq C_\omega$ where $C_\omega$ is small (cf.~\cref{eq:coeffRKC1,eq:coeffRKC2}), and we have set $\bar{C}=3C_\omega\tilde C \barc\barm^2$.

Since $\psi_{\max}=O(\Dt)$, the stability estimate of our mixed-precision RKC schemes \cref{eq:1-order-preserving-scheme,eq:hybrid-scheme} behaves as $1+C \rho u \Dt^{2}\leq 1+Cs^2 u\Dt$. In contrast, in \cite{VerwerHundsdorfer1990RKC} the authors find that the constant in the stability estimate behaves as $1+C s^2 u$ and is independent from $\Dt$. This difference stems from the fact that standard RKC schemes are not order preserving, i.e.~if they are run entirely in low precision they do not converge and their error stagnates (or grows like $O(u\Dt^{-1})$, cf.~\cite{CrociGilesSR2020}) as $\Dt\rightarrow 0$.

In a non-asymptotic regime $\psi_{\max}=O(1)$ thus the constant in estimate \cref{eq:boundynpu} could grow as quickly as $1+\bar{C}\rho u\Dt=1+\bar{C}s^{2}u$ and suggests that the scheme might become unstable whenever $s^{2} u$ is large. We investigate the stability of the mixed-precision RKC schemes in practice in \cref{sec:numerical_results} and verify that they remain stable for a very large number of stages (we stopped our experiments at $s=512$) even when the low-precision computations are performed in half-precision.
\color{black}

Unfortunately, we were unable to prove stability of the schemes analytically. The main difficulty stems from the fact that rounding errors affect all frequencies, and destroy any spectral relation between $\hby^n$ and $\hbd_j$, therefore \moda{impeding} any kind of stability analysis based on: 1) damping effects due to eigenvalues far from the origin, and 2) accuracy for those close to zero. Indeed, the accuracy properties of the stability polynomial would need to be taken into account to achieve better estimates: for $z$ close to zero we have $U_j(\oz+\ou z)=O(j+1)$, causing roundoff errors $\Delta A_j\hbd_j$ with low frequencies to be amplified (cf. \cref{eq:defrk}). If rounding errors preserved spectral relations, these errors would then be compensated by the fact that $R_k(z)-I\approx 0$ (cf. \cref{eq:exprhbdk}). However, frequencies of $\hby_n$ and $\Delta A_j\hbd_j$ are uncorrelated, making such an analysis impossible.  
Under the assumption that the smallest (in magnitude) eigenvalue of $\Dt A$ is sufficiently separated from the origin, we can prove that the schemes are stable by using damping properties only. This is possible thanks to the fact that $|U_j(\oz+\ou z)|\leq 2$ for $z$ sufficiently far from $0$. However, this assumption requires $A$ to have a small condition number, which is a very restrictive condition. Interestingly, lack of separation between the eigenvalues of $\Dt A$ and the origin does not seem to affect stability in practical experiments (cf.~\cref{sec:numerical_results}).
\color{black}

\section{Mixed-precision multirate RKC method}\label{sec:multirate}
In this section we consider a multirate differential equation of the type \cref{eq:mrode},
where $\fs$ is an expensive, but only mildly stiff term associated to relatively slow ($S$) time-scales and $\ff$ is a cheap, yet severely stiff term associated to fast ($F$) time scales. Typical applications are chemical kinetics problems with different reaction rates, electric circuits with active and latent components, and systems stemming from the spatial discretization of diffusion-dominated parabolic PDEs over graded meshes. In this latter case, $\ff$ and $\fs$ typically correspond to the discrete diffusion operator over the fine and coarse degrees-of-freedom respectively (i.e.~over refined and coarse mesh portions).

When an explicit stabilized scheme as RKC is applied to \cref{eq:mrode}, the number of stages $s$ is determined by the stiffness of $\ff$, even when $\ff$ has very few severely stiff degrees of freedom. Hence, the number $s$ of expensive $\fs$ evaluations depends on $\ff$ and this relation destroys the efficiency of the RKC scheme. In \cref{sec:mRKC} below we recall the mRKC scheme from \cite{AGR20}, where the evaluation of $\ff,\fs$ is decoupled and the number of $\fs$ evaluations depends solely on the mild stiffness of $\fs$ itself. Hence, the mRKC scheme is barely affected by few severely stiff terms and recovers the original efficiency of RKC methods without sacrificing accuracy.

\subsection{The multirate RKC method}\label{sec:mRKC}
The mRKC scheme is based on the modified equation
\begin{equation}\label{eq:modeq}
\ye'=\fe(\ye),\qquad \qquad \ye(0)=\by^0,
\end{equation}
for \cref{eq:mrode}. The modified right-hand side $\fe$, called \emph{averaged force}, depends on a free parameter $\eta\geq 0$ and is a good approximation to the exact $\bf=\ff+\fs$. Yet, for the right choice of $\eta$, the stiffness of $\fe$ depends on $\fs$ only and integration of \cref{eq:modeq} with an RKC scheme is cheaper than \cref{eq:mrode}. Evaluation of $\fe$ requires the solution of a stiff, yet cheap auxiliary problem, that is also approximated using an RKC scheme. 

\paragraph{The averaged force} 
\mbox{}\\
Before defining the mRKC scheme we introduce the averaged force $\fe$ and briefly discuss its properties. We refer to \cite{AGR20} for further details.
\begin{definition}\label{def:fe}
	For $\eta>0$, the averaged force $\fe:\R^n\rightarrow\R^n$ is defined as
	\begin{equation}\label{eq:deffedif}
	\fe(\by)=\frac{1}{\eta}(\bu(\eta)-\by),
	\end{equation}
	where {\it the auxiliary solution} $\bu:[0,\eta]\rightarrow \R^n$ is defined by {\it the auxiliary problem}
	\begin{align}\label{eq:defu}
	\bu'&=\ff(\bu)+\fs(\by), & 
	\bu(0)=\by.
	\end{align}
	For $\eta=0$, let $\bf_0=\bf$ (note that $\bf_0=\lim_{\eta\to 0^+}\fe$).
\end{definition}
Hence, an auxiliary problem \cref{eq:defu} with initial condition $\by=\ye(t)$ must be solved whenever $\fe(\ye(t))$ is evaluated in \cref{eq:modeq}.
Using \cref{eq:defu,eq:deffedif} we compute
\begin{equation}\label{eq:deffeint}
\fe(\by)=\frac{1}{\eta}\int_0^\eta \bu'(s)\text{ d}s = \fs(\by)+\frac{1}{\eta}\int_0^\eta \ff(\bu(s))\text{ d}s,
\end{equation}
thus $\fe$ evaluates $\fs$ exactly and computes an average of $\ff$ along the auxiliary solution $\bu$. This average has a damping effect on $\ff$ and reduces its stiffness. In the next lemma, proved in \cite{AGR20}, we show in a particular case the effects of the average and the size of $\eta$.

\begin{lemma}\label{lemma:phif}
	Let $\ff(\by)=A_F\,\by$ with $A_F\in\R^{n\times n}$. Then
	\begin{equation}\label{eq:deffephi}
	\fe(\by)=\varphi(\eta A_F)\bf(\by),
	\end{equation}
	where 
	\begin{equation}\label{eq:defphi}
	\varphi(z)=\frac{e^z-1}{z},\ \text{ for } z\neq 0,\qquad \text{and}\qquad \varphi(0)=1.
	\end{equation}
\end{lemma}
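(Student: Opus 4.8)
The plan is to use the linearity of $\ff$ to solve the auxiliary problem \cref{eq:defu} in closed form and then read off $\fe$ from its definition \cref{eq:deffedif}. First I would fix $\by\in\R^n$ and note that, since $\by$ is held constant in \cref{eq:defu}, the term $\fs(\by)$ is a \emph{constant} vector; hence \cref{eq:defu} is the linear inhomogeneous ODE $\bu'=A_F\bu+\fs(\by)$, $\bu(0)=\by$. By the variation-of-constants (Duhamel) formula its solution is
\begin{equation*}
\bu(t)=e^{tA_F}\by+\left(\int_0^t e^{(t-\sigma)A_F}\,\mathrm{d}\sigma\right)\fs(\by),
\end{equation*}
and the substitution $\sigma\mapsto t-\sigma$ turns the matrix integral into $\int_0^t e^{\sigma A_F}\,\mathrm{d}\sigma$.

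Next I would identify this integral with the $\varphi$-function of \cref{eq:defphi}. Integrating the exponential power series $e^{\sigma A_F}=\sum_{k\ge0}\sigma^kA_F^k/k!$ term by term (legitimate by uniform convergence on the compact interval) gives $\int_0^t e^{\sigma A_F}\,\mathrm{d}\sigma=\sum_{k\ge0}t^{k+1}A_F^k/(k+1)!=t\,\varphi(tA_F)$. I would deliberately argue via the series rather than via $A_F^{-1}(e^{tA_F}-I)$, so as not to assume $A_F$ invertible. Setting $t=\eta$ then yields $\bu(\eta)=e^{\eta A_F}\by+\eta\,\varphi(\eta A_F)\fs(\by)$.

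Finally I would substitute into \cref{eq:deffedif}:
\begin{equation*}
\fe(\by)=\frac{1}{\eta}\bigl(\bu(\eta)-\by\bigr)=\frac{1}{\eta}\bigl(e^{\eta A_F}-I\bigr)\by+\varphi(\eta A_F)\fs(\by),
\end{equation*}
and use the elementary identity $z\,\varphi(z)=e^z-1$ — valid for the matrix argument $\eta A_F$ because $\varphi(\eta A_F)$ is a power series in $A_F$ and hence commutes with $A_F$ — to rewrite $\tfrac{1}{\eta}(e^{\eta A_F}-I)=\varphi(\eta A_F)A_F$. This gives
\begin{equation*}
\fe(\by)=\varphi(\eta A_F)\bigl(A_F\by+\fs(\by)\bigr)=\varphi(\eta A_F)\bigl(\ff(\by)+\fs(\by)\bigr)=\varphi(\eta A_F)\bf(\by),
\end{equation*}
which is \cref{eq:deffephi}; the $\eta=0$ case is consistent with \cref{def:fe} since $\varphi(0)=1$. (Alternatively one could start from the integral representation \cref{eq:deffeint} and insert the explicit $\bu(s)$, but this amounts to the same computation.) I do not expect a genuine obstacle: the only points deserving care are the term-by-term integration of the matrix exponential series and the remark that $e^{\eta A_F}$, $\varphi(\eta A_F)$ and $A_F$ all commute, being polynomials/power series in $A_F$.
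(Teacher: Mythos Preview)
Your argument is correct and complete: solving the linear inhomogeneous auxiliary ODE via Duhamel's formula, identifying $\int_0^\eta e^{\sigma A_F}\,\mathrm{d}\sigma=\eta\,\varphi(\eta A_F)$ through the power series, and then factoring $\varphi(\eta A_F)$ out of both terms is exactly the natural route. Note, however, that the paper does not supply its own proof of this lemma --- it simply cites \cite{AGR20} --- so there is nothing to compare against in the present manuscript; your write-up would serve as a self-contained proof should one be desired.
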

In \cref{eq:deffephi}, the $\varphi(\eta A_F)$ term has a damping effect on $\bf$ owing to the negative definiteness of the matrix $A_F$, and the exponential-like behaviour of $\varphi(z)$. In fact, $\varphi(z)$ satisfies $\lim_{z\to -\infty}\varphi(z)=0$ and $\varphi(z)\in (0,1)$ for all $z<0$. The free parameter $\eta$ in \cref{eq:deffephi} can be used to tune this damping effect. Let $\rhoe$, and $\rhos$ be the spectral radii of $\fe$, and $\fs$, respectively. It was shown in \cite{AGR20} that $\rhoe\leq\rhos$ already holds for $\eta$ relatively small, and therefore the stiffness of \cref{eq:modeq} does not depend on $\ff$ anymore, but solely on $\fs$. In \cite{AGR20}, the authors also proved that $\ye$ is an $O(\eta)$ approximation of $\by$ and that, in some cases, $\fe$ inherits the contractivity properties of $\bf$.

\paragraph{The mRKC scheme}
\mbox{}\\
The multirate RKC scheme is nothing else than an $s$-stage RKC scheme applied to \cref{eq:modeq}, with $s$ depending solely on $\rhos$, the spectral radius of $\fs$. Whenever $\fe$ must be evaluated, it is approximated by solving the auxiliary problem \cref{eq:defu} with an $m$-stage RKC method, where $m$ depends on $\rhof$, the spectral radius of $\ff$. However, integration of \cref{eq:defu} is cheap since $\fs$ is frozen at the initial value. The tuning parameter $\eta$ is chosen so that the approximation to $\fe$ is less stiff than $\fs$, and thus the $s$-stage RKC scheme remains stable. More precisely, the number of stages $s,m$ are the smallest integers satisfying
\begin{equation}\label{eq:defsmeta}
\Dt\rhos \leq \beta s^2, \qquad \eta\rhof \leq  \beta m^2, \qquad\text{with}  \qquad  
\eta = \frac{6\Dt}{\beta s^2} \frac{m^2}{m^2-1}
\end{equation}
and $\beta = 2-4\eps/3$ (see \cref{eq:defbetas}).
One step of the mRKC scheme is then given by
\begin{equation}\label{eq:mRKC}
\begin{dcases}
\bd_0 = \bm{0}, \quad \bd_1=\mu_1\Dt\bfe(\by^n),\\
\bd_j = \nu_j\bd_{j-1}+\kappa_j\bd_{j-2}+\mu_j\Dt \bfe(\by^n+\bd_{j-1}) \quad j=2,\ldots,s,\\
\by^{n+1}= \by^n+\bd_s,
\end{dcases}
\end{equation}
where the parameters $\mu_j,\nu_j,\kappa_j$ are those of the RKC1 scheme defined in \cref{eq:coeffRKC,eq:coeffRKC1} and $\bfe(\by)$ is
a numerical approximation to $\fe(\by)=(\bu(\eta)-\by)/\eta$, cf.~\eqref{eq:deffedif}. Hence, in the mRKC scheme \cref{eq:mRKC}, at each evaluation of $\bfe(\by^n+\bd_j)$ an approximation of $(\bu(\eta)-\by)/\eta$ is computed, with $\bu(\eta)$ as in \cref{eq:defu}, and $\by=\by^n+\bd_j$. This is performed by integrating \cref{eq:defu} with one $m$-stage RKC step of size $\eta$ in which each stage is divided by $\eta$ itself:
\begin{equation}\label{eq:defbfe}
\begin{dcases}
\bh_0 = \bm{0}, \quad  \bh_1 = \alpha_1(\ff(\by)+\fs(\by)),\\
\bh_j = \beta_j \bh_{j-1}+\gamma_j \bh_{j-2}+\alpha_j (\ff(\by+\eta \bh_{j-1})+\fs(\by)) \quad j=2,\ldots,m,\\
\bfe(\by) = \bh_m.
\end{dcases}
\end{equation}
Here, the parameters $\alpha_j,\beta_j,\gamma_j$ of the $m$-stage RKC scheme \eqref{eq:defbfe} are given by \cite{AGR20}
\begin{align}\label{eq:defv01}
\vz&=1+\varepsilon/m^2, & \vu&= T_m(\vz)/T_m'(\vu), & a_j&= 1/T_j(\vz) & \mbox{ for }j=0,\ldots,m&
\end{align}
and $\alpha_1 = \vu/\vz$,
\begin{align}\label{eq:defabg} 
\alpha_j&= 2\vu  a_j/a_{j-1}, & 
\beta_j&= 2\vz a_j/a_{j-1},    & 
\gamma_j&=-a_j/a_{j-2}  &
\text{for }j&=2,\ldots,m.
\end{align}
To compute $m$ and $\eta$ in \eqref{eq:defsmeta}, we insert $\eta=6\Dt m^2/(\beta s^2(m^2-1))$ into $\eta\rhof\leq \beta m^2$, and first compute $m$, then $\eta$.
The mRKC method is given by \eqref{eq:defsmeta}--\eqref{eq:defbfe} and its stability and first-order accuracy were proved in \cite{AGR20}.

\subsection{The mixed-precision multirate RKC method}\label{sec:mpmRKC}
Roughly speaking, the mRKC scheme \eqref{eq:defsmeta}--\eqref{eq:defbfe} is obtained by applying an RKC1 scheme to \cref{eq:modeq} and a second RKC1 scheme to \cref{eq:defu} whenever the right-hand side needs to be evaluated. In our mixed-precision mRKC scheme we instead apply the \moda{mixed-precision RKC1 method \cref{eq:1-order-preserving-scheme} to \cref{eq:modeq} and \cref{eq:defu}.} The resulting method then only requires one evaluation of $\ff$ and $\fs$ in high precision (per timestep), with all the subsequent evaluations performed in low precision. We now present our mixed-precision mRKC scheme, and we analyze its accuracy in \cref{sec:convmpmRKC}.

\paragraph{The mixed-precision mRKC scheme}
\mbox{}\\
Let $s,m$ and $\eta$ be as in \cref{eq:defsmeta}. One step of the mixed-precision mRKC scheme is given by
\begin{equation}\label{eq:mpmRKC}
\begin{dcases}
\hbd_0 = \bm{0}, \quad \hbd_1=\mu_1\Dt\tbfe(\hby^n),\\
\hbd_j = \nu_j\hbd_{j-1}+\kappa_j\hbd_{j-2}+\mu_j\Dt( \tbfe(\hby^n)+\hat\Delta\fe[j-1]), \quad j=2,\ldots,s,\\
\hby^{n+1}= \hby^n+\hbd_s,
\end{dcases}
\end{equation}
where $\tbfe(\by)$ is given by
\begin{equation}\label{eq:deftbfe}
\begin{dcases}
\tbh_0 = \bm{0}, \quad  \tbh_1 = \alpha_1(\ff(\by)+\fs(\by)),\\
\tbh_j = \beta_j \tbh_{j-1}+\gamma_j \tbh_{j-2}+\alpha_j (\ff(\by)+\fs(\by)+\hat\Delta\ff[j-1]), \quad j=2,\ldots,m,\\
\tbfe(\by) = \tbh_m.
\end{dcases}
\end{equation}
The $\hat\Delta\ff[j]$ are computed in low precision and must satisfy
\begin{equation}\label{eq:requirementff}
\hat\Delta\ff[j] = \ff(\by+\eta \tbh_{j})-\ff(\by)+\moda{O(\epsilon\eta)},
\end{equation}
as $\eta\to 0$, where $\epsilon\geq0$ is a small constant. For the evaluation of $\hat\Delta\ff[j]$ we can again employ the techniques described in \cref{sec:mpRKC}, with $\bf$ and $\Dt$ replaced by $\ff$ and $\eta$, respectively. The low-precision $\{\hat\Delta\fe[j]\}_{j=1}^{s-1}$ terms in \cref{eq:mpmRKC} must again satisfy (cf. \cref{eq:requirementRKC1})
\begin{equation}\label{eq:requirementmRKC}
\hat\Delta\fe[j] = \Delta\fe[j]+\moda{O(\epsilon\Dt)},\quad\forall j,
\end{equation}
where $\Delta\fe[j]=\bfe(\hby^n+\hbd_j)-\bfe(\hby^n)$. Again, we can employ the strategies from \cref{sec:mpRKC}. For instance, one can use automatic differentiation or alternatively define
\begin{equation}\label{eq:defhDfe}
\hat\Delta\fe[j] = \delta^{-1}\left( \hbfe(\hby^n+\delta\,\hbd_j)-\tbfe(\hby^n)  \right), \qquad \delta = \frac{\sqrt{u}}{\Dt}
\end{equation}
with $\hbfe(\by)$ given by
\begin{equation}\label{eq:defhbfe}
\begin{dcases}
\hbh_0 = \bm{0}, \quad  \hbh_1 = \alpha_1(\hff(\by)+\hfs(\by)),\\
\hbh_j = \beta_j \hbh_{j-1}+\gamma_j \hbh_{j-2}+\alpha_j (\hff(\by+\eta\hbh_{j-1})+\hfs(\by)) \quad j=2,\ldots,m,\\
\hbfe(\by) = \hbh_m.
\end{dcases}
\end{equation}
We prove in \cref{lemma:hatDeltafe} that if $\Dt\leq\sqrt{u}$ then $\hat\Delta\fe[j]$ defined as in \cref{eq:defhDfe,eq:defhbfe} satisfies \cref{eq:requirementmRKC} with $\epsilon=\sqrt{u}$. Condition $\Dt\leq\sqrt{u}$ is very weak since the method is intended to be used when $\Dt$ is smaller or proportional to $u\ll\sqrt{u}$.
Note that the difference between $\tbfe$ and $\hbfe$ is that in \cref{eq:deftbfe} the functions $\ff,\fs$ are evaluated once in high precision while in \cref{eq:defhbfe} they are always evaluated in low precision. Hence, $\hbfe$ is a simple low-precision evaluation of $\bfe$ (compare \cref{eq:defhbfe,eq:defbfe}), while $\tbfe$ has the lowest-order term evaluated in high precision (exactly under Assumption \ref{assumption:high_precision_exact}). Again, we remark that our mixed-precision mRKC scheme only needs one evaluation of $\ff$, and $\fs$ in high precision per timestep.

\begin{remark}\label{rem:skmrock}
	\modc{The mixed-precision mRKC scheme can be extended as well to SDEs. It is sufficient to apply the same approach explained in \cref{rem:skrock} to the multirate method SK-mROCK in \cite{AbR22b}.}
\end{remark}

\subsection{Convergence analysis}\label{sec:convmpmRKC}
We compute here the Taylor expansion of the mixed-precision mRKC scheme, as we did in \cref{thm:convmpRKCbis} for the mixed-precision RKC schemes. For the sake of brevity, we omit the convergence analysis in the sense of \cref{thm:accuracympRKC}, and the rounding error propagation analysis of the mixed-precision mRKC scheme. The results are similar as for the mixed-precision RKC1 scheme, only with added technicalities in the proofs due to the use of embedded methods. Numerically, we observe that the mixed-precision mRKC scheme is more stable than the mixed-precision RKC1 scheme due to the reduced stiffness of the right-hand side, and the decreased number of stages. Therefore, in this section we only prove that the mixed-precision mRKC scheme \cref{eq:mpmRKC,eq:deftbfe,eq:requirementff,eq:requirementmRKC} is first-order preserving by performing a Taylor expansion of the numerical solution. 

In order to prove the main convergence result, \cref{thm:convmpmRKC}, we first need a technical lemma.
\begin{lemma}\label{lemma:errhbfe}
	Let $\by\in\R^n$, $\tbfe$ as in \cref{eq:deftbfe}, and $\bfe$ as in \cref{eq:defbfe}. Then $\tbfe(\by)=\bfe(\by)+\moda{O(\epsilon\eta)}$.
\end{lemma}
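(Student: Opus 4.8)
The plan is to treat the mixed-precision auxiliary scheme \cref{eq:deftbfe} as a stage-perturbed version of the exact auxiliary scheme \cref{eq:defbfe}, and to control the propagation of the perturbations with \cref{lemma:perturbations,lemma:coroll_perturbations}, exactly in the spirit of the proof of \cref{thm:convmpRKCbis}. Concretely, I would set $\be_j\coloneqq\tbh_j-\bh_j$. Subtracting \cref{eq:defbfe} from \cref{eq:deftbfe} and using $\tbh_0=\bh_0=\bm 0$ and $\tbh_1=\bh_1=\alpha_1(\ff(\by)+\fs(\by))$, one gets $\be_0=\be_1=\bm 0$ and, for $j=2,\dots,m$,
\[
\be_j=\beta_j\be_{j-1}+\gamma_j\be_{j-2}+\alpha_j\,\bm{\rho}_{j-1},\qquad \bm{\rho}_{j-1}\coloneqq\ff(\by)+\hat\Delta\ff[j-1]-\ff(\by+\eta\bh_{j-1}),
\]
where $\gamma_j$ here is the three-term coefficient of \cref{eq:defbfe} (the analogue of $\kappa_j$ in \cref{eq:RKC}). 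By \cref{eq:requirementff}, $\hat\Delta\ff[j-1]=\ff(\by+\eta\tbh_{j-1})-\ff(\by)+O(\epsilon\eta)$, so $\bm{\rho}_{j-1}=\ff(\by+\eta\tbh_{j-1})-\ff(\by+\eta\bh_{j-1})+O(\epsilon\eta)$; in particular $\bm{\rho}_1=O(\epsilon\eta)$ since $\tbh_1=\bh_1$.

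Next I would recognise this as precisely the perturbed RKC recursion \cref{eq:rk_perturbations} for the $m$-stage RKC1 auxiliary scheme, with $A=0$, with $(\mu_j,\nu_j,\kappa_j)$ replaced by $(\alpha_j,\beta_j,\gamma_j)$ and $b_j$ by $a_j=1/T_j(\vz)$, and with perturbations $\bm 0,\alpha_2\bm{\rho}_1,\dots,\alpha_m\bm{\rho}_{m-1}$. Because the auxiliary scheme is first order, the $-\mu_j a_{j-1}$ correction of \cref{eq:coeffRKC} vanishes, so the weights in \cref{lemma:coroll_perturbations} reduce to $\mu_j+\gamma_j=\alpha_j$. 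Point \ref{item:perturbations} of \cref{lemma:perturbations} then gives $\be_k=\sum_{j=2}^k\tfrac{a_k}{a_j}U_{k-j}(\vz)\alpha_j\bm{\rho}_{j-1}$, and point \ref{item:boundnorm} of \cref{lemma:coroll_perturbations} (with $A=0$, which is trivially symmetric nonpositive definite and satisfies $\eta\cdot 0\le\beta m^2$) yields, using that the $c_k$ of \cref{eq:defc} for the $m$-stage auxiliary scheme is $\le 1$,
\[
\Vert\be_k\Vert_2\le\max_{1\le j\le k-1}\Vert\bm{\rho}_j\Vert_2,\qquad k=1,\dots,m.
\]

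Finally I would close the estimate. Since $\bf$, hence $\ff$, is twice differentiable, for $\eta$ small the points $\by+\eta\bh_j$ and $\by+\eta\tbh_j$ stay in a fixed neighbourhood of $\by$ on which $\ff$ is Lipschitz with some constant $L$ (for symmetric $\ff'$ one may take $L\le\rhof$), so $\Vert\ff(\by+\eta\tbh_j)-\ff(\by+\eta\bh_j)\Vert_2\le L\eta\Vert\be_j\Vert_2$ and therefore $\Vert\bm{\rho}_j\Vert_2\le L\eta\Vert\be_j\Vert_2+\hat C\epsilon\eta$, with $\hat C$ the constant in \cref{eq:requirementff}. Inserting this into the previous display and setting $E_k\coloneqq\max_{0\le j\le k}\Vert\be_j\Vert_2$ gives $E_k\le(1+L\eta)E_{k-1}+\hat C\epsilon\eta$ with $E_1=0$, which unrolls to
\[
\Vert\tbfe(\by)-\bfe(\by)\Vert_2=\Vert\be_m\Vert_2\le E_m\le\hat C\,\epsilon\,\eta\,\frac{(1+L\eta)^{m-1}-1}{L\eta}=O(\epsilon\eta)\quad\text{as }\eta\to0,
\]
where the bound $L\eta\le\rhof\eta\le\beta m^2$ from \cref{eq:defsmeta} keeps the prefactor $\eta$-independent. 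I expect the only delicate point to be exactly this last step: the bare three-term recursion for $\be_j$ carries an $O(1)$ (in fact $\approx 2$) amplification per stage, so it is essential to bound the whole recursion at once through \cref{lemma:perturbations,lemma:coroll_perturbations} rather than iterate it naively, and to invoke \cref{eq:defsmeta} so the Grönwall-type prefactor does not degrade as $\eta\to0$; the remaining manipulations are routine reindexing.
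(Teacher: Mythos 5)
Your proof is correct for the lemma as stated (an $O(\epsilon\eta)$ bound as $\eta\to 0$), and it begins exactly as the paper's proof does: set $\be_j=\tbh_j-\bh_j$, note $\be_0=\be_1=\bm{0}$, and use \cref{eq:requirementff} to write the stage perturbation as $\ff(\by+\eta\tbh_{j-1})-\ff(\by+\eta\bh_{j-1})+O(\epsilon\eta)$. The divergence is in how that $\ff$-difference is handled. The paper Taylor-expands it as $\eta\ff'(\by)\be_{j-1}+\bm{t}_{j-1}$ with $\Vert\bm{t}_{j-1}\Vert_2=O(\eta^2)$ and keeps the Jacobian term \emph{inside} the three-term recursion, i.e.\ it applies \cref{lemma:perturbations} (and the bounds behind \cref{lemma:coroll_perturbations}) with $A=\ff'(\by)$ and $\eta$ in place of $\Dt$; since $\eta\rhof\le\beta m^2$, the Chebyshev internal-stability bound then gives $\Vert\be_m\Vert_2\lesssim\max_j\Vert\bm{r}_j+\bm{t}_j\Vert_2=O(\epsilon\eta+\eta^2)$ with a constant uniform in $m$ and in the stiffness of $\ff$. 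You instead take $A=0$, push the whole difference into the perturbation, and close with a Lipschitz--Gr\"onwall iteration. That is more elementary and does establish the asymptotic statement, but the resulting prefactor $((1+L\eta)^{m-1}-1)/(L\eta)$ is not uniform: by the very design of mRKC, $\eta$ is chosen so that $L\eta\approx\rhof\eta$ can be as large as $\beta m^2$, in which case your constant grows like $(1+\beta m^2)^{m-2}$. Your closing remark has this backwards: \cref{eq:defsmeta} does not keep the prefactor benign, it is precisely what permits $L\eta=O(m^2)$, and the per-stage factor $1+L\eta$ reintroduces the amplification that the lemmas are designed to eliminate — so outside the strict $\eta\to 0$ regime your bound is far weaker than the paper's, which matters because the paper's analysis aims for constants independent of the number of stages and of the stiffness. (A minor difference in the other direction: the paper's Taylor-based route carries an extra $O(\eta^2)$ term, which your Lipschitz bound absorbs into the linear term.)
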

\begin{proof}
	From \cref{eq:requirementff} we have $\ff(\by)+\hat\Delta\ff[j]=\ff(\by+\eta\tbh_j)+\bm{r}_{j}$ with $\Vert\bm{r}_{j}\Vert_2\leq \moda{\hat C\epsilon\eta}$ and $\hat C$ depending on the definition of $\hat\Delta\ff[j]$. Hence, subtracting \cref{eq:defbfe} from \cref{eq:deftbfe} yields
	\begin{equation}
	\begin{aligned}
	\be_0 &= \bm{0}, \qquad  \be_1 =\bm{0},\\
	\be_j &= \beta_j \be_{j-1}+\gamma_j \be_{j-2}+\alpha_j (\ff(\by+\eta\tbh_{j-1})-\ff(\by+\eta\bh_{j-1})+\bm{r}_{j-1} ) \\
	&=\beta_j \be_{j-1}+\gamma_j \be_{j-2}+\alpha_j\ff'(\by)\eta\be_{j-1} +\alpha_j(\bm{r}_{j-1}+\bm{t}_{j-1}) \quad j=2,\ldots,m,\\
	\end{aligned}
	\end{equation}
	with $\Vert\bm{t}_{j}\Vert_2\leq C(\Vert\bh_j\Vert_2^2+\Vert\tbh_j\Vert_2^2)\eta^2$ and $C$ depending on $\ff''$.
	Using \cref{lemma:perturbations} follows $\tbfe(\by)-\bfe(\by)=\be_m=O(\epsilon\eta+\eta^2)$.
\end{proof}
We are now ready to prove the main theorem, which ensures that our mixed-precision mRKC method is indeed first-order preserving.
\begin{theorem}\label{thm:convmpmRKC}
	The mixed-precision mRKC scheme \cref{eq:mpmRKC,eq:deftbfe,eq:requirementff,eq:requirementmRKC} satisfies
	\begin{equation}\label{eq:locerrmpmRKC}
	\hby^{n+1}=\hby^n+\Dt(\ff(\hby^n)+\fs(\hby^n))+O(\epsilon\Dt^2+\Dt^2).
	\end{equation}
\end{theorem}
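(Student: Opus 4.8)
The plan is to follow the Taylor-expansion argument used in the first part of \cref{thm:convmpRKCbis}, with two extra ingredients specific to the multirate setting: \cref{lemma:errhbfe}, which bounds by $O(\epsilon\eta)$ the error committed inside each evaluation of the mixed-precision averaged force, and the consistency of the inner RKC step, namely $\bfe(\by)=\ff(\by)+\fs(\by)+O(\eta)$ (a Taylor expansion of the auxiliary problem \cref{eq:defu} together with the first-order accuracy of the inner RKC scheme as in \cite{AGR20}; cf.~also \cref{eq:deffeint}). The key preliminary remark is that, by the choice of $\eta$ in \cref{eq:defsmeta}, $\eta=\frac{6\Dt}{\beta s^2}\frac{m^2}{m^2-1}\leq C\Dt$ with $C$ a universal constant (since $s\geq 1$ and $m^2/(m^2-1)\leq 4/3$), so any $O(\,\cdot\,\eta)$ term is simultaneously $O(\,\cdot\,\Dt)$; in particular \cref{lemma:errhbfe} gives $\tbfe(\hby^n)=\bfe(\hby^n)+O(\epsilon\Dt)$, and consistency of $\bfe$ then gives $\tbfe(\hby^n)=\ff(\hby^n)+\fs(\hby^n)+O(\epsilon\Dt+\Dt)$.

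The next step is to show $\hbd_j=O(\Dt)$ for all $j$ by induction: $\hbd_1=\mu_1\Dt\tbfe(\hby^n)=O(\Dt)$, and if $\hbd_{j-1}=O(\Dt)$ then smoothness of $\bfe$ gives $\Delta\fe[j-1]=\bfe(\hby^n+\hbd_{j-1})-\bfe(\hby^n)=O(\Dt)$, hence $\hat\Delta\fe[j-1]=\Delta\fe[j-1]+O(\epsilon\Dt)=O(\Dt)$ by \cref{eq:requirementmRKC}, and the recursion \cref{eq:mpmRKC} exhibits $\hbd_j$ as a finite sum of $O(\Dt)$ vectors. Applying point \ref{item:perturbations} of \cref{lemma:perturbations} to \cref{eq:mpmRKC} with $A=0$ and $\bm{r}_j=\mu_j\Dt(\tbfe(\hby^n)+\hat\Delta\fe[j-1])$ (with $\hat\Delta\fe[0]=\bm{0}$ and $\gamma_j=0$ for the RKC1 coefficients) then yields
\[
\hbd_s=\Big(\sum_{j=1}^s\frac{b_s}{b_j}U_{s-j}(\oz)\mu_j\Big)\Dt\,\tbfe(\hby^n)+\Dt\sum_{j=1}^s\frac{b_s}{b_j}U_{s-j}(\oz)\mu_j\,\hat\Delta\fe[j-1].
\]
Here the bracketed prefactor equals $\overline{R}_s(0)=R_s'(0)=c_s=1$, while the second sum is bounded, by point \ref{item:boundnormbis} of \cref{lemma:coroll_perturbations} (with $p=1$, so $C_s=c_s\leq 1$), by $\Dt\,C_s\max_j\Vert\hat\Delta\fe[j-1]\Vert_2$, which is $O(\Dt^2+\epsilon\Dt^2)$ because $\hat\Delta\fe[j-1]=\Delta\fe[j-1]+O(\epsilon\Dt)$ and $\Delta\fe[j-1]=O(\Dt)$.

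Putting these together gives $\hby^{n+1}=\hby^n+\hbd_s=\hby^n+\Dt\,\tbfe(\hby^n)+O(\epsilon\Dt^2+\Dt^2)$, and replacing $\tbfe(\hby^n)$ by $\ff(\hby^n)+\fs(\hby^n)+O(\epsilon\Dt+\Dt)$ from the first paragraph converts the $\Dt\,\tbfe(\hby^n)$ term into $\Dt(\ff(\hby^n)+\fs(\hby^n))+O(\epsilon\Dt^2+\Dt^2)$, which is exactly \cref{eq:locerrmpmRKC}. The only real difficulty is bookkeeping: one must track the two distinct low-precision error sources — the $O(\epsilon\eta)$ error incurred inside $\tbfe$ (controlled by \cref{lemma:errhbfe}, which itself rests on \cref{eq:requirementff} and a linearization of the inner scheme) and the $O(\epsilon\Dt)$ error in the $\hat\Delta\fe[j]$ terms (from \cref{eq:requirementmRKC}) — and check that each only ever enters multiplied by a further factor of $\Dt$; this in turn relies on the identity $c_s=1$ and on the $s$- and stiffness-independent bounds of \cref{lemma:coroll_perturbations}. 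All remaining manipulations are the same Chebyshev-polynomial algebra already carried out in \cref{thm:convmpRKCbis}.
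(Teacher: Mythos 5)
Your proposal is correct and follows essentially the same route as the paper: apply \cref{lemma:perturbations} with $A=0$ to \cref{eq:mpmRKC}, use $\overline{R}_s(0)=c_s=1$ to extract the $\Dt\,\tbfe(\hby^n)$ term, bound the remaining sum of $\hat\Delta\fe[j]$ terms via \cref{lemma:coroll_perturbations}, and replace $\tbfe(\hby^n)$ by $\ff(\hby^n)+\fs(\hby^n)+O(\epsilon\eta+\eta)$ using \cref{lemma:errhbfe} and the consistency of $\bfe$ from \cite{AGR20}, concluding with $\eta=O(\Dt)$. The only differences are cosmetic: you make explicit the induction showing $\hbd_j=O(\Dt)$ and the bound $\eta\leq C\Dt$, both of which the paper leaves implicit (it simply invokes $\eta\leq 8\Dt$).
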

\begin{proof}
	We proceed similarly as in \cref{thm:convmpRKCbis}. By applying \cref{lemma:perturbations} to \cref{eq:mpmRKC}, with \moda{$A=0$} and $\bm{r}_{j}=\Dt(\tbfe(\hby^n)+\hat\Delta\fe[j-1])$ for $j=1,\ldots,s$ (with $\hat\Delta\fe[0]=\bm{0}$), we obtain
	\begin{equation}
	\begin{aligned}
	\hby^{n+1}
	&= \hby^n + \sum_{j=1}^s\frac{b_s}{b_j}U_{s-j}(\oz)\mu_j\Dt(\tbfe(\hby^n)+\hat\Delta\fe[j-1])
	=\hby^n +\Dt\tbfe(\hby^n)+\Dt\sum_{j=1}^s\frac{b_s}{b_j}U_{s-j}(\oz)\mu_j\hat\Delta\fe[j-1]\\
	&=\hby^n+\Dt(\ff(\hby^n)+\fs(\hby^n))+\moda{O( \epsilon\eta\Dt+\eta\Dt +\Dt^2+\epsilon\Dt^2)},
	\end{aligned}
	\end{equation}
	where we used \cref{lemma:errhbfe}, the relation $\bfe(\by)=\ff(\by)+\fs(\by)+O(\eta)$ \cite{AGR20}, and we applied \cref{lemma:perturbations} \ref{item:boundnorm} to the last summation. We conclude using the fact that $\eta\leq 8\Dt$ (usually $\eta\ll\Dt$).
\end{proof}

\section{Numerical experiments}
\label{sec:numerical_results}
In this section we test the algorithms and theory presented in the paper. We will often compare our order-preserving mixed-precision methods to some more na\"ive mixed-precision implementations that perform all function evaluations in low precision and only vector sums and multiplications in high precision; therefore  these schemes do not converge (not even under Assumption \ref{assumption:high_precision_exact}, cf.~\cref{def:order-preserving}). In this section we will refer to these na\"ive schemes as not order-preserving or as ``standard'' mixed-precision schemes.

\subsection{Test problems and computational setup}

\subsubsection{Problem 1: Nonlinear reaction-diffusion equation}
Problem 1 is a standard nonlinear reaction-diffusion equation in $d$-dimensions with Dirichlet boundary conditions:
\begin{align}
\label{eq:reaction-diffusion}
\left\{\begin{array}{llc}
\dot{\uu}(t,\bm{x}) = \mathcal{D}\Delta \uu - h(\uu) + f_1(\bm{x}), & \bm{x}\in D=(0,1)^d, & t\in[0,T],\\
\uu(0,\bm{x}) = 1 & \bm{x}\in D=(0,1)^d, & \\
\uu(t,\bm{x}) = 1 & \bm{x} \in \partial D, & t\in[0,T],
\end{array}\right.
\end{align}
where $d\in\{1,2,3\}$, $T=1$, $\mathcal{D}=100$, $h(\uu) = \uu^2$, and $f_1(x)$ is chosen so that the exact solutions in 1D, 2D, and 3D at steady-state are
\begin{gather}
\uu_{1D}(\infty,x)      = (4\,x(1-x))^2 + 1,\quad
\uu_{2D}(\infty,\bm{x}) = (16\,xy(1-x)(1-y))^2 + 1,\\
\uu_{3D}(\infty,\bm{x}) = (64\,xyz(1-x)(1-y)(1-z))^2 + 1.\notag
\end{gather}
We pick $D_h$, the mesh of $D$, to be uniform with $d!N^d$ cells, where $N\in\N$, to be given later.

\subsubsection{Problem 2: Heat equation on a graded L-shaped domain}
Problem 2 is the classic heat equation on an L-shaped 2D domain $D_L$ with a near-singular forcing term:
\begin{align}
\label{eq:L-shaped}
\left\{\begin{array}{llc}
\dot{\uu}(t,\bm{x}) = \Delta \uu + f_2(\bm{x}), & \bm{x}\in D_L, & t\in[0,T],\\
\uu(0,\bm{x}) = 1 & \bm{x}\in D_L, & \\
\uu(t,\bm{x}) = 1 & \bm{x} \in \partial D_L, & t\in[0,T].
\end{array}\right.
\end{align}
Here $T=1$, $f_2(x)=-10\log(\theta(x,y))$ where $\theta(x,y) = 2((x-0.501)^2 + (y-0.501)^2)$, and $D_L$ is the polygon delimited by the points $\{(0, 0)$, $(1, 0)$, $(1, 0.5)$, $(0.5, 0.5)$, $(0.5, 1)$, $(0, 1)\}$. We take $D_{L}^h$, the mesh of $D_L$, to be unstructured and graded near the re-entrant corner $(0.5,0.5)$. More specifically, $D_L^h$ is constructed so that the size of its cells is roughly given by $N^{-3/2} + N^{-1}\left(1 - \exp(-20\log(2)\theta(x,y))\right)$, where the value of $N\in\N$ will be given later. We will use Problem 2 to test the multirate RKC method with degrees-of-freedom splitting presented in \cite{AGR20}. In this case we split the matrix $A$ into $A=A_F + A_S$, ($\ff=A_F$, $\fs=A_S$, cf.~\cref{sec:mRKC}) so that its stiff part $A_F$ is given by the degrees of freedom with coordinates satisfying $\theta(x,y) < 1/50$. The mesh used for this problem and the degrees-of-freedom splitting is shown in Figure \ref{fig:1} (left), shown later.

\subsubsection{Problem 3: Brussellator model}
Problem 3 is the 1D Brussellator PDE model from Chapter IV.I of the book by Hairer and Wanner \cite{HairerWanner1996}:
\begin{align}
\label{eq:brussellator}
\left\{\begin{array}{llc}
\dot{\uu}(t,x) = \alpha\Delta\uu + \uu^2v - (b+1)\uu + a, & x\in D=(0,1), & t\in[0,T],\\
\dot{v}(t,x) = \alpha\Delta v - \uu^2v + b\uu, & x\in D=(0,1), & t\in[0,T],\\
\uu(t,0) = \uu(t,1) = a,\quad v(t,0) = v(t,1) = b, &
\uu(0,x) = a + \sin(2\pi x),& v(0,x) = b.
\end{array}\right.
\end{align}
Here $T=10$, $a=1$, $b=3$, $\alpha=1/50$, and we use the same unit interval mesh as for Problem 1.

\modb{
	\subsubsection{Problem 4: $p$-Laplace diffusion model}
	Problem 4 is a nonlinear diffusion equation with $4$-Laplace diffusion operator in 1D with Dirichlet boundary conditions:
	\begin{align}
	\label{eq:p-Laplace}
	\left\{\begin{array}{llc}
	\dot{\uu}(t,x) = \nabla\cdot(\Vert\nabla\uu\Vert_2^2 \nabla\uu) + f_4(x), & x\in D=(0,1), & t\in[0,T],\\
	\uu(0,x) = \uu(t,0) = \uu(t,1) = 1 & x\in D=(0,1), & t\in[0,T],
	\end{array}\right.
	\end{align}
	where $f_4(x) = 1 + 64\exp(4-(x(1-x))^{-1})$, and we use the same unit interval mesh as for Problem 1.
}

\subsubsection{Computational setup}
Unfortunately, half precision is still not widely supported on laptop CPUs, including our own. For this reason, in our experiments all low-precision computations are emulated in software via our custom-built C++/Python precision emulator, libchopping \footnote{This code was inspired by Higham and Pranesh's work \cite{HighamPranesh2019} and by Milan Kl\"ower's emulators in Julia \url{https://github.com/milankl?tab=repositories}.} \cite{libchopping}. Number format emulation is extremely expensive and our software relies on vectorization, OpenMP and MPI so as to improve efficiency. Nevertheless, emulated operations are slower than for native formats and we are thus unable to provide actual CPU timings for our algorithms. Consequently, we can only rely on the theoretical estimates of \cref{sec:costanalysis}.

We solve the test problems via the finite element method by using continuous piecewise-linear elements on simplices. We employ the open-source finite element software FEniCS \cite{LoggEtAl2012} for the assembly of the finite element matrices involved, and Python numpy \cite{numpy}, scipy.sparse \cite{scipy} and libchopping \cite{libchopping} linear algebra kernels for the computations. We use mass-lumping to avoid solving the mass-matrix linear system at every timestep, and we take the linear part of the discretized PDE, $A$, to be the stiffness matrix scaled on the left by the inverse lumped mass matrix. So as to better squeeze $A$ into the range of the low-precision format (cf.~Remark \ref{rem:matrix_squeezing}), we divide $A$ by its max norm $||A||_{\max} = \max_{ij}|A_{ij}|$ before rounding it (we multiply back by $||A||_{\max}$ in the high-precision format after each matrix-vector product). 

\begin{remark}[Matrix squeezing]
	\label{rem:matrix_squeezing}
	When doing computations in reduced precision one must be careful about underflow/overflow, especially when working with formats with a small range such as fp16 (cf.~Table \ref{tab:precision}). We remark that there exist matrix-squeezing algorithms \cite{Higham2019MatrixSqueezing} that first rescale and then round a matrix in such a way that the available range is fully exploited. These algorithms typically work by applying a two-sided diagonal scaling to a matrix $A$ so as to obtain a new matrix $\tilde{A}=D_1AD_2$ (here $D_1$, $D_2$ are diagonal matrices) that better fits into the available range. The advantage of working in mixed precision is that it is possible to compute matrix-vector products in low precision using $\tilde{A}$ and then rescale the result back, e.g.~as $(D_1^{-1})\tilde{A}(D_2^{-1}\bm{b})$, where $\tilde{A}$ is applied in low precision and the remaining (linear-cost) operations are performed so that the result is stored in high precision. Similar techniques are also available for nonlinear terms, see e.g.~\cite{klower2021fluid} for an application of these techniques to weather simulation.
\end{remark}

\subsection{Numerical results}

\subsubsection{Stability}\label{sec:numericalstability}
We start by looking at the numerical stability of our mixed-precision methods. As previously mentioned, establishing any theoretical stability result is extremely complicated since rounding errors disrupt both the smoothness of the solution and the spectrum of $A$. For instance, whenever $\Dt A$ has small nonpositive eigenvalues, these can be perturbed by rounding errors and made positive, thus amplifying the error. At the same time, a solution affected by noise due to rounding errors loses its smoothness, which prevents us from obtaining sharp \emph{a priori} error bounds.

We remark that these theoretical issues arise even when computations are performed in high precision. However, we know that when computations are performed in double precision the situation in practice is much different, and numerical methods for ODEs work as they should. We now demonstrate that the same holds in practice for our mixed-precision methods, and that the low-precision computations we use do not impact numerical stability. For this purpose, we take Problem 1 in 2D with $h(\uu),f_1\equiv 0$ (i.e.~the standard heat equation with no forcing), homogeneous Dirichlet boundary conditions, and $\uu(0,\bm{x})=(16\,xy(1-x)(1-y))^2$, and we investigate how the ratio $||\hby^n||_2/||\by^0||_2$ evolves as the mesh size is refined for fixed $\Dt$ for different values of $s$ across a larger timespan of $T=8$. We look at order-preserving mixed-precision RKC implementations, and at a simpler not order-preserving version in which all function evaluations are performed in low precision. \moda{We use schemes \cref{eq:1-order-preserving-scheme,eq:hybrid-scheme}, and employ a double/bfloat16 format combination. We fix $\mathcal{D}=50$, we take $s=2^{5+i}$ and $N=2^{2+i}$, for $i=0,\dots,4$, and we set $\Dt=s^2/\rho$ for RKC1 and $\Dt=\frac{1}{2}\beta^2(s,\frac{2}{13})/\rho$ (cf.~\eqref{eq:defbetas}) for RKC2. Results are shown in \cref{fig:0}. Clearly, both the standard (dotted lines) and the new (dashed lines) mixed-precision algorithms are stable in practice. In fact, they are as stable as the high-precision implementation results (which we are not showing). These results show that our mixed-precision schemes do not seem to impact stability, even for large $s$ and $\Dt$.}

\begin{figure}[h!]
	\centering
	\begin{subfigure}{0.49\textwidth}
		\centering
		\includegraphics[width=0.9\textwidth]{./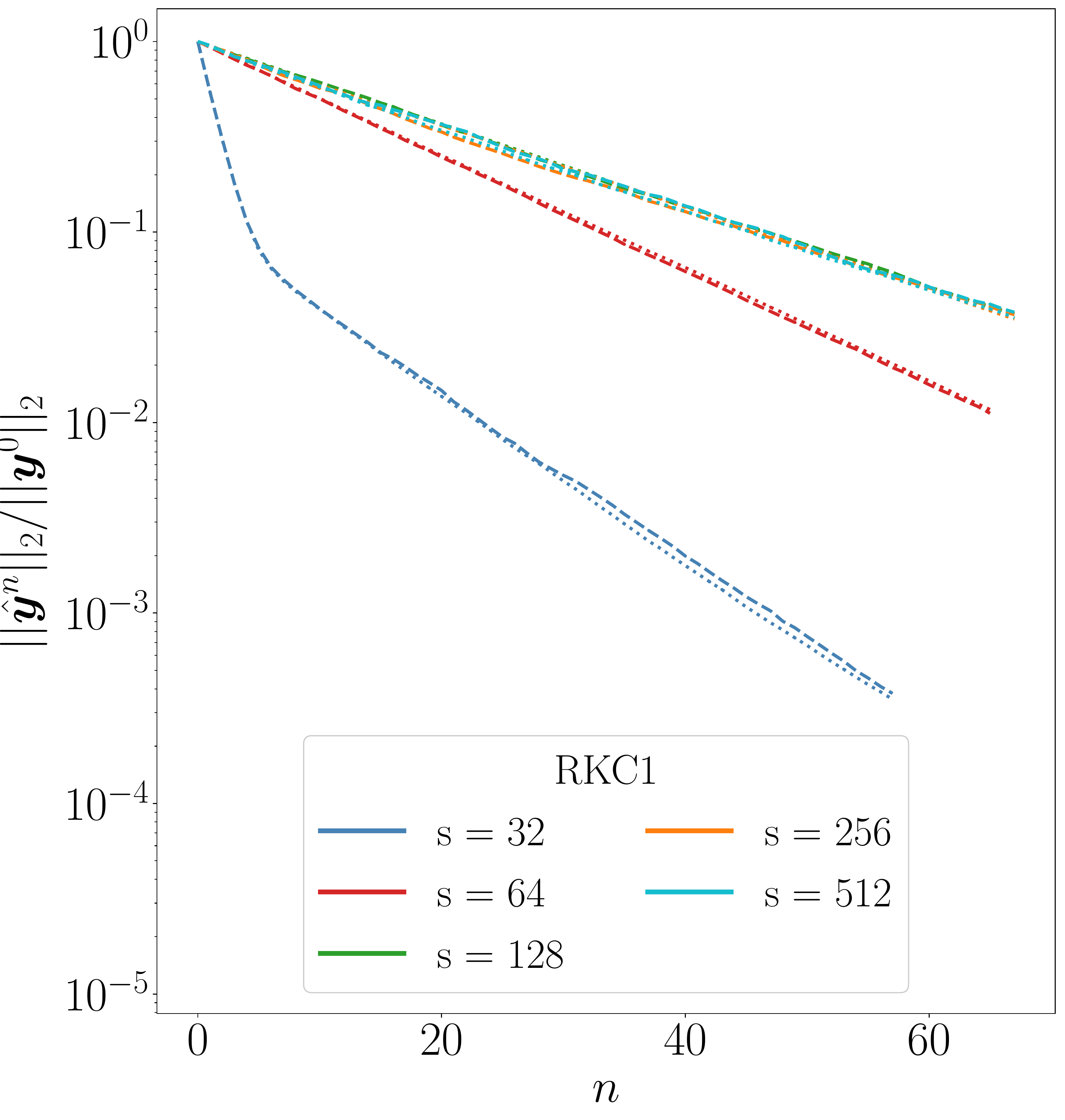}
	\end{subfigure}
	\begin{subfigure}{0.49\textwidth}
		\centering
		\includegraphics[width=0.9\textwidth]{./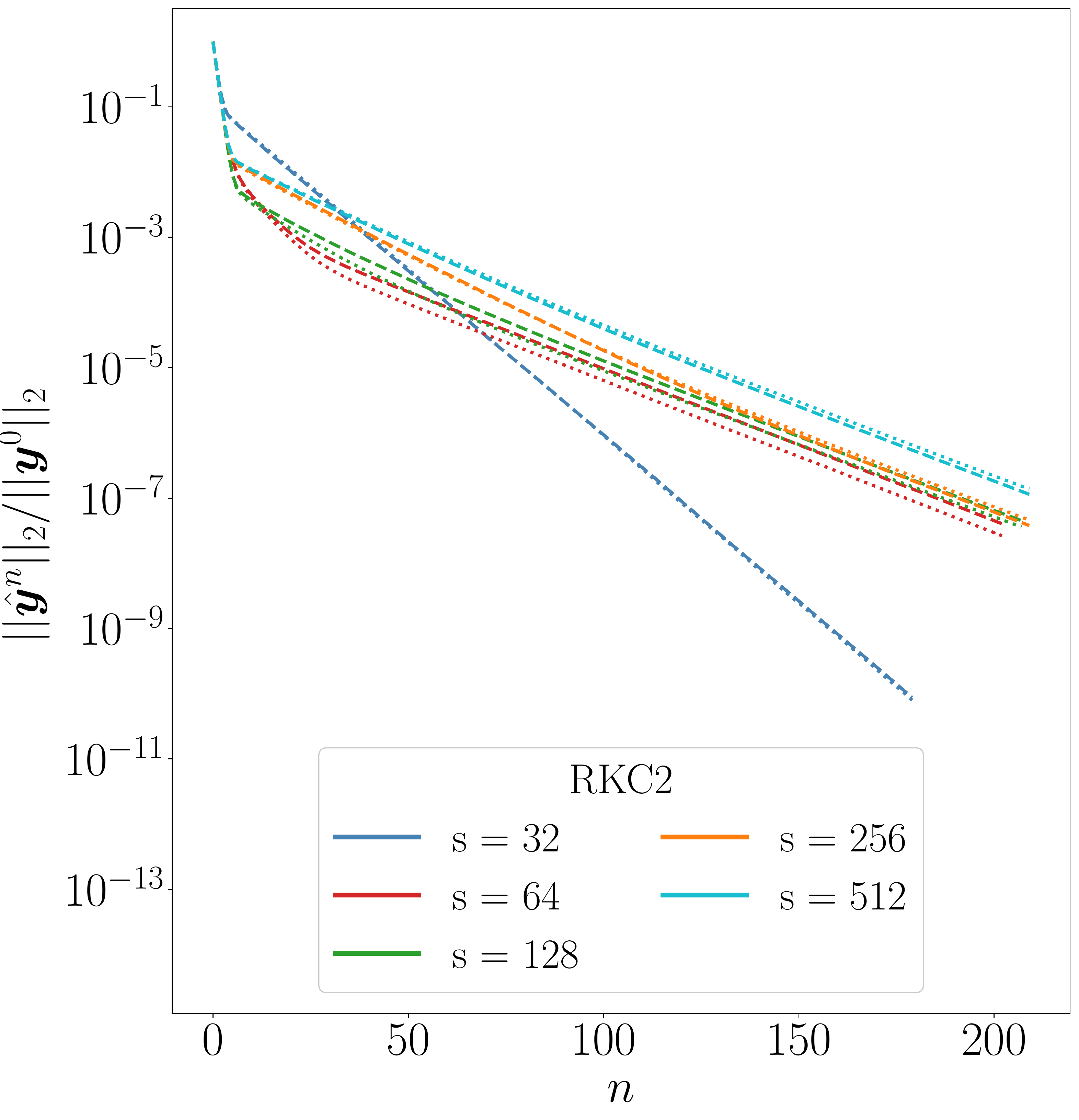}
	\end{subfigure}
	\caption{\textit{Behaviour of the $2$-norm of the numerical solution of the heat equation in mixed precision with RKC vs number of timesteps for different values of $s$. Dotted lines correspond to results obtained using a non-order-preserving implementation, while dashed lines correspond to our mixed-precision algorithms \eqref{eq:1-order-preserving-scheme} and \eqref{eq:hybrid-scheme}. A decaying trend follows the behavior of the true solution of the PDE and denotes stability.}}
	\label{fig:0}
\end{figure}

\begin{figure}[h!]
	\centering
	\begin{subfigure}{0.49\textwidth}
		\centering
		\includegraphics[width=0.9\textwidth]{./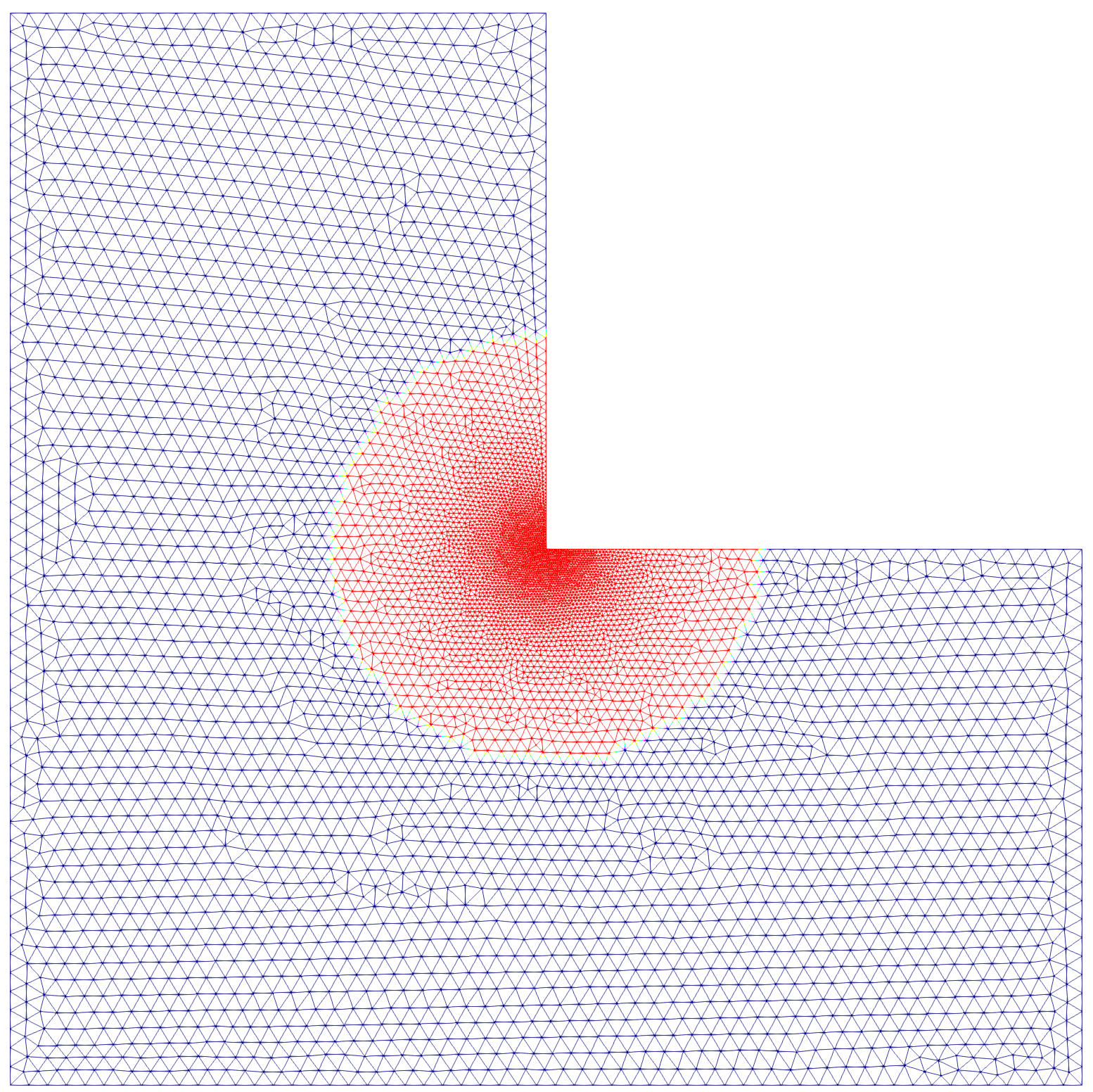}
	\end{subfigure}
	\begin{subfigure}{0.49\textwidth}
		\centering
		\includegraphics[width=0.9\textwidth]{./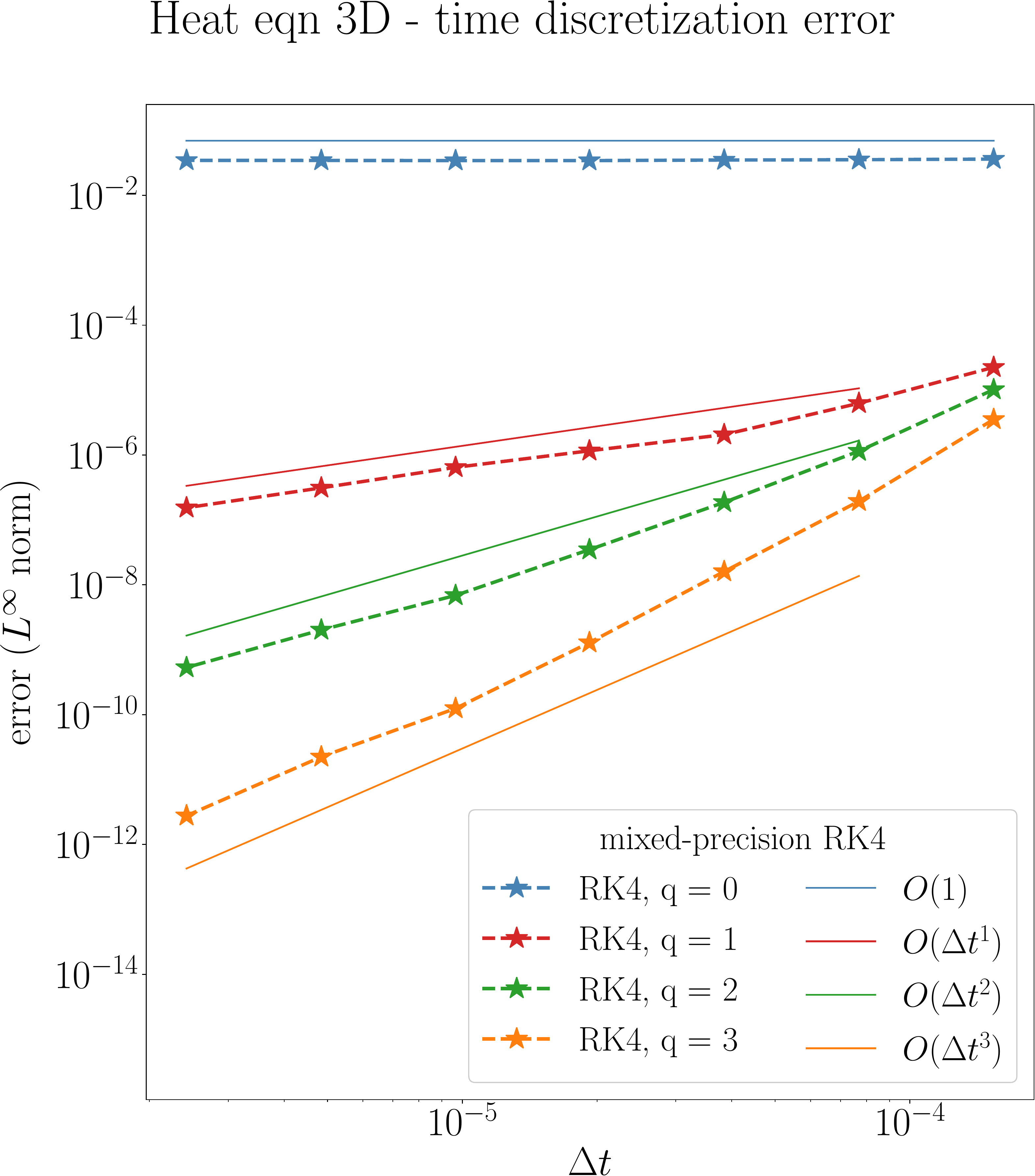}
	\end{subfigure}
	\caption{\textit{On the left, the graded mesh used for Problem 2. For this problem, the stiff part of $A$ is given by the entries corresponding to the dofs near the re-entrant corner (colored in red). On the right, the convergence behaviour of the $q$-order preserving mixed-precision RK4 for the nonlinear heat equation in 3D as $q$ varies.}}
	\label{fig:1}
\end{figure}

\subsubsection{Convergence}
In order to sanitize our results from spatial discretization errors we compare the numerical solutions $\hat{\uu}^n_h\approx \uu|_{t=n\Delta t}$ and $\hat{v}^n_h\approx v|_{t=n\Delta t}$ against the much more accurate solutions $\bar{\uu}^n_h$ and $\bar{v}^n_h$ obtained by using the same spatial discretization, but in exact arithmetic and with the classic fourth-order method RK4 with a much smaller timestep $\Delta t_{\text{ref}} = \min(2\rho^{-1}, \Delta t/4)$.

We first verify that our methods are indeed order-preserving by estimating what their order of convergence is in practice. For this purpose, we take the maximum $L^\infty$ norm over time, defined as (for Problems 1, 2, and 3 respectively)
\begin{align}
\label{eq:error_measures}
\max_{n}||\hat{\uu}^n_h - \bar{\uu}^n_h||_{L^\infty(D)},\qquad \max_{n}||\hat{\uu}^n_h - \bar{\uu}^n_h||_{L^\infty(D_L)},\qquad \max_{n}\max\left(||\hat{\uu}^n_h - \bar{\uu}^n_h||_{L^\infty(D)},\ ||\hat{v}^n_h - \bar{v}^n_h||_{L^\infty(D)}\right).
\end{align}
We also consider relative errors computed by dividing the quantities in \eqref{eq:error_measures} by the roundoff unit $u$ of the low-precision format.

\paragraph{Linear problems}
We begin by considering a linear problem and investigating the effect of changing $q$, the number of high-precision matvecs. For this purpose, we take Problem 1 with $h(\uu)\equiv0$ (i.e.~the standard heat equation) in 3D with $N=2^5$ (i.e.~a mesh of $196608$ tetrahedra), which we solve using the $q$-order-preserving RK4 method constructed following \eqref{eq:q-order-pres-RK}. We choose $q\in\{0,1,2,3\}$ and we show the results in Figure \ref{fig:1} (right). As we can see, taking $q$ high-precision matrix-vector products as in \eqref{eq:q-order-pres-RK} is sufficient to recover $q$-th order convergence. Furthermore, for $q>0$ we can distinguish a pre-asymptotic regime in which the mixed-precision method converges with full order (i.e.~$4$th) before tailing off to the lower rate. The length of this regime appears to be growing with $q$. This suggests that even when $q\geq 1$ is much smaller than the full order $p$ it is still be possible to reduce the error significantly. Indeed we observe that even with only $q=1$ the error is already reduced by up to $4$ orders of magnitude with respect to the non-order preserving method.

\begin{remark}
	In some scenarios an $O(u)$ limiting accuracy is sufficient. However, the limiting error constant is problem-dependent, and in the worst-case it might be comparable to $u^{-1}$. In this case we suggest that a simple 1-order-preserving mixed-precision method would be enough to avoid losing all accuracy. We remark that for some problems it is possible to ensure near-$O(u)$ limiting accuracy without ever resorting to higher precision. Techniques such as compensated summation \cite{higham1993accuracy,klower2021fluid} or stochastic rounding \cite{croci2021stochastic,CrociGilesSR2020,ConnollyHighamMary2020} can be used for this purpose.
\end{remark}

\paragraph{Nonlinear problems}
We now solve the nonlinear Problems \moda{1-4} with our methods and estimate their empirical convergence order. We consider the order preserving RKC schemes \eqref{eq:1-order-preserving-scheme} and \eqref{eq:hybrid-scheme} with $s=16$ \moda{(Problems 1-3), and $s=32$ (Problem 4),} and the nonlinear terms evaluated according to Scenario 1 (high-precision evaluations of $\bm{g}$) and Scenario 2 (low-precision evaluations of the Jacobian), and the order-preserving multirate RKC method \eqref{eq:mpmRKC}. \moda{Problem 4 does not have a linear term, and we therefore only evaluate the Jacobian according to Scenario 2, and we do not use the multirate RKC scheme for this problem.} We investigate the behaviour of the time-discretization error as the timestep is refined by taking bfloat16 and double precision as the low- and high-precision formats respectively. With these methods and formats, we solve Problem 1 in 2D with $N=2^6$ (Figure \ref{fig:2}), Problem 2 with  $N=2^5$ (Figure \ref{fig:3}), Problem 3 with $N=2^6$ (Figure \ref{fig:4})\moda{, and Problem 4 with $N=2^5$ (Figure \ref{fig:4bis})}. In Figures \ref{fig:2}, \ref{fig:3}, \ref{fig:4}\moda{, and \ref{fig:4bis},} we plot the relative error (the error measures in \eqref{eq:error_measures} divided by $u$) versus $\Delta t$. We note how the order-preserving schemes successfully ensure that the full order of the method is preserved even when almost all function evaluations are performed in low precision. On the other hand, we observe that the error of the non order-preserving schemes stagnates at roughly $10u$ (two digits of accuracy) for Problems 1 and 2, $100u$ (less than one digit of accuracy!) for Problem 3\moda{, and $0.1u$ for Problem 4}. The order-preserving methods are up to 2-8 orders of magnitude more accurate.

\begin{figure}[h!]
	\begin{subfigure}{0.32\textwidth}
		\centering
		\includegraphics[width=\textwidth]{./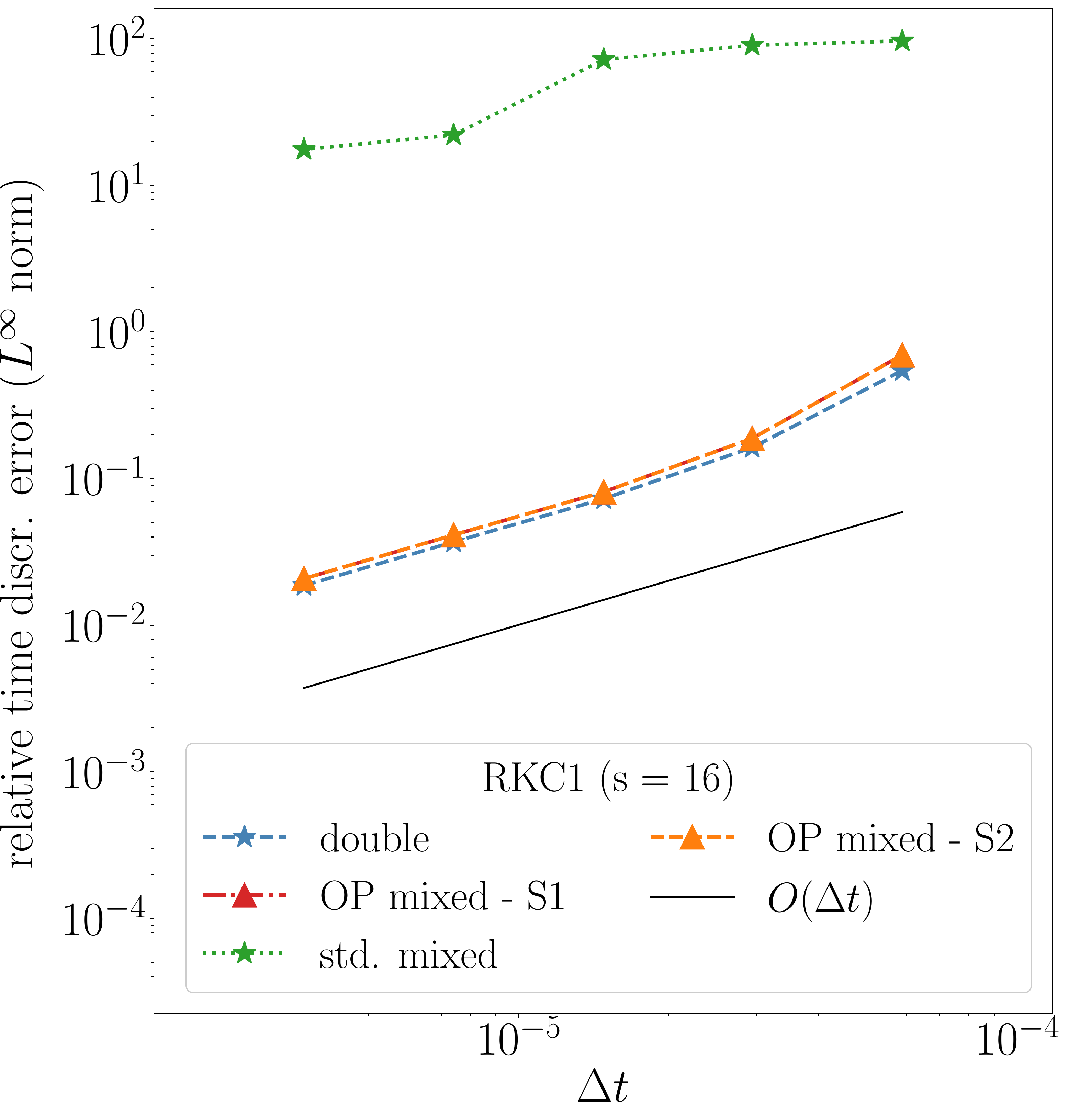}
	\end{subfigure}
	\begin{subfigure}{0.32\textwidth}
		\centering
		\includegraphics[width=\textwidth]{./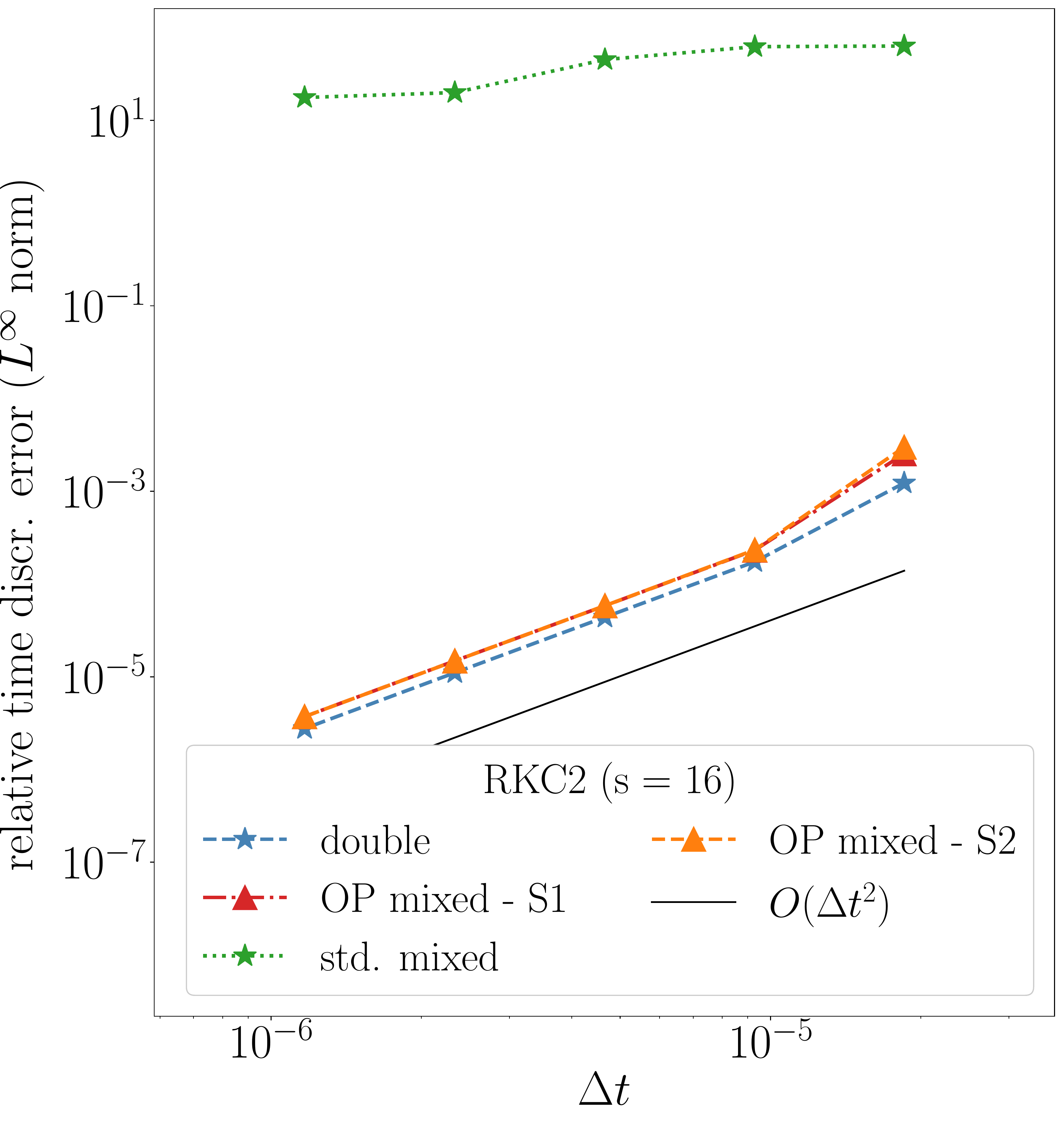}
	\end{subfigure}
	\begin{subfigure}{0.32\textwidth}
		\centering
		\includegraphics[width=\textwidth]{./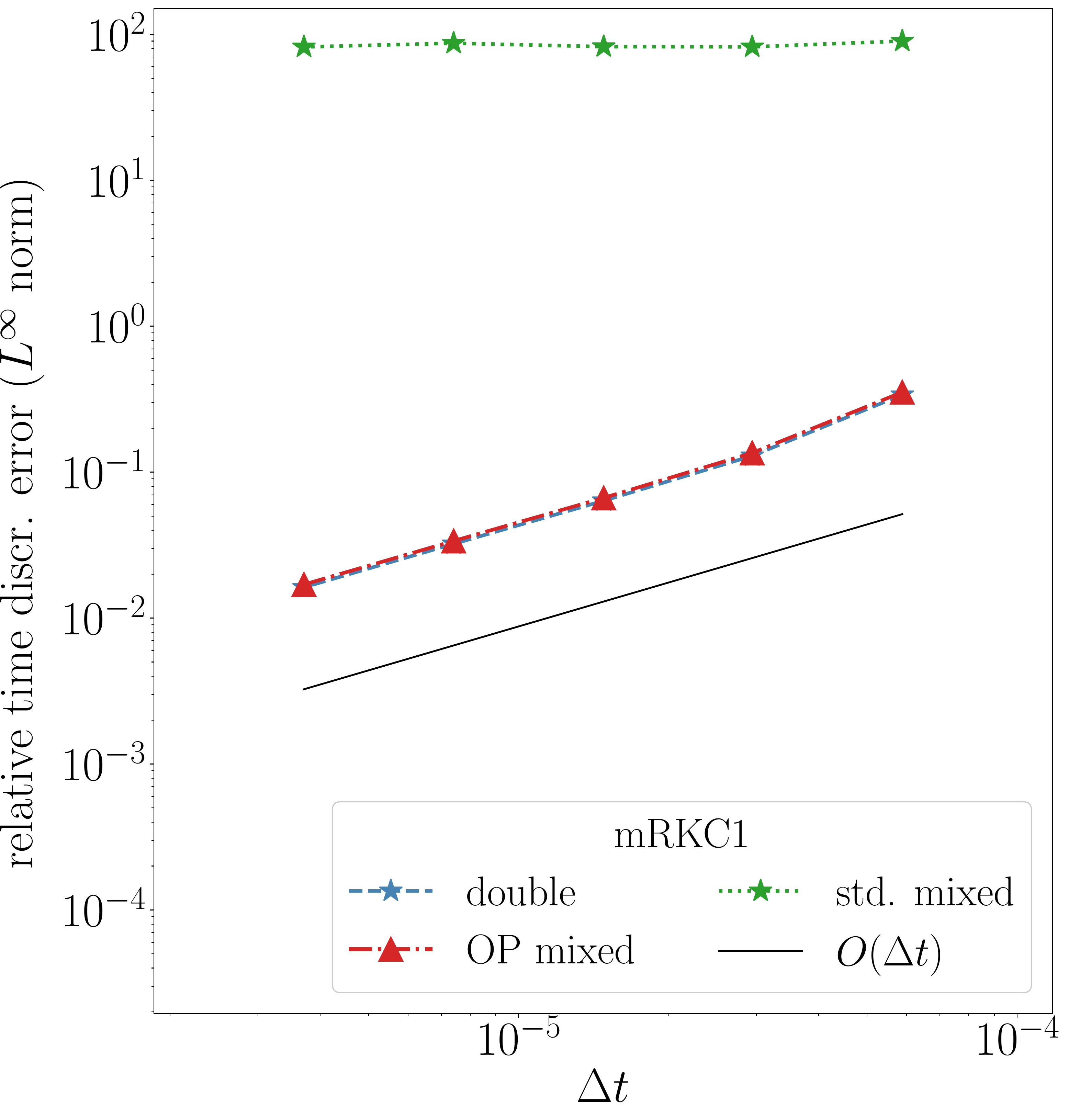}
	\end{subfigure}
	\caption{\textit{Mixed-precision RKC: convergence for the nonlinear heat equation in 2D. ``OP'' stands for order-preserving and S1 and S2 stand for Scenario 1 and 2 respectively, while ``std.~mixed'' indicates a standard non-order preserving mixed-precision implementation.}}
	\label{fig:2}
\end{figure}

\begin{figure}[h!]
	\begin{subfigure}{0.32\textwidth}
		\centering
		\includegraphics[width=\textwidth]{./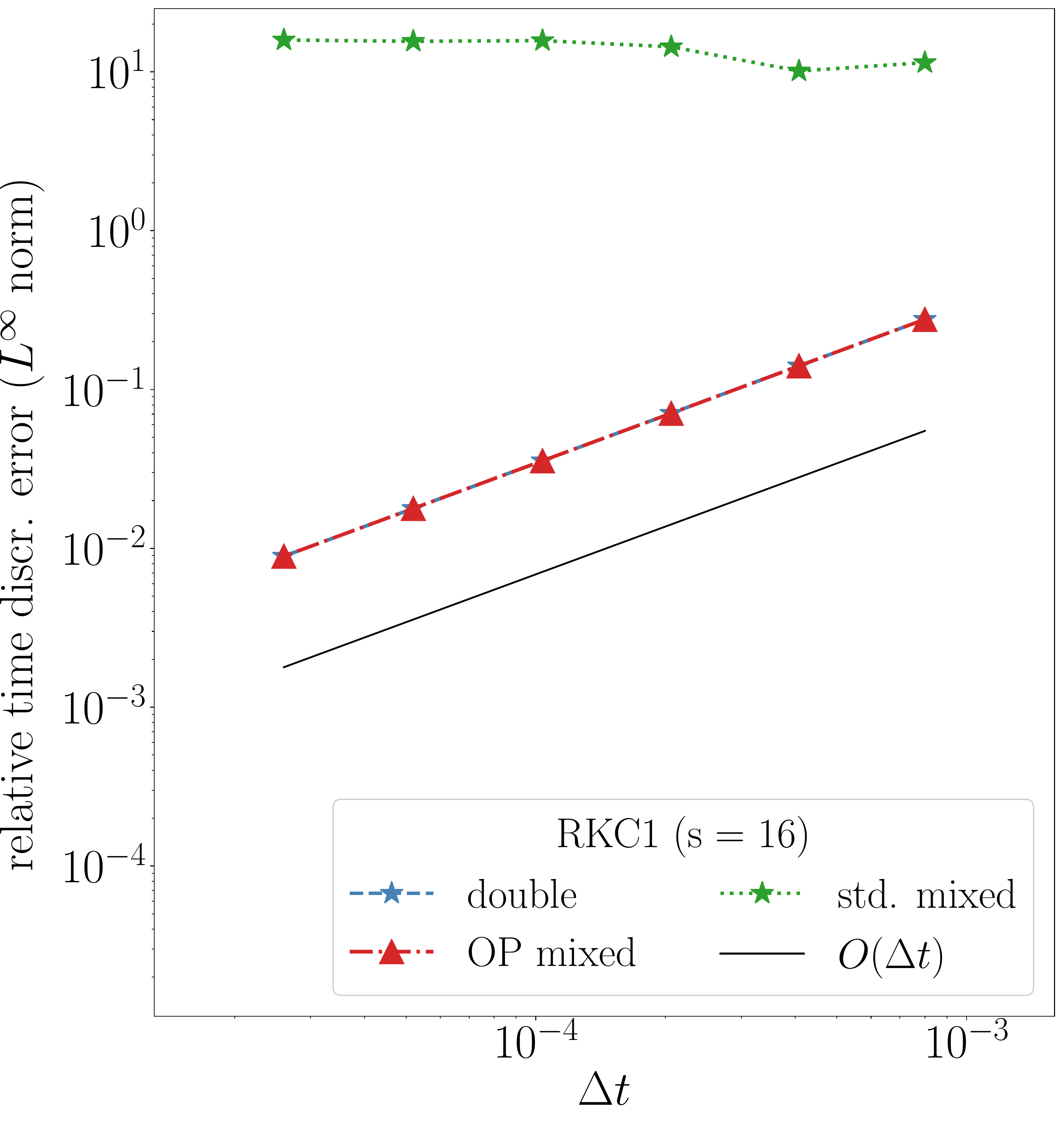}
	\end{subfigure}
	\begin{subfigure}{0.32\textwidth}
		\centering
		\includegraphics[width=\textwidth]{./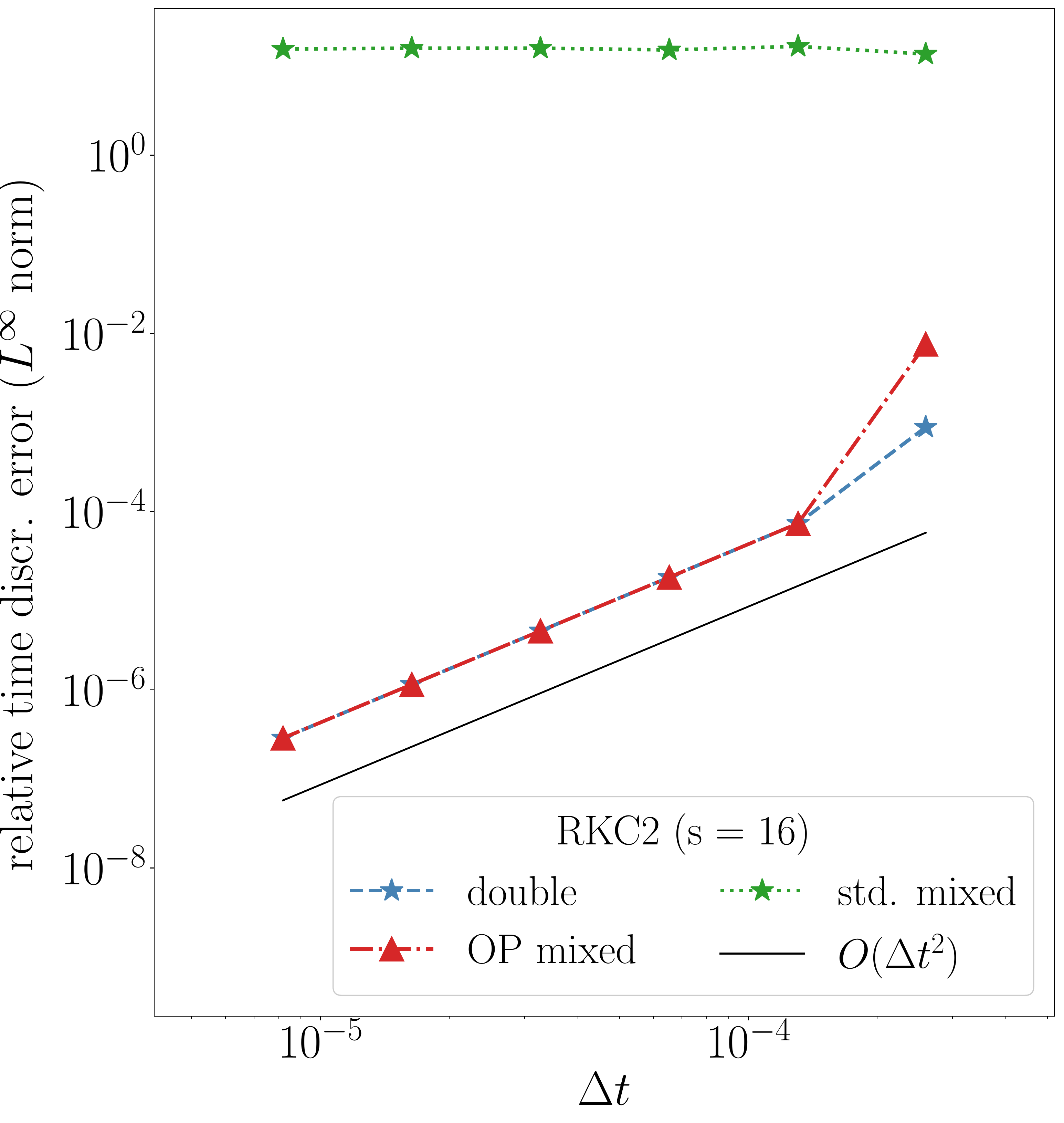}
	\end{subfigure}
	\begin{subfigure}{0.32\textwidth}
		\centering
		\includegraphics[width=\textwidth]{./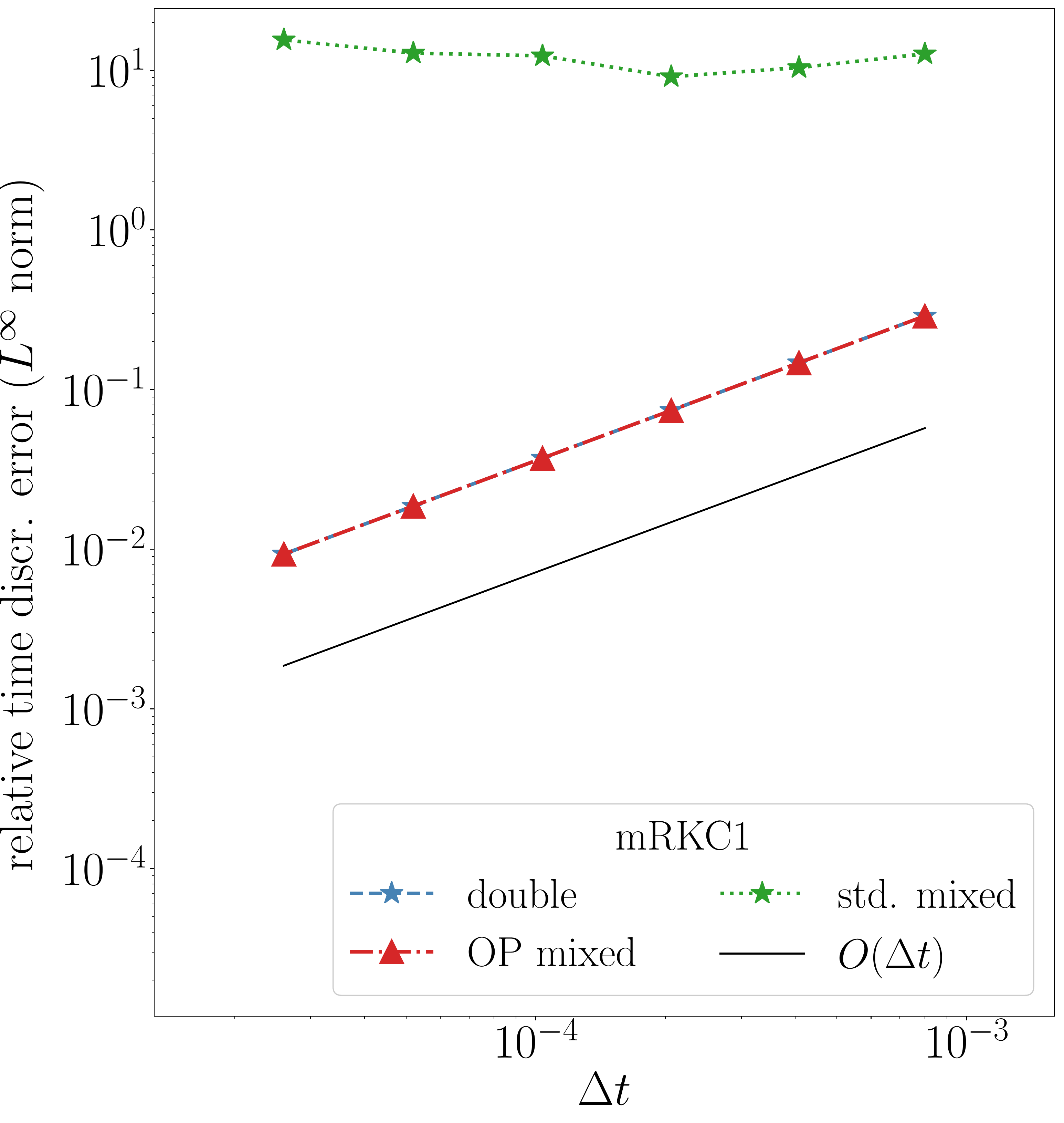}
	\end{subfigure}
	\caption{\textit{Mixed-precision RKC: time-discretization error convergence for the heat equation in the L-shaped domain. ``OP'' stands for order-preserving and ``std.~mixed'' indicates a standard non-order preserving mixed-precision implementation.}}
	\label{fig:3}
\end{figure}

\begin{figure}[h!]
	\begin{subfigure}{0.32\textwidth}
		\centering
		\includegraphics[width=\textwidth]{./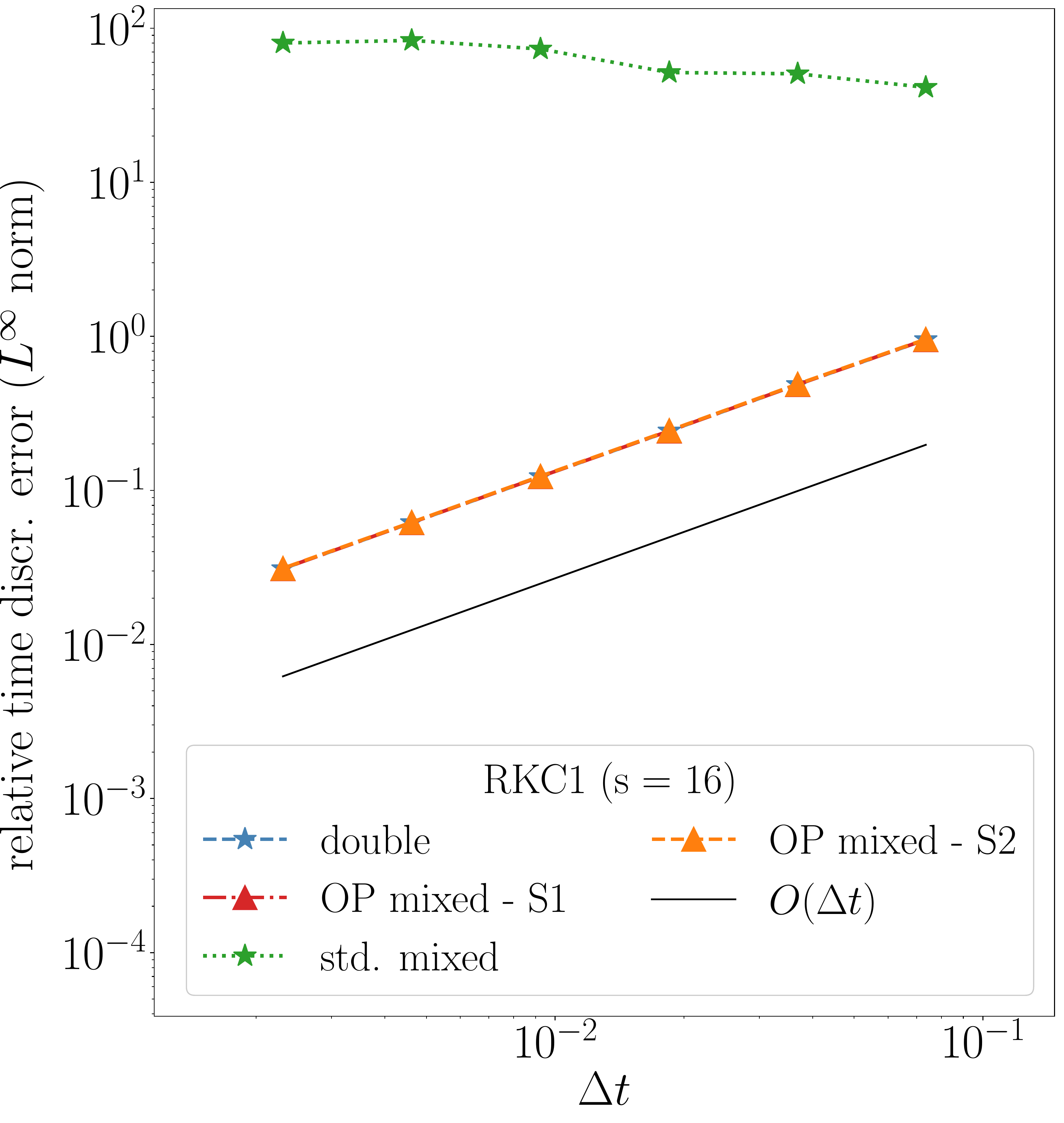}
	\end{subfigure}
	\begin{subfigure}{0.32\textwidth}
		\centering
		\includegraphics[width=\textwidth]{./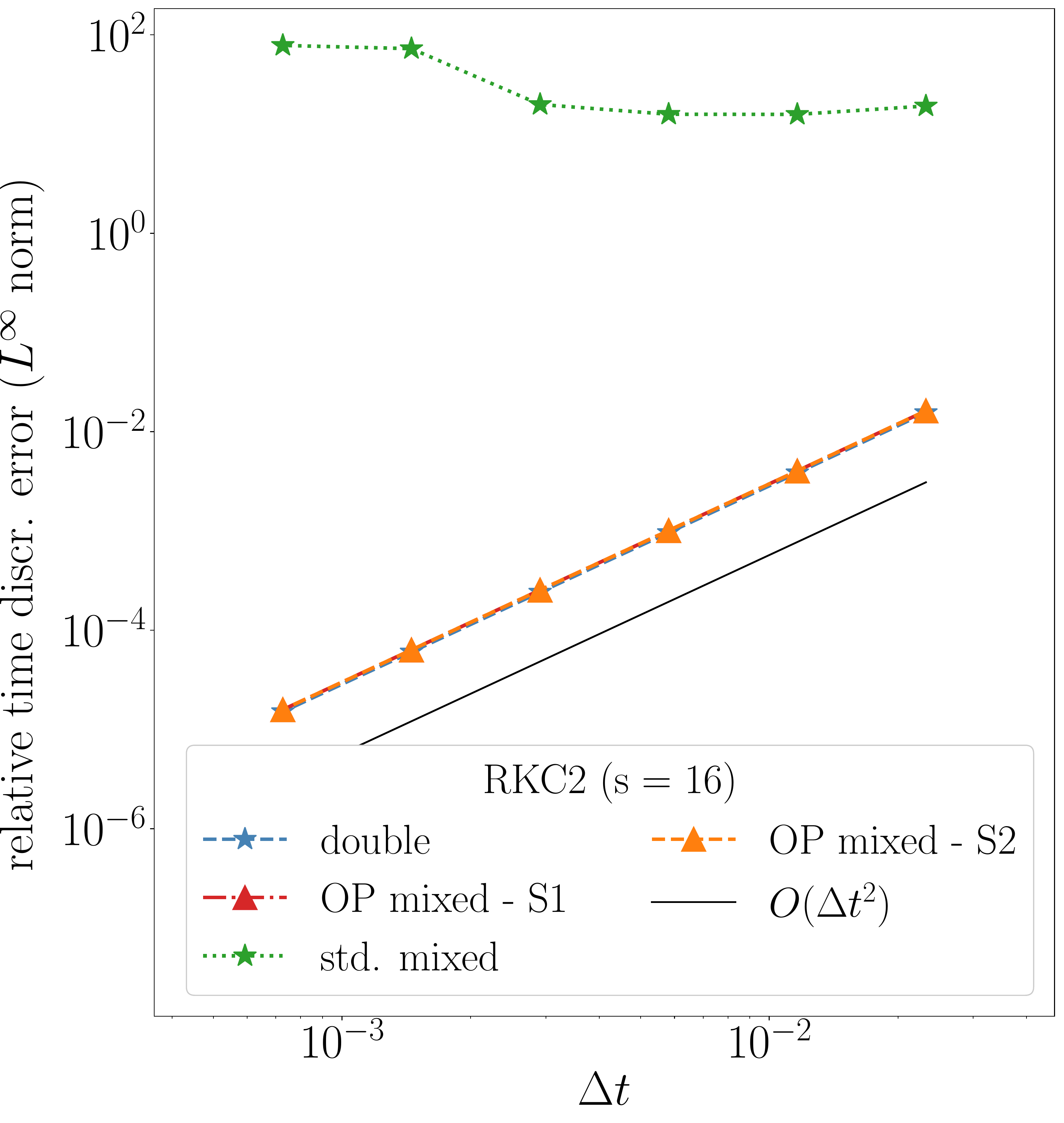}
	\end{subfigure}
	\begin{subfigure}{0.32\textwidth}
		\centering
		\includegraphics[width=\textwidth]{./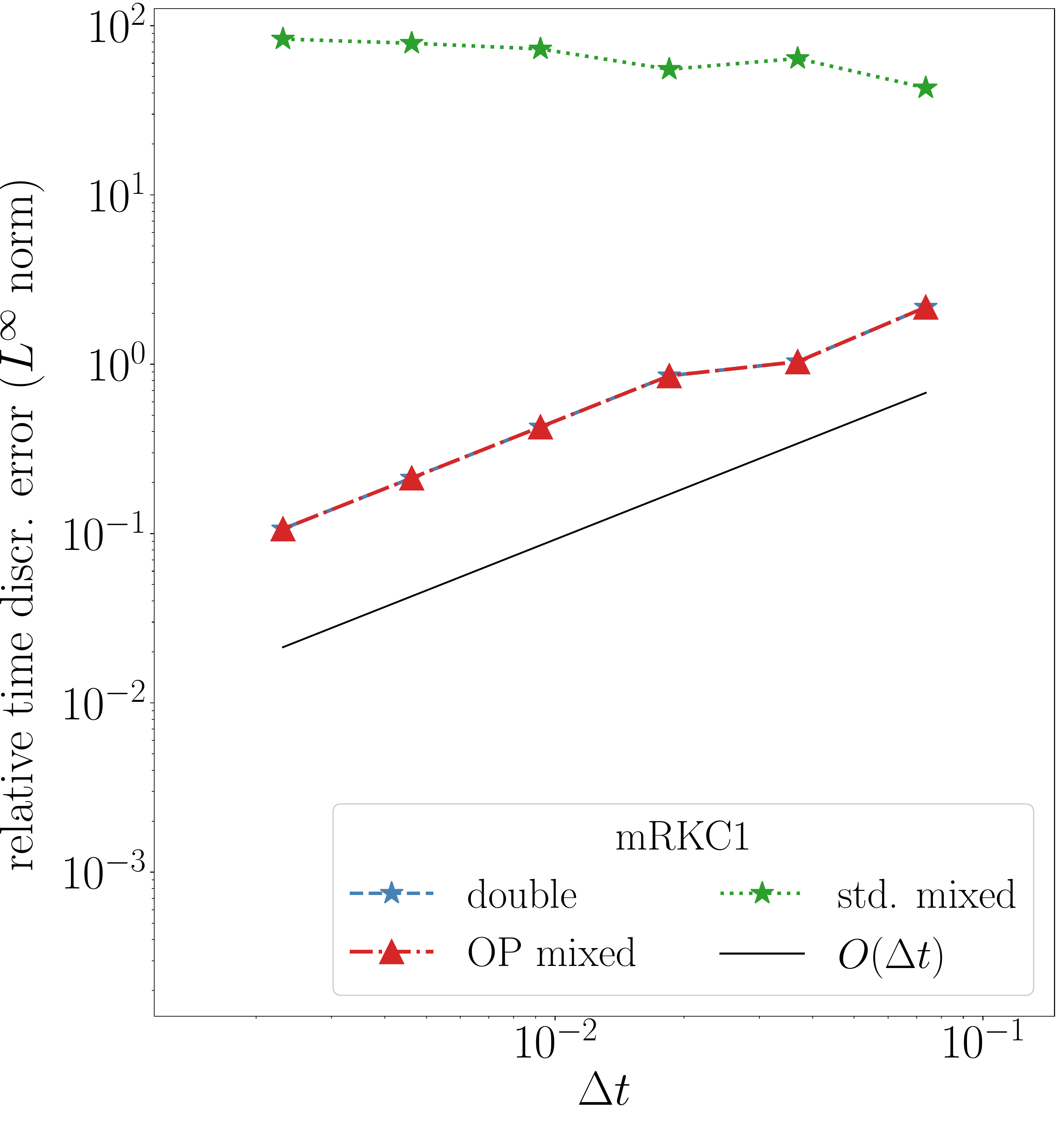}
	\end{subfigure}
	\caption{\textit{Mixed-precision RKC: time-discretization error convergence for the Brussellator. ``OP'' stands for order-preserving and S1 and S2 stand for Scenario 1 and 2 respectively, while ``std.~mixed'' indicates a standard non-order preserving mixed-precision implementation.}}
	\label{fig:4}
\end{figure}

\begin{figure}[h!]
	\centering
	\begin{subfigure}{0.36\textwidth}
		\centering
		\includegraphics[width=\textwidth]{./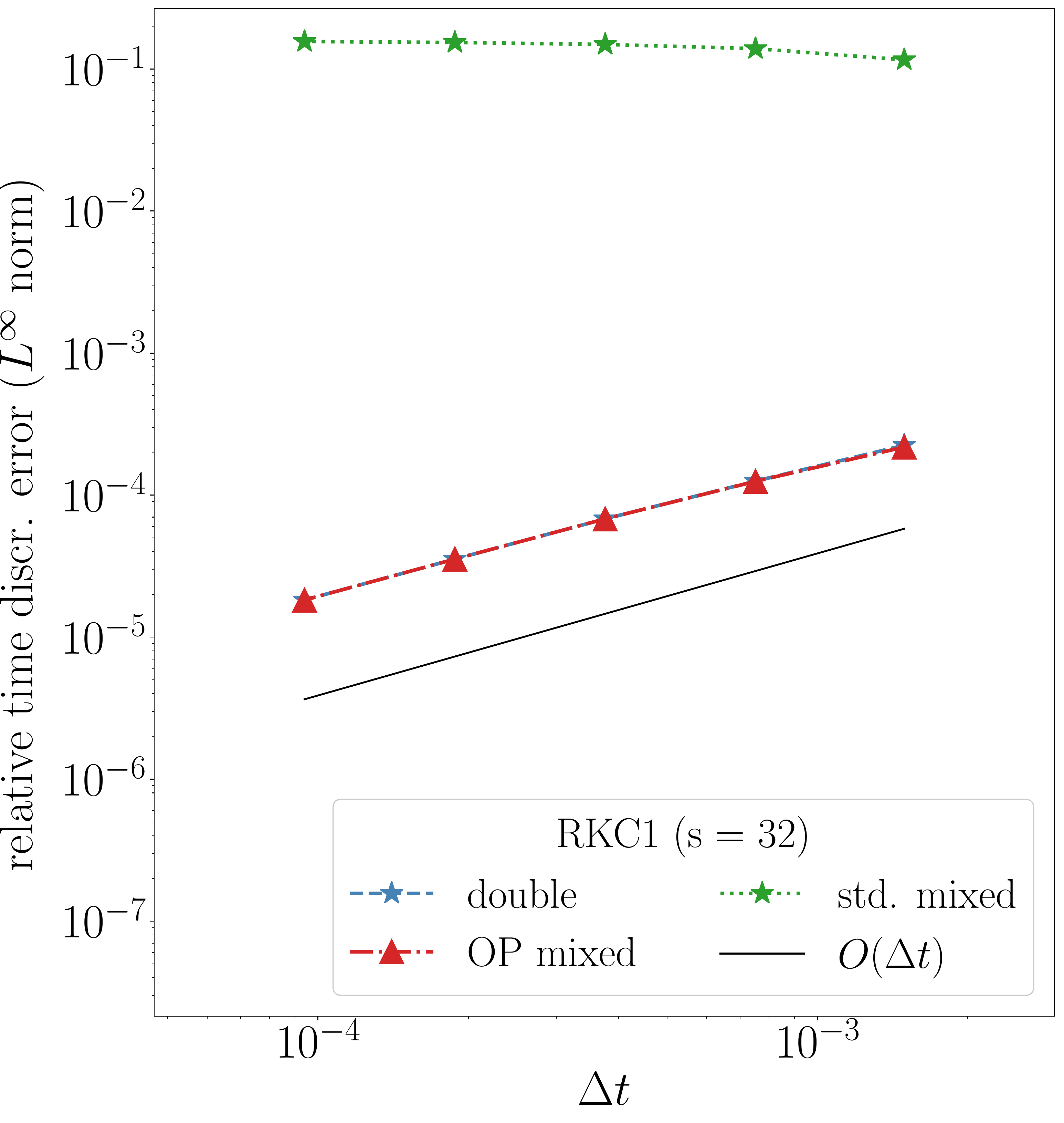}
	\end{subfigure}
	\begin{subfigure}{0.36\textwidth}
		\centering
		\includegraphics[width=\textwidth]{./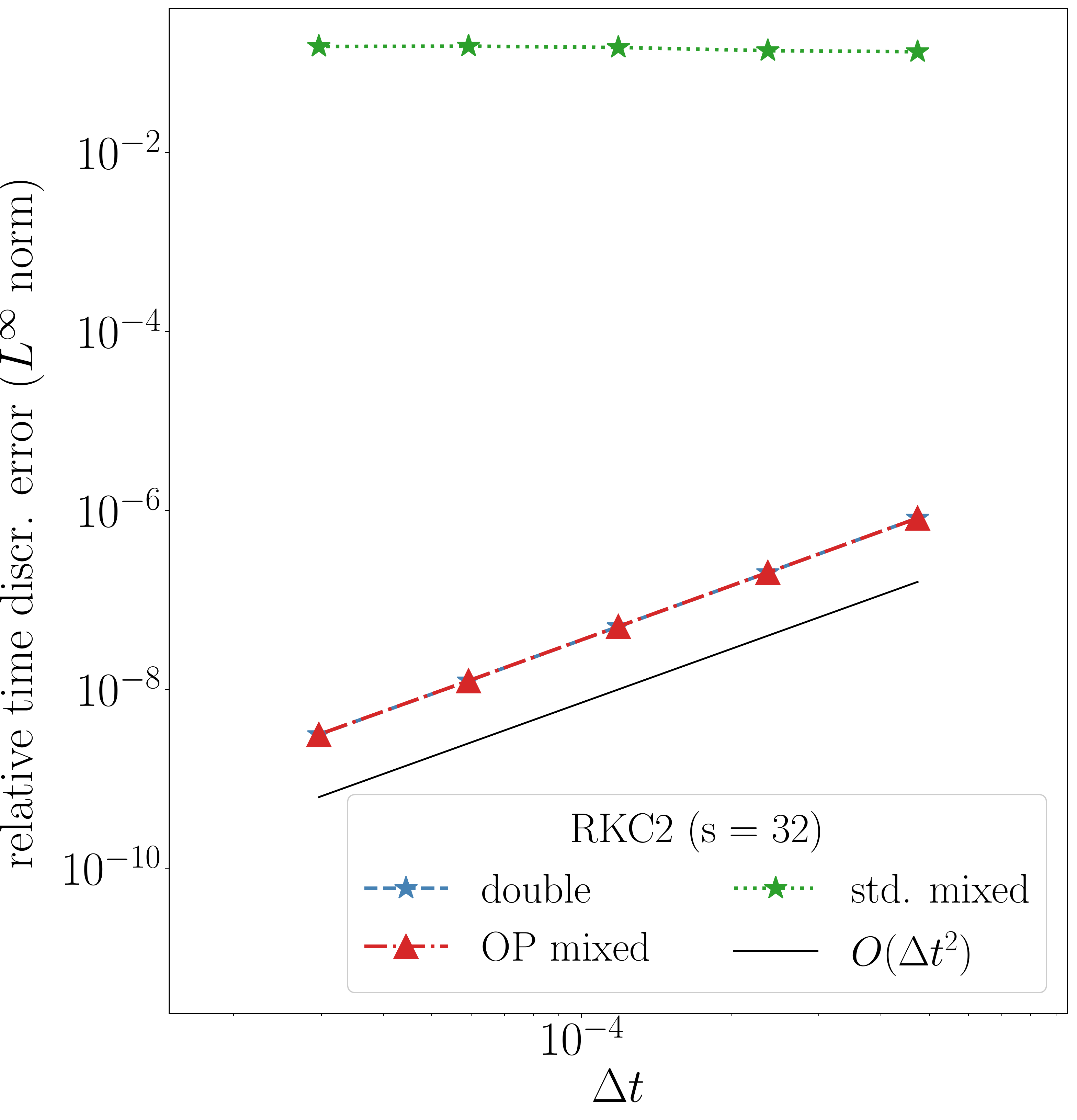}
	\end{subfigure}
	\caption{\textit{\moda{Mixed-precision RKC: time-discretization error convergence for the $4$-Laplace diffusion problem. ``OP'' stands for order-preserving, while ``std.~mixed'' indicates a standard non-order preserving mixed-precision implementation.}}}
	\label{fig:4bis}
\end{figure}

\subsubsection{Number of stages vs error}
We now investigate the stability of our methods as the number of stages increases. More specifically, we look at how the number of stages $s$ affects the global rounding error in the mixed-precision schemes. We only consider Problem 1 in 2D with $N=2^4$, Problem 3 with $N=2^8$, \moda{and Problem 4 with $N=2^6$,} and we fix $\Dt\rho = s^2$ for RKC1 and $\Dt\rho = \frac{1}{2}\beta^2(s,\frac{2}{13})$ for RKC2. We estimate how the global rounding error compares to the time-discretization error of the schemes when run in exact arithmetic. Namely, we look at the ratios:
\begin{align}
\dfrac{\max_{n}||\hat{\uu}^n_h -  \uu^n_h||_{L^{\infty}(D)}}{\max_{n}||\uu^n_h -  \bar{\uu}^n_h||_{L^{\infty}(D)}},\qquad \max\left(\dfrac{\max_{n}||\hat{\uu}^n_h -  \uu^n_h||_{L^{\infty}(D)}}{\max_{n}||\uu^n_h -  \bar{\uu}^n_h||_{L^{\infty}(D)}},\ \dfrac{\max_{n}||\hat{v}^n_h -  v^n_h||_{L^{\infty}(D)}}{\max_{n}||v^n_h -  \bar{v}^n_h||_{L^{\infty}(D)}}\right),
\end{align}
for Problems 1 \moda{and 4, and Problem} 3 respectively. Here $\uu^n_h$ and $v^n_h$ are obtained by running the same numerical scheme as for $\hat{\uu}^n_h$ and $\hat{v}^n_h$, only in fully high precision. \moda{We consider the mixed-precision (bfloat16/double) RKC schemes, and we evaluate the nonlinear terms according to both Scenario 1 (high-precision evaluations of $\bm{g}$) and Scenario 2 (low-precision evaluations of the Jacobian), except for Problem 4 for which we can only follow Scenario 2.} The purpose of this test is to assess the magnitude of rounding errors vs discretization errors and to validate the results in \cref{sec:mixed-precision-ESRK} by estimating in practice the range of values of $s$ for which our mixed-precision schemes are stable.

\begin{figure}[b!]
	\centering
	\begin{subfigure}{0.36\textwidth}
		\centering
		\includegraphics[width=\textwidth]{./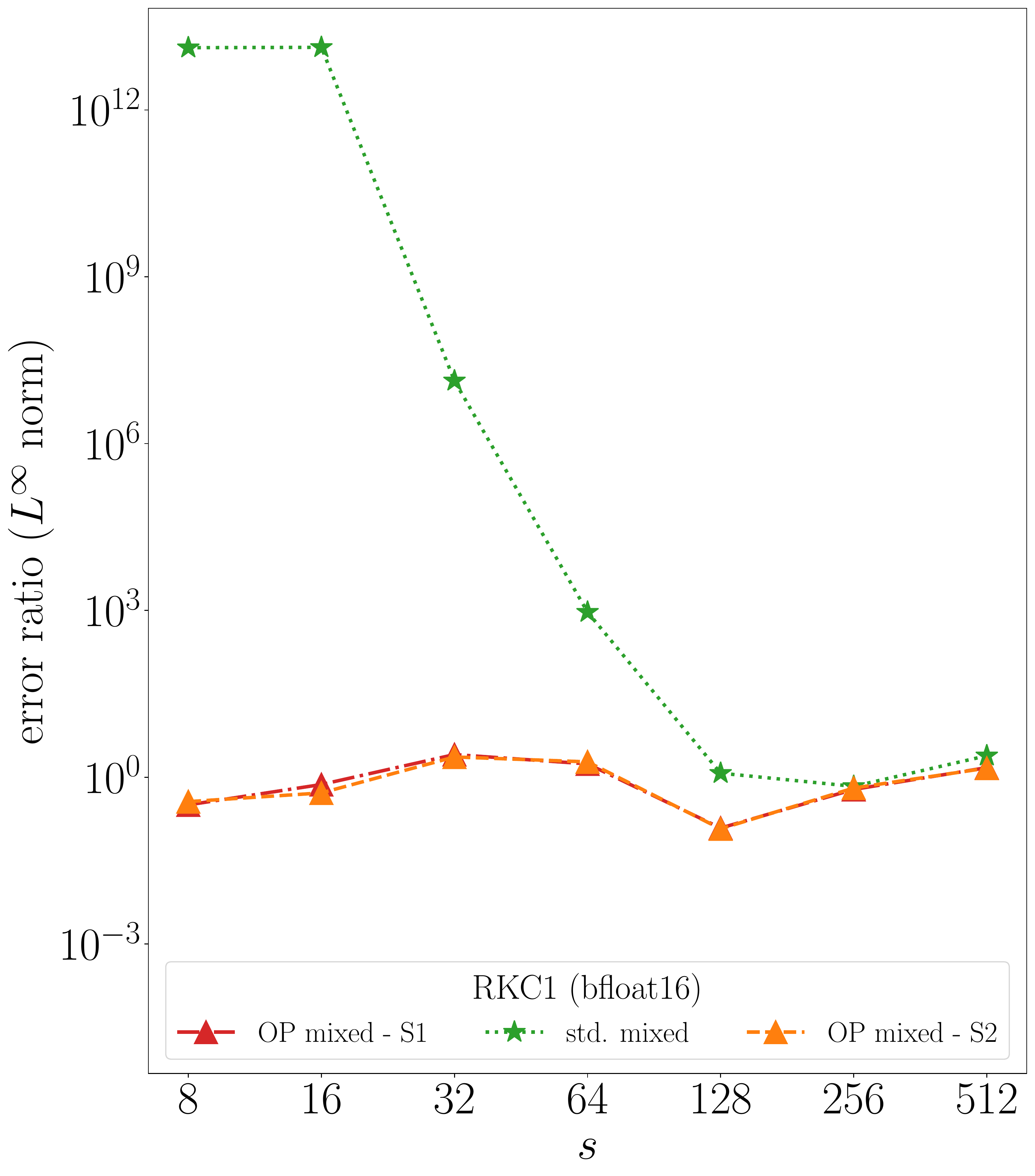}
	\end{subfigure}
	\hspace{6pt}
	\begin{subfigure}{0.36\textwidth}
		\centering
		\includegraphics[width=\textwidth]{./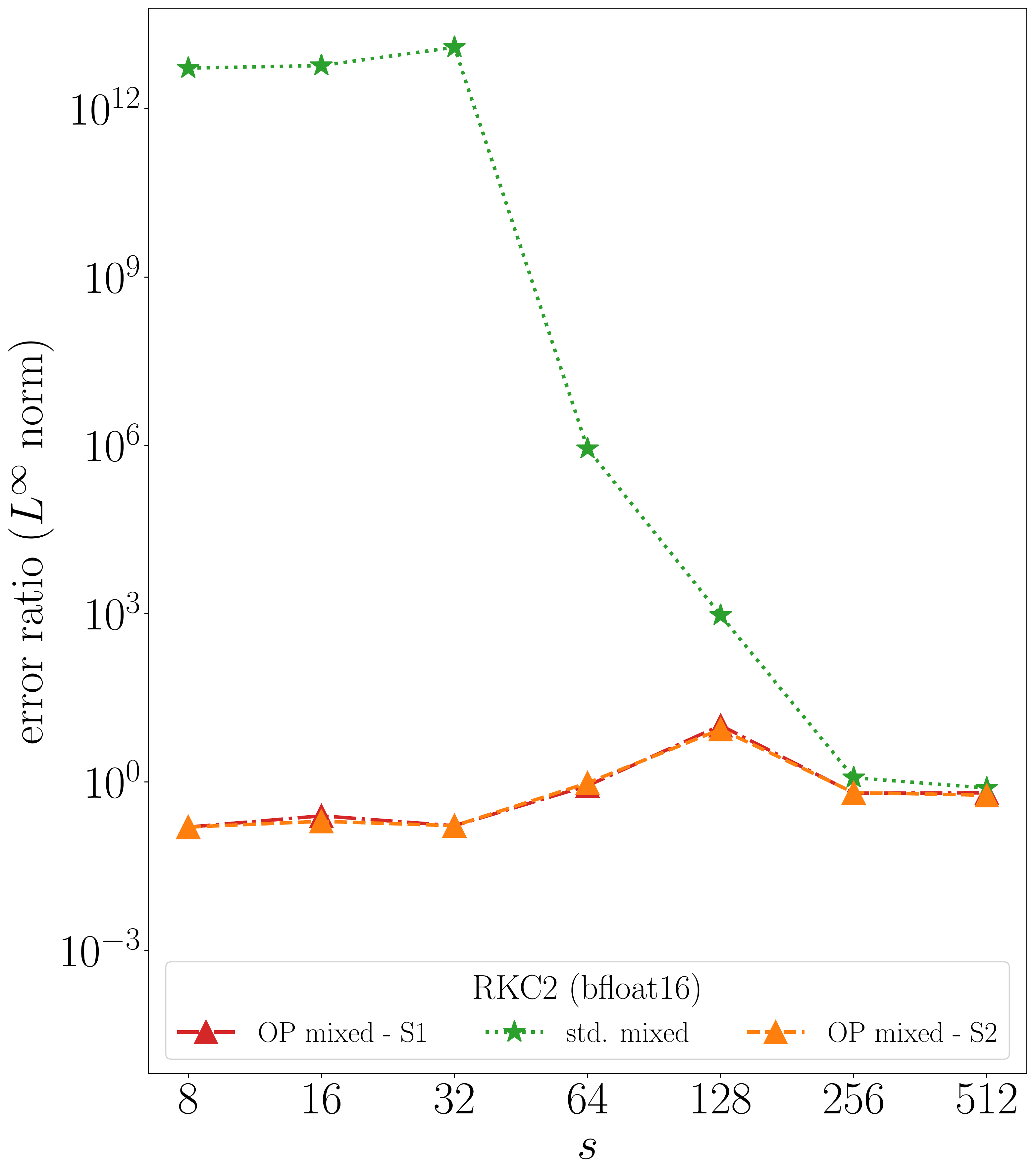}
	\end{subfigure}
	\caption{\textit{Mixed-precision RKC: ratio between rounding error and time-discretization error vs number of stages for the nonlinear heat equation in 2D. ``OP'' stands for order-preserving and S1 and S2 stand for Scenario 1 and 2 respectively, while ``std.~mixed'' indicates a standard non-order preserving mixed-precision implementation.}}
	\label{fig:5}
\end{figure}

Results are shown in Figure \ref{fig:5} (Problem 1), Figure \ref{fig:6} (Problem 3)\moda{, and Figure \ref{fig:7} (Problem 4)}. We observe that while the rounding error of the mixed-precision schemes is of roughly the same order of the time-discretization error (or even smaller), the fully low-precision scheme is orders of magnitude larger for small to moderate values of $s$. Nevertheless, the timestep, and consequently the discretisation error, increase with $s$ and eventually the accuracy of the fully low-precision scheme and its mixed-precision counterpart become comparable. \moda{We remark that in these experiments our mixed-precision RKC methods were stable for all values of $s$, suggesting that the mixed-precision RKC schemes are actually more robust than our theory predicts.}

\begin{figure}[h!]
	\centering
	\begin{subfigure}{0.36\textwidth}
		\centering
		\includegraphics[width=\textwidth]{./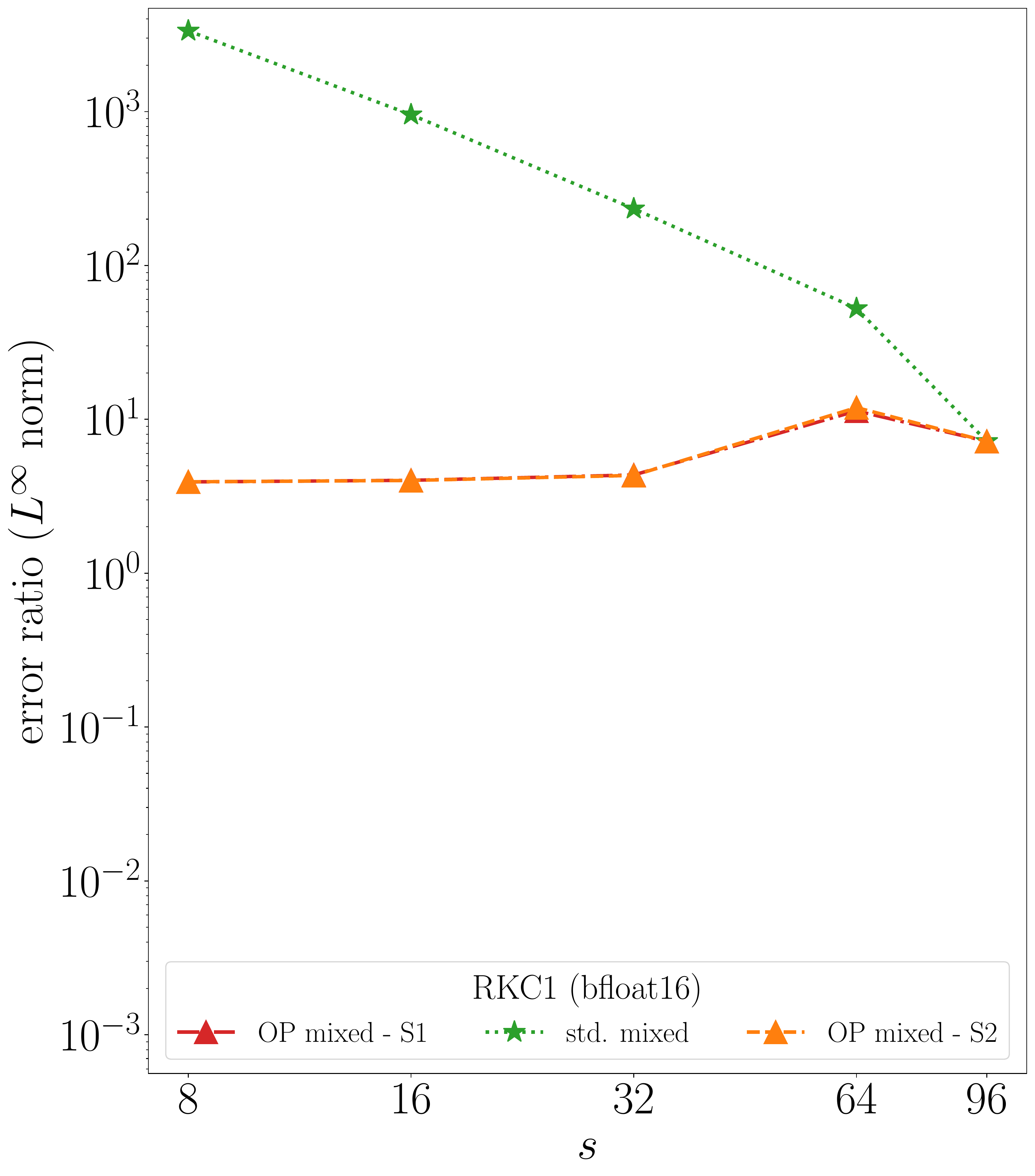}
	\end{subfigure}
	\hspace{6pt}
	\begin{subfigure}{0.36\textwidth}
		\centering
		\includegraphics[width=\textwidth]{./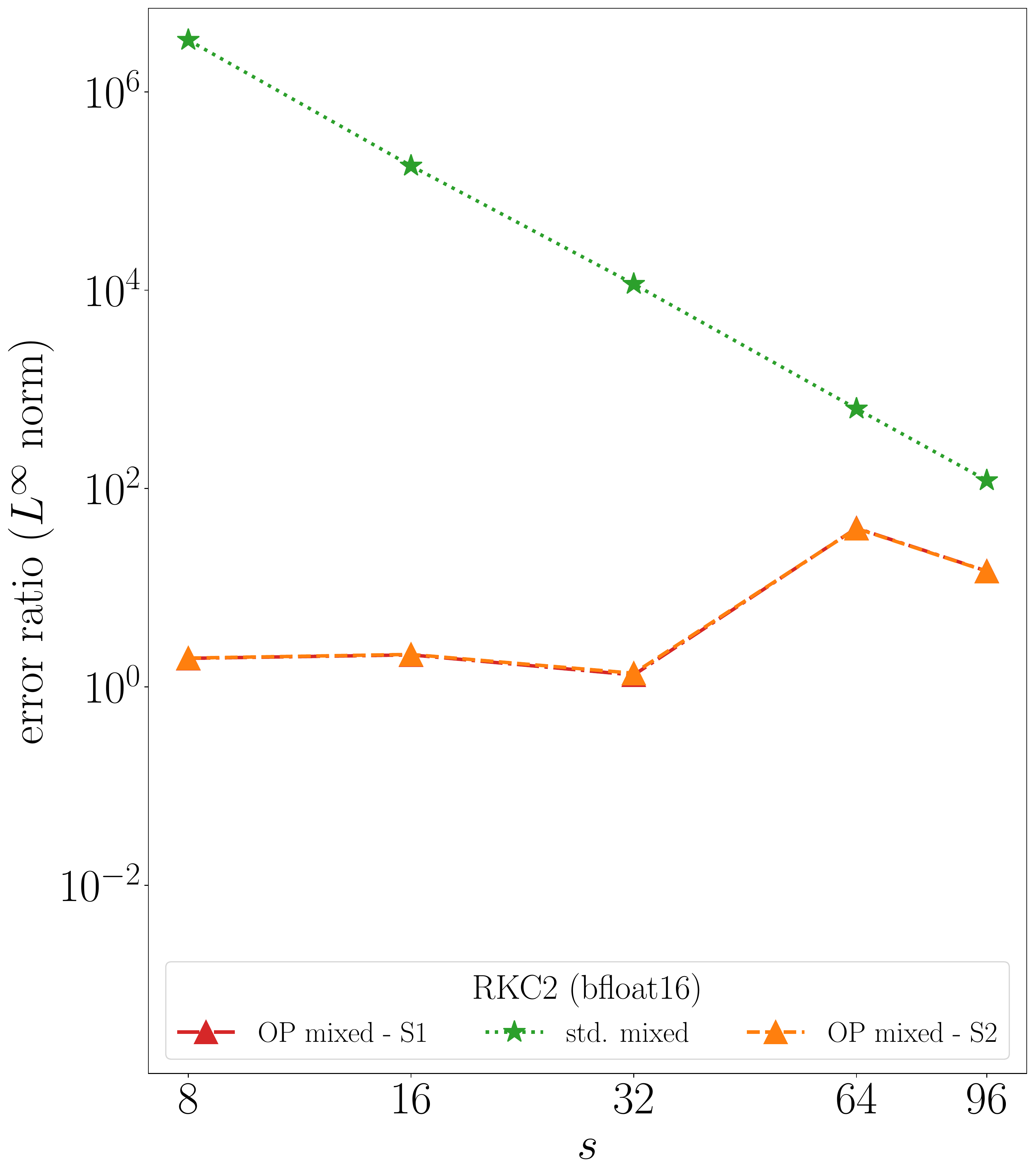}
	\end{subfigure}
	\caption{\textit{Mixed-precision RKC: ratio between rounding error and time-discretization error vs number of stages for the Brussellator. ``OP'' stands for order-preserving and S1 and S2 stand for Scenario 1 and 2 respectively, while ``std.~mixed'' indicates a standard non-order preserving mixed-precision implementation.}}
	\label{fig:6}
	\vspace{-12pt}
\end{figure}
\begin{figure}[h!]
	\centering
	\begin{subfigure}{0.36\textwidth}
		\centering
		\includegraphics[width=\textwidth]{./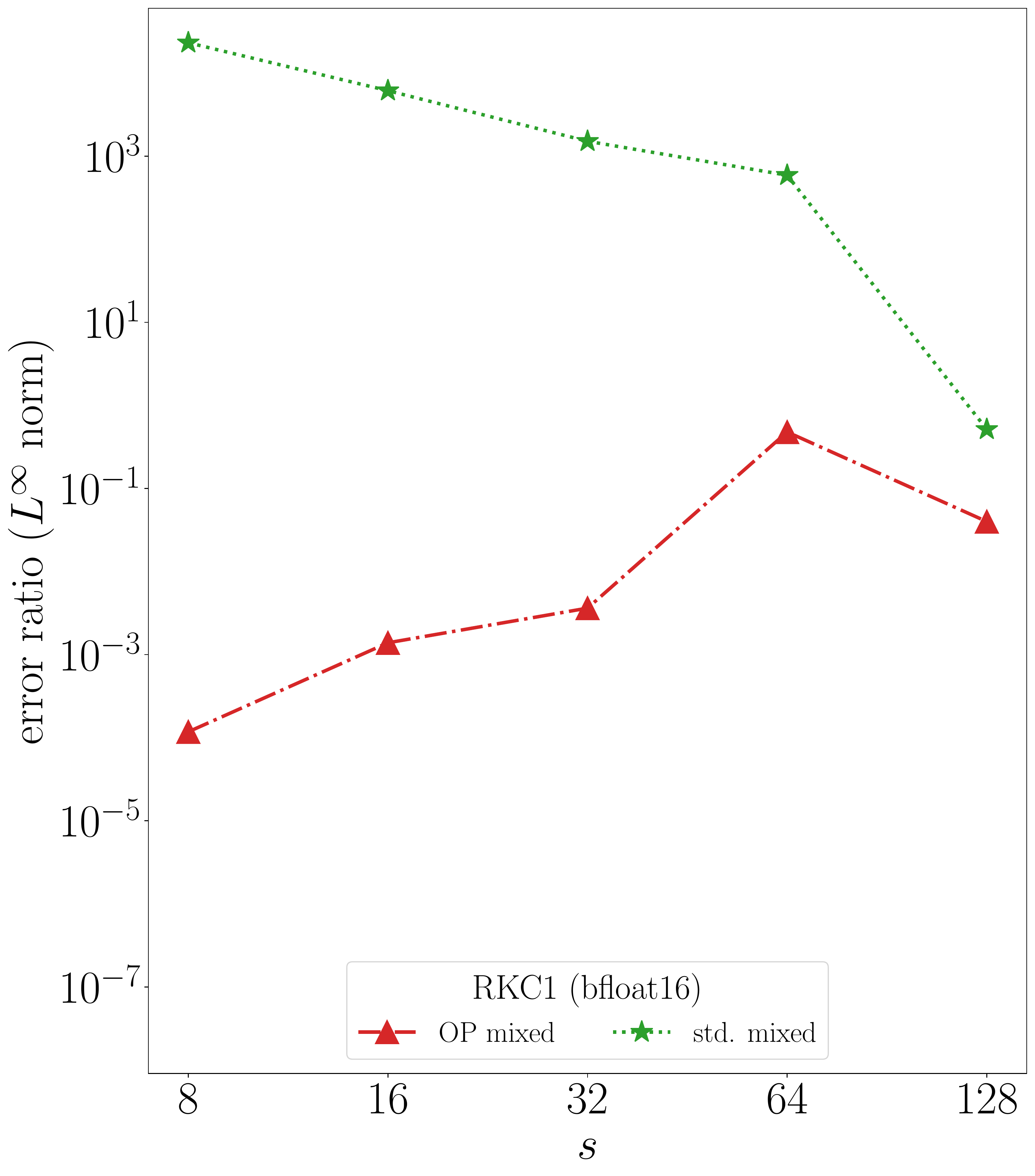}
	\end{subfigure}
	\hspace{6pt}
	\begin{subfigure}{0.36\textwidth}
		\centering
		\includegraphics[width=\textwidth]{./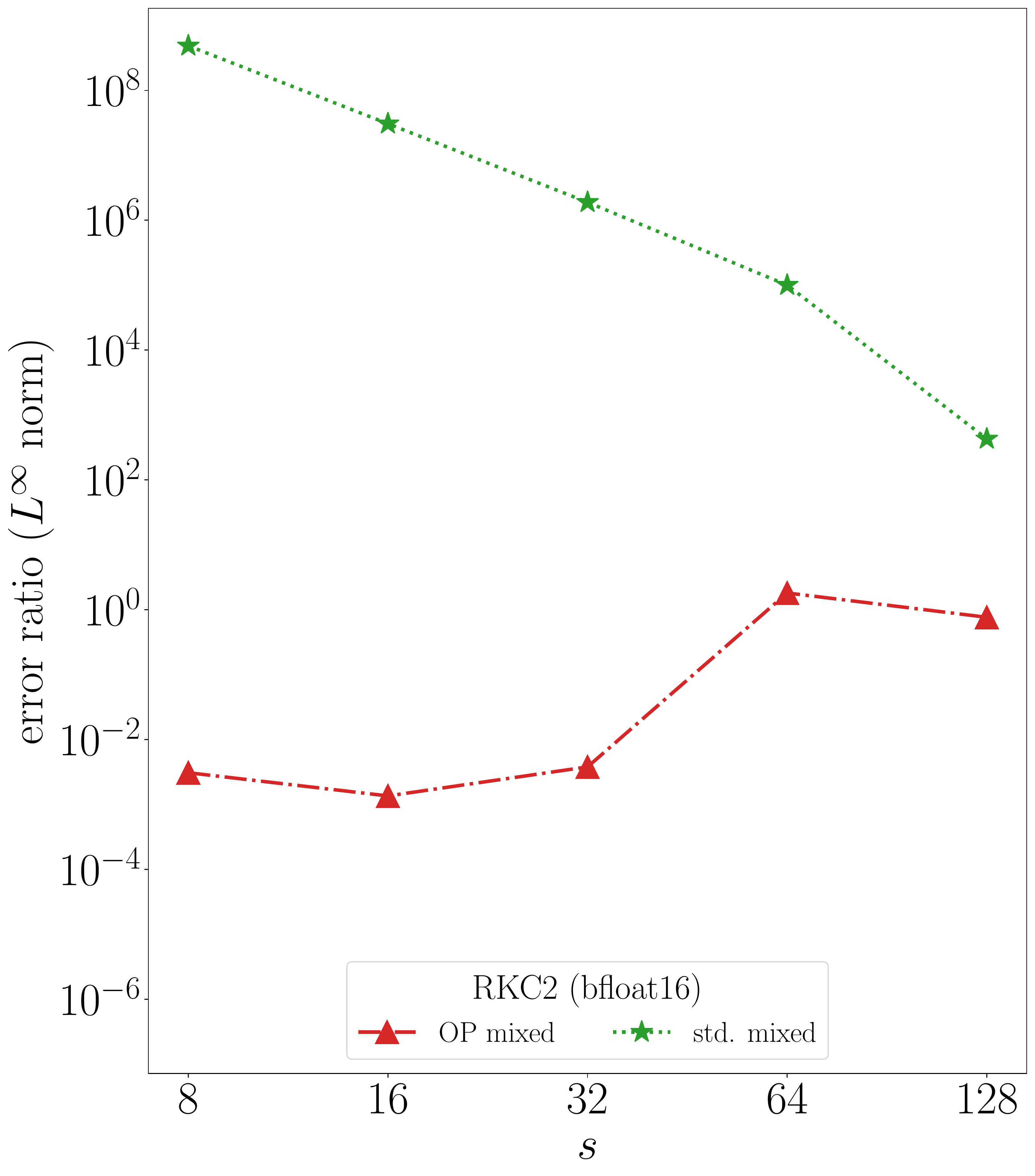}
	\end{subfigure}
	\caption{\textit{\moda{Mixed-precision RKC: ratio between rounding error and time-discretization error vs number of stages for the $4$-Laplace diffusion problem. ``OP'' stands for order-preserving, while ``std.~mixed'' indicates a standard non-order preserving mixed-precision implementation.}}}
	\label{fig:7}
	\vspace{-12pt}
\end{figure}

\subsubsection{Space-time convergence}
We conclude the section by testing the convergence in time and space of our mixed-precision methods. \moda{We only consider Problems 1 and 4, and we measure the error according to
	\begin{align}
	u^{-1}\max\limits_{n}||\hat{\uu}_h^n - \uu(t^n,\cdot)||_{L^\infty(D)},\quad\quad\quad u^{-1}||\hat{\uu}_h^{T/\Dt} - \uu(\infty,\cdot)||_{L^\infty(D)},
	\end{align}
	where we use the error measure on the left for Problem 1 and the one on the right for Problem 4. Problem 1 is the only problem for which an exact solution is available so we can actually compute the maximum error across timesteps. For Problem 4 we instead only compute the steady-state error by comparing the mixed-precision solution with a very accurate ($N=2^{12}$) steady-state solution obtained by solving the steady-state $4$-Laplacian problem in double precision. For RKC1 we fix $s=16$ and $\Dt \rho=s^2$, i.e.~$\Dt=O(N^{-2})$, while for RKC2 we vary $N_i=2^{i}$, $s_i=\lceil8\sqrt{N_i}\rceil$, and $\Dt\rho(N_i)=\frac{1}{2}\beta_2(s_i,\frac{2}{13})$ (cf.~\eqref{eq:defbetas}) for $i=2,\dots,6$ (for Problem 1), and for $i=6,\dots,10$ (for Problem 4), i.e.~$\Dt=O(N^{-1})$. Results are shown in Figures \ref{fig:9} and \ref{fig:10}.} The convergence behavior of our order-preserving methods under Strategies 1 and 2 is the same as for the schemes run fully in double precision. On the other hand, the non-order preserving methods stagnate and are unable to reduce the total error below a given limiting threshold.  

\begin{figure}[h!]
	\centering
	\begin{subfigure}{0.38\textwidth}
		\centering
		\includegraphics[width=\textwidth]{./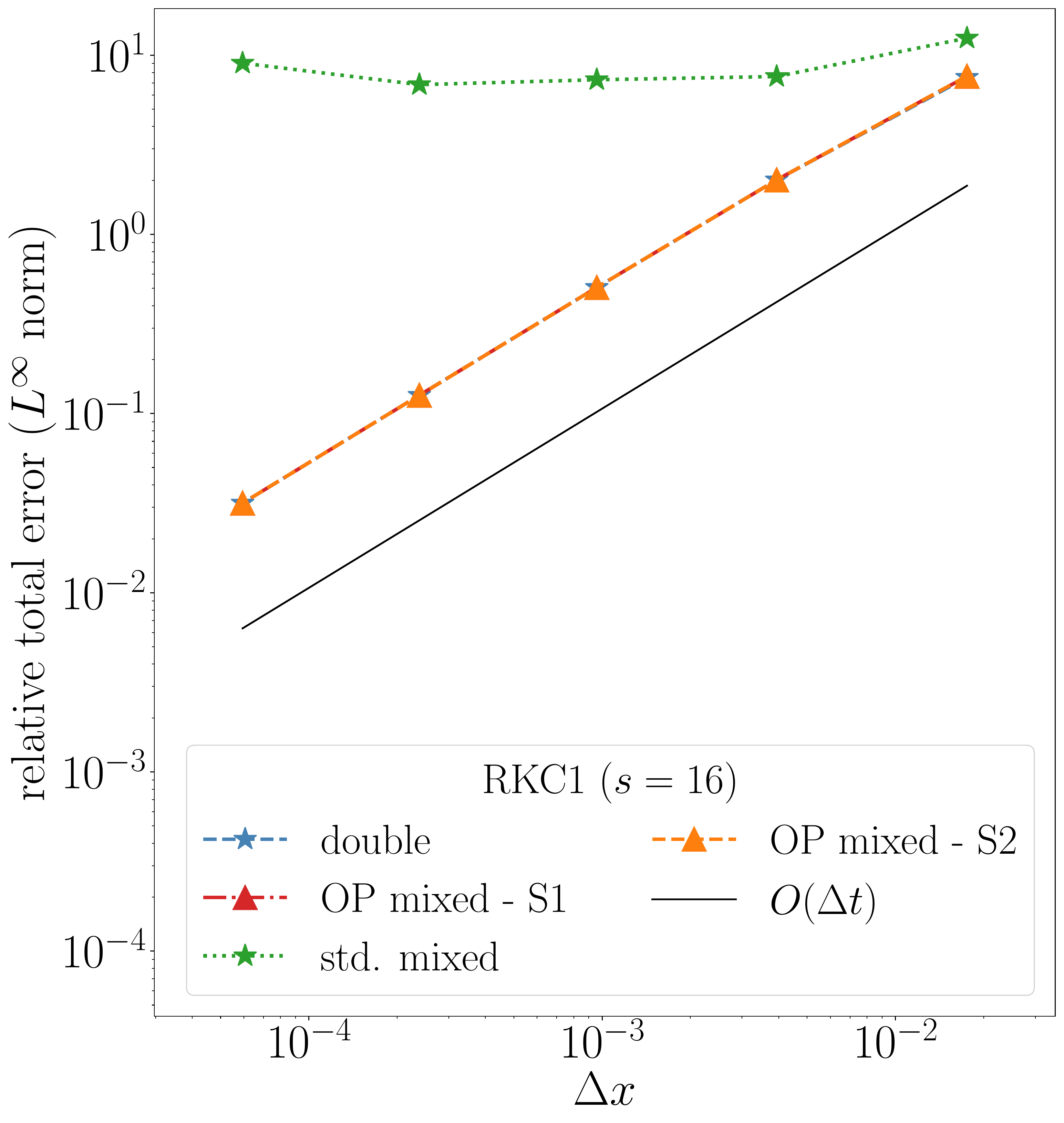}
	\end{subfigure}
	\hspace{6pt}
	\begin{subfigure}{0.38\textwidth}
		\centering
		\includegraphics[width=\textwidth]{./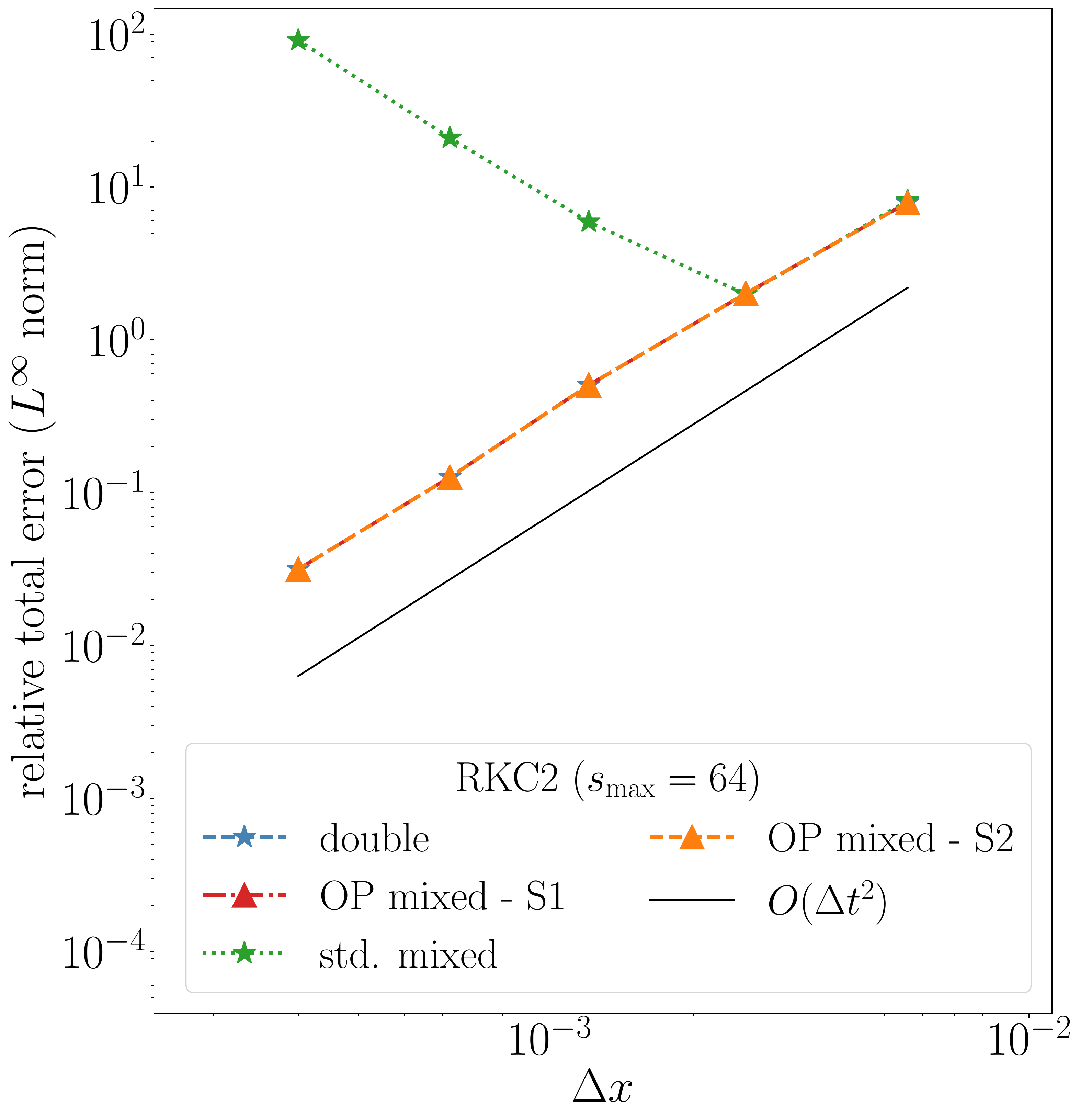}
	\end{subfigure}
	\caption{\textit{Mixed-precision RKC: total error vs $\Dt$ for the nonlinear heat equation in 2D. ``OP'' stands for order-preserving and S1 and S2 stand for Scenario 1 and 2 respectively, while ``std.~mixed'' indicates a standard non-order preserving mixed-precision implementation. While we kept $\Dt = O(N^{-2})$ for RKC1, we instead used $\Dt =O(N^{-1})$ for RKC2 and increased $s$ accordingly up to $s=64$. The convergence orders are as predicted by the theory.}}
	\label{fig:9}
\end{figure}
\begin{figure}[h!]
	\centering
	\begin{subfigure}{0.38\textwidth}
		\centering
		\includegraphics[width=\textwidth]{./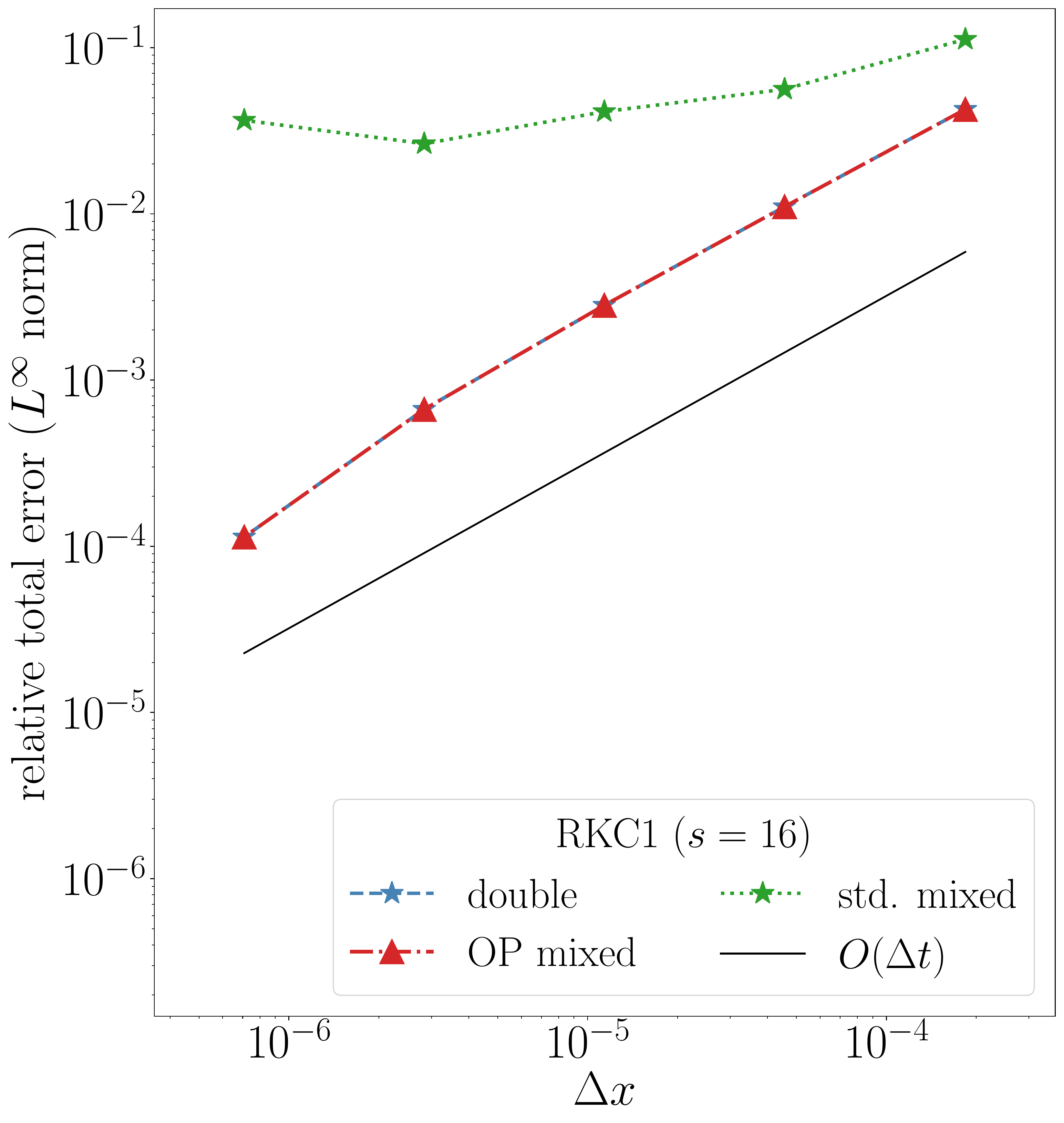}
	\end{subfigure}
	\hspace{6pt}
	\begin{subfigure}{0.38\textwidth}
		\centering
		\includegraphics[width=\textwidth]{./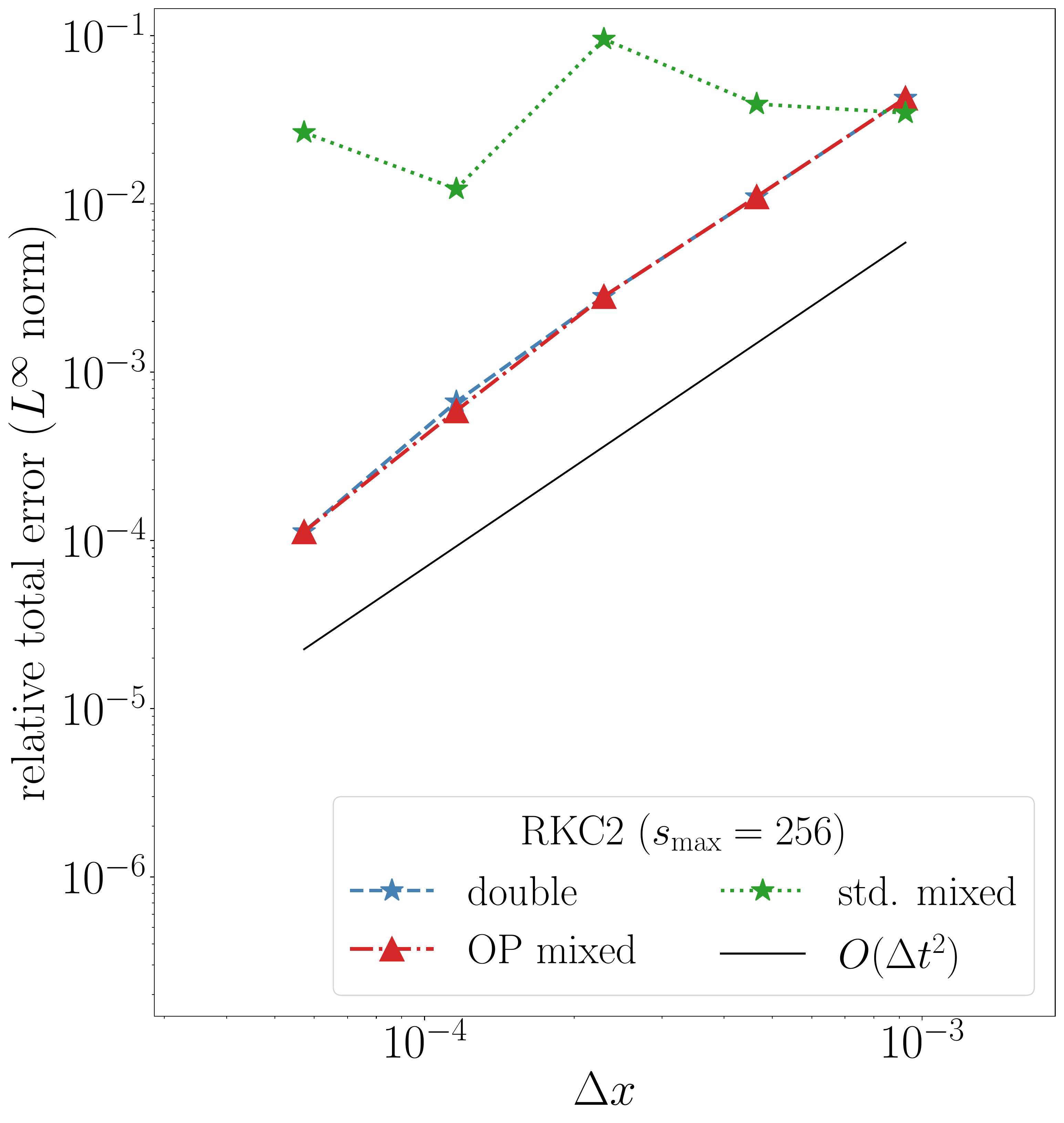}
	\end{subfigure}
	\caption{\moda{\textit{Mixed-precision RKC: total error vs $\Dt$ for the $4$-Laplace diffusion problem. ``OP'' stands for order-preserving, while ``std.~mixed'' indicates a standard non-order preserving mixed-precision implementation. While we kept $\Dt = O(N^{-2})$ for RKC1, we instead used $\Dt =O(N^{-1})$ for RKC2 and increased $s$ accordingly up to $s=256$. The convergence orders are as predicted by the theory.}}}
	\label{fig:10}
\end{figure}

\begin{remark}
	As an example of how advantageous can be using mixed-precision RKC methods, we also solved Problem 1 in 3D with $N=2^5$ and $\Delta t\rho=s^2$, $s=33$, using implicit Euler timestepping using the PETSc software library \cite{balay2014petsc} implementation of Newton's method and the preconditioned conjugate gradient method. As a preconditioner, we used the BoomerAMG algebraic multigrid routines of the Hypre library \cite{hypre}. Overall, the solution of Problem 3 required  on average roughly $6$ preconditioned conjugate gradient iterations and $3$ Newton iterations per time step. Assuming that the cost of 1 multigrid cycle is twice the cost of one high-precision matvec (see Section 5 in \cite{briggs2000multigrid}), we obtain that the number of high-precision matvecs required by implicit Euler is roughly $54$ per timestep, compared to only one high-precision matvec and $32$ half-precision matvecs for a bfloat16/double order-preserving mixed-precision RKC1.
\end{remark}

\section{Conclusions}
\label{sec:conclusions}
We presented new mixed-precision explicit stabilized schemes for stiff differential equations, considering both stiff and multirate problems. While the error of na\"ively implemented mixed-precision methods stagnates due to rounding errors, the mixed-precision schemes we proposed preserve the full order of convergence of the original high-precision methods. This order preservation is achieved by performing only one or two (for first- and second-order methods respectively) high-precision evaluations of the right-hand side, while the remaining function evaluations are only needed to preserve stability, and can be performed in low precision.
Our order-preserving mixed-precision schemes were constructed by linearizing the original methods and carefully evaluating the Jacobian of the right-hand side in low precision. For this purpose, we proposed different strategies for accurate low-precision Jacobian evaluations.

We showed that the mixed-precision methods preserve the original order of convergence, see \cref{thm:convmpRKCbis,thm:convmpmRKC}, and we studied their stability properties in \cref{thm:stabmpRKC}. We remark that our worst-case rounding error analysis does not take into account roundoff cancellation effects, which explains why our schemes behave better in practice than in theory. Since rounding errors disrupt all smoothness and spectral properties of the solution we were unable to prove stability in the standard ODE sense. However, extensive numerical experiments show that our \moda{methods remain stable}. Through our numerical experiments we also confirmed that the mixed-precision schemes preserve the full order of convergence, and that their error is barely distinguishable from the error of the original high-precision schemes.

Our work naturally extends to other explicit stabilized methods based on orthogonal polynomials, and an extension to strong-stability-preserving RK methods is in preparation. Possible other future extensions to this work include the design of mixed-precision explicit stabilized methods for stiff stochastic differential equations, which are often run on chips supporting low-precision arithmetic.
\modb{Another possible research direction would be to investigate the relaxation of \cref{assumption:high_precision_exact}. We expect the rounding errors introduced by high-precision computations to be always negligible (as seen in our numerical experiments). However, a more detailed rounding error analysis could provide us with the insight needed to construct RK schemes that exploit a multi-precision or a fully low-precision implementation to further improve efficiency or obtain smaller error constants. We believe that the use of compensated summation \cite{higham1993accuracy} or stochastic rounding \cite{croci2021stochastic,CrociGilesSR2020,ConnollyHighamMary2020} could especially be beneficial in reduced-precision schemes.}
Open questions remain the design of stable order-preserving mixed-precision strategies for high-order stabilized methods, and the development of a stability theory that is able to circumvent the analytical obstacles deriving from rounding errors, namely loss of smoothness and destruction of spectral properties.

\section*{Acknowledgements}
We would like to thank Giacomo Garegnani for introducing us and making this project possible, and Milan Kl\"ower for the useful discussions and his help in making our low-precision emulator faster. We are also extremely grateful to Assyr Abdulle and Michael B.~Giles for giving us the freedom to pursue our own independent research.

\printbibliography

\begin{appendices}
\renewcommand{\thesection}{\Alph{section}}

\section{Technical results for the mixed-precision RKC1 and RKC2 methods}
\label{sec:sample:appendix}
Here we prove \cref{lemma:jacapp}. We indicate with $C$ a generic positive constant that only depends on $\bf$ and not on $s,u,\Dt$. The actual value of $C$ might change from line to line.

\begin{lemma}\label{lemma:jacapp}
	Let $\hat\Delta\bm{f}_j$ as in \cref{eq:defDf2}, then $\hat\Delta\bm{f}_{j} = \bm{f}(\hby^n+\hbd_j)-\bf(\hby^n) + O(\sqrt{u}\Dt+\Dt^2)$.
\end{lemma}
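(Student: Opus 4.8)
The plan is to establish the claim by a two-stage Taylor expansion argument combined with the finite-precision error model \cref{eq:std_floating_point_error_model} and the matrix-vector bound \cref{eq:matvecs_bound}. Recall that $\hat\Delta\bm{f}_j$ is defined in \cref{eq:defDf2} as $\hat\Delta\bm{f}_j = \hat A\hbd_j + \delta^{-1}(\hat{\bg}(\hby^n+\delta\hbd_j)-\bg(\hby^n))$ with $\delta=\sqrt{u}/\Dt$. The target is to show this equals $\bm{f}(\hby^n+\hbd_j)-\bm{f}(\hby^n)+O(\sqrt{u}\Dt+\Dt^2)$. The key structural fact is that $\hbd_j=O(\Dt)$, which follows from \cref{thm:convmpRKCbis} (or the discussion in \cref{sec:RKCmethods}), so that $\delta\hbd_j = O(\sqrt{u})$ is small and the argument of $\hat{\bg}$ stays in a neighborhood of $\hby^n$.

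First I would handle the linear part: by \cref{eq:matvecs_bound} (or \cref{lemma:boundDA}), $\hat A\hbd_j = A\hbd_j + \Delta A\,\hbd_j$ with $\|\Delta A\,\hbd_j\|_2 \leq \barc\barm^2 u\|A\|_2\|\hbd_j\|_2 = O(u\Dt)$, which is absorbed into $O(\Dt^2)$ for $u$ small (or more simply treated as an $O(u\Dt)$ term). Next I would expand the nonlinear difference quotient. Write $\hat{\bg}(\hby^n+\delta\hbd_j) = \bg(\hby^n+\delta\hbd_j) + \bm{\rho}$ where $\bm{\rho}=O(u)$ collects the rounding error from evaluating $\bg$ in low precision (here one uses that $\bg(\hby^n+\delta\hbd_j)$ is $O(1)$, so relative rounding errors are $O(u)$ absolute). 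Taylor-expanding $\bg$ around $\hby^n$: $\bg(\hby^n+\delta\hbd_j) = \bg(\hby^n) + \delta\,\bg'(\hby^n)\hbd_j + O(\delta^2\|\hbd_j\|_2^2) = \bg(\hby^n)+\delta\,\bg'(\hby^n)\hbd_j + O(u)$, since $\delta^2\|\hbd_j\|_2^2 = (u/\Dt^2)\cdot O(\Dt^2)=O(u)$. Therefore
\begin{equation}
\delta^{-1}\big(\hat{\bg}(\hby^n+\delta\hbd_j)-\bg(\hby^n)\big) = \bg'(\hby^n)\hbd_j + \delta^{-1}\big(O(u)+O(u)\big) = \bg'(\hby^n)\hbd_j + O(\sqrt{u}\Dt),
\end{equation}
using $\delta^{-1}=\Dt/\sqrt{u}$. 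Here the role of the scaling $\delta$ becomes transparent: it is chosen precisely so that the $O(u)$ rounding noise, once divided by $\delta$, becomes $O(\sqrt{u}\Dt)$ rather than $O(u\Dt^{-1})$.

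Combining the two parts gives $\hat\Delta\bm{f}_j = A\hbd_j + \bg'(\hby^n)\hbd_j + O(\sqrt{u}\Dt+\Dt^2) = \bm{f}'(\hby^n)\hbd_j + O(\sqrt{u}\Dt+\Dt^2)$, since $\bm{f}'=A+\bg'$. Finally, a further Taylor expansion of $\bm{f}$ around $\hby^n$ gives $\bm{f}(\hby^n+\hbd_j)-\bm{f}(\hby^n) = \bm{f}'(\hby^n)\hbd_j + O(\|\hbd_j\|_2^2) = \bm{f}'(\hby^n)\hbd_j + O(\Dt^2)$, and subtracting yields the claim. I expect the only delicate point to be bookkeeping the precise size of the rounding error $\bm{\rho}$ in $\hat{\bg}$: one must argue that it is genuinely $O(u)$ in absolute terms (this relies on $\bg(\hby^n+\delta\hbd_j)$ being bounded independently of $u$ and $\Dt$, which holds near the solution and is consistent with the smoothness assumptions), and that the constant hidden in $O(u)$ does not secretly depend on $s$ — which it does not, since $\hbd_j = c_j\Dt\bm{f}(\hby^n)+O(\Dt^2)$ with $|c_j|\leq 1$. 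Everything else is routine Taylor estimation under the standing assumption that $\bf$ is twice differentiable with bounded second derivative near the solution.
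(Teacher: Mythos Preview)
Your proof is correct and follows essentially the same route as the paper's: split $\hat\Delta\bm{f}_j$ into the low-precision matvec $\hat A\hbd_j=A\hbd_j+O(u\Dt)$ and the finite-difference quotient, bound the rounding error in $\hat{\bg}$ by $O(u)$ and the Taylor remainder by $O(\delta^2\|\hbd_j\|_2^2)=O(u)$, divide by $\delta$ to obtain $\bg'(\hby^n)\hbd_j+O(\sqrt{u}\Dt)$, and finish by Taylor-expanding $\bm{f}(\hby^n+\hbd_j)-\bm{f}(\hby^n)$. The only difference from the paper is the order of exposition (you treat the linear term first) and your explicit remark on $s$-independence of the constants, which the paper leaves implicit.
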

\begin{proof}
	We have
	\begin{equation}
	\hat{\bg}(\hby^n+\delta\hbd_j) -\bg(\hby^n) = \bg(\hby^n+\delta\hbd_j)-\bg(\hby^n)+\bm{r} = \bg'(\hby^n)\delta\hbd_j+\bm{r}+\bm{t},
	\end{equation}
	where $\bm{r}$ and $\bm{t}$ represent rounding and truncation errors, respectively.
	It holds $\Vert\bm{r}\Vert_2\leq C u$ and $\Vert \bm{t}\Vert_2\leq C \delta^2\Vert\hbd_j\Vert_2^2\leq C\delta^2\Dt^2\leq C u$, where we used $\Vert\hbd_j\Vert_2\leq C\Dt$. Hence
	\begin{equation}\label{eq:devhatJ}
	\begin{aligned}
	\delta^{-1}(\hat{\bg}(\hby^n+\delta\hbd_j)-\bg(\hby^n))
	&= \bg'(\hby^n)\hbd_j+\delta^{-1}(\bm{r}+\bm{t})=\bg'(\hby^n)\hbd_j+O(\sqrt{u}\Dt).
	\end{aligned}
	\end{equation}
	Therefore, using $\hat A \hbd_j=A\hbd_j+\Delta A_j\hbd_j$, and \cref{eq:defDf2}, it holds
	\begin{equation}\label{eq:err2}
	\begin{aligned}
	\hat\Delta\bm{f}_j
	&= A\hbd_j +\bg'(\hby^n)\hbd_j+O(\sqrt{u}\Dt)+\Delta A_j\hbd_j
	= \bm{f}'(\hby^n)\hbd_j + O(\sqrt{u}\Dt),
	\end{aligned}
	\end{equation}
	where we used $\Vert \Delta A_j \hbd_j\Vert_2\leq \barc \barm^2\Vert A\Vert_2 u\Vert\hbd_j\Vert_2\leq C\barc \barm^2\Vert A\Vert_2 u \Dt$. We conclude by Taylor expanding $\bm{f}(\hby^n+\hbd_j)-\bf(\hby^n)$.
\end{proof}

\section{Technical results for the mixed-precision mRKC method}\label{app:proofslemmasmpmRKC}
Now we prove \cref{lemma:hatDeltafe}. In what follows, we denote with $C$ a generic positive constant depending on $\ff$ and $\fs$, and not on $s,m,\eta,u,\Dt$. The actual value of $C$ might change from line to line.
Before proving \cref{lemma:hatDeltafe} we first need another auxiliary lemma:
\begin{lemma}\label{lemma:hatbfe}
	Let $\by\in\R^n$, $\hbfe$ as in \cref{eq:defhbfe}, and $\bfe$ as in \cref{eq:defbfe}. Then $\hbfe(\by)=\bfe(\by)+O(u+\eta^2)$.
\end{lemma}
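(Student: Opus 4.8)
The plan is to read this as a rounding-error analysis of the inner RKC1 scheme \cref{eq:defbfe}, comparing it stage by stage with its fully low-precision version \cref{eq:defhbfe}. Set $\be_j=\hbh_j-\bh_j$ for $j=0,\dots,m$, so that $\hbfe(\by)-\bfe(\by)=\be_m$ and $\be_0=\bm{0}$. The first ingredient I would establish is a uniform bound $\Vert\bh_j\Vert_2=O(1)$, independent of $\eta$, $u$ and $m$: the scheme \cref{eq:defbfe} is an RKC1 step with core coefficients $\vz,\vu,a_j$ and ``step'' $\eta$, and $\eta\rhof\le\beta m^2=\beta^1(m,\eps)$ by \cref{eq:defsmeta}, so \cref{lemma:coroll_perturbations} (applied with $A=\ff'(\by)$, assumed symmetric nonpositive definite, the relevant case in which $\ff$ stems from a diffusion operator) together with $\bfe(\by)=\bf(\by)+O(\eta)$ gives $\Vert\bh_j\Vert_2\le C$; a matching bound $\Vert\hbh_j\Vert_2\le C$ then follows by a standard induction from the error recursion below. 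Consequently all function values of $\ff,\fs$ that occur are $O(1)$, hence the low-precision evaluations obey $\hff(\cdot)=\ff(\cdot)+O(u)$ and $\hfs(\by)=\fs(\by)+O(u)$, and forming the arguments $\by+\eta\hbh_{j-1}$ as well as performing the vector operations in \cref{eq:defhbfe} each contribute only $O(u)$, since the RKC coefficients $\alpha_j,\beta_j,\gamma_j$ are bounded.

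Next I would write down the error recursion. Subtracting \cref{eq:defbfe} from \cref{eq:defhbfe} and collecting the rounding errors of the vector arithmetic into a term $\bm{\rho}_j$ with $\Vert\bm{\rho}_j\Vert_2=O(u)$ gives, for $j=2,\dots,m$,
\[
\be_j=\beta_j\be_{j-1}+\gamma_j\be_{j-2}+\alpha_j\big(\hff(\by+\eta\hbh_{j-1})-\ff(\by+\eta\bh_{j-1})\big)+\alpha_j\big(\hfs(\by)-\fs(\by)\big)+\bm{\rho}_j,
\]
together with $\Vert\be_1\Vert_2=O(u)$ for the base case. Writing the low-precision argument as $\by+\eta\hbh_{j-1}+O(u)$, using that $\ff$ is twice differentiable with bounded second derivative, and invoking $\Vert\hbh_{j-1}\Vert_2,\Vert\bh_{j-1}\Vert_2=O(1)$, a Taylor expansion of $\ff$ about $\by$ yields $\hff(\by+\eta\hbh_{j-1})-\ff(\by+\eta\bh_{j-1})=\eta\,\ff'(\by)\be_{j-1}+O(u+\eta^2)$, while $\hfs(\by)-\fs(\by)=O(u)$. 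Therefore $\be_j=\beta_j\be_{j-1}+\gamma_j\be_{j-2}+\eta\,\alpha_j\,\ff'(\by)\,\be_{j-1}+\bm{r}_j$ with $\Vert\bm{r}_j\Vert_2=O(u+\eta^2)$ — exactly the perturbed recursion \cref{eq:rk_perturbations} with $A=\ff'(\by)$, step $\eta$, and $(\mu_j,\nu_j,\kappa_j)$ replaced by $(\alpha_j,\beta_j,\gamma_j)$ and $(\oz,\ou,b_j)$ by $(\vz,\vu,a_j)$.

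Finally, \cref{lemma:perturbations}(i) gives the closed form $\be_m=\sum_{j=1}^{m}\frac{a_m}{a_j}U_{m-j}(\vz I+\vu\eta\,\ff'(\by))\bm{r}_j$, and since $\eta\rhof\le\beta^1(m,\eps)$ the arguments of the Chebyshev polynomials stay in $[-\vz,\vz]$; the norm estimates underlying \cref{lemma:coroll_perturbations} (case $p=1$, where the relevant constant is $c_m\le1$) then give $\Vert\be_m\Vert_2\le C\max_j\Vert\bm{r}_j\Vert_2=O(u+\eta^2)$, which is the assertion, and which also closes the induction for $\Vert\hbh_j\Vert_2$. I expect the main obstacle to be bookkeeping rather than ideas: one must verify that each of the three rounding-error sources (low-precision evaluation of $\ff,\fs$; low-precision formation of $\by+\eta\hbh_{j-1}$; low-precision vector arithmetic in \cref{eq:defhbfe}) is genuinely $O(u)$, which rests entirely on the uniform $O(1)$ bound for the stage vectors and on boundedness of the RKC coefficients. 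A minor technical point is that the pure-arithmetic perturbations $\bm{\rho}_j$ are not premultiplied by $\alpha_j$, so propagating them through \cref{lemma:perturbations} may a priori cost a factor polynomial in $m$; this is harmless in the regime the method is used in ($u$ and $\eta$ small relative to the problem data, cf.~the discussion following \cref{eq:defhbfe}) but is worth noting. Throughout one uses \cref{assumption:high_precision_exact}, under which \cref{eq:defbfe} is computed exactly.
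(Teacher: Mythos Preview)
Your proof is correct and follows essentially the same route as the paper's: subtract \cref{eq:defbfe} from \cref{eq:defhbfe}, model the low-precision function evaluations as the exact ones plus $O(u)$ perturbations, Taylor-expand $\ff$ to linearize the error recursion, and invoke \cref{lemma:perturbations} (together with the norm estimate underlying \cref{lemma:coroll_perturbations}) to conclude. You are in fact more thorough than the paper's three-line sketch: the paper does not spell out the uniform $O(1)$ bound on the stage vectors, and it lumps all rounding contributions into a single $\bm{r}_j$ with $\Vert\bm{r}_j\Vert_2\le Cu$ without separating out the vector-arithmetic part. Under the paper's conventions only the function evaluations in \cref{eq:defhbfe} are in low precision (cf.~the sentence comparing $\tbfe$ and $\hbfe$ after \cref{eq:defhbfe}), so by \cref{assumption:high_precision_exact} your $\bm{\rho}_j$ actually vanishes and the possible $m$-dependence you flag does not arise.
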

\begin{proof}
	In \cref{eq:defhbfe} we replace $\hff(\by+\eta\hbh_j)+\hfs(\by)=\ff(\by+\eta\hbh_j)+\fs(\by)+\bm{r}_{j}$, with $\Vert\bm{r}_{j}\Vert_2\leq C u$. By subtracting \cref{eq:defbfe}, we then obtain
	\begin{equation}
	\be_0 =\bm{0},\qquad
	\be_1=\alpha_1\bm{r}_0,\qquad 
	\be_j = \beta_j\be_{j-1} +\gamma_j\be_{j-2}+\alpha_j(\ff(\by+\eta\hbh_j)-\ff(\by+\eta\bh_j))+\alpha_j\bm{r}_{j-1}.
	\end{equation}
	We then conclude the proof by first using a Taylor expansion of $\ff$, and then invoking \cref{lemma:perturbations}.
\end{proof}
\begin{lemma}\label{lemma:hatDeltafe}
	Assume $\Dt\leq\sqrt{u}$. For $\hat\Delta\fe[j]$ as in \cref{eq:defhDfe} it holds $\hat\Delta\fe[j]  = \bfe'(\hby^n)\hbd_j+O\left((\sqrt{u}+\epsilon)\Dt+\Dt^2\right)$, and thus \cref{eq:requirementmRKC} holds with $\epsilon$ replaced by $\sqrt{u}+\epsilon$.
\end{lemma}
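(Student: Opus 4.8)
The plan is to prove \cref{lemma:hatDeltafe} by mirroring the argument of \cref{lemma:jacapp}, but with the averaged force $\bfe$ playing the role of $\bm{g}$ and with the scaling $\delta=\sqrt{u}/\Dt$ coming from \cref{eq:defhDfe}, while using \cref{lemma:hatbfe} to control the low-precision error of the inner RKC integrator. First I would write
\begin{equation*}
\hat\Delta\fe[j] = \delta^{-1}\left(\hbfe(\hby^n+\delta\hbd_j)-\tbfe(\hby^n)\right)
= \delta^{-1}\left(\bfe(\hby^n+\delta\hbd_j)-\bfe(\hby^n)\right) + \delta^{-1}\bm{r} + \delta^{-1}\bm{s},
\end{equation*}
where $\bm{r}=\hbfe(\hby^n+\delta\hbd_j)-\bfe(\hby^n+\delta\hbd_j)$ is controlled by \cref{lemma:hatbfe} as $\Vert\bm{r}\Vert_2\leq C(u+\eta^2)$, and $\bm{s}=\bfe(\hby^n)-\tbfe(\hby^n)$ is controlled by \cref{lemma:errhbfe} as $\Vert\bm{s}\Vert_2\leq C(\epsilon\eta+\eta^2)$. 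Since $\eta\leq 8\Dt$ (cf.\ \cref{eq:defsmeta} and the end of the proof of \cref{thm:convmpmRKC}), we have $\eta^2\leq C\Dt^2$, and using $\Dt\leq\sqrt u$ we also get $\eta^2\leq C u$; thus $\delta^{-1}(\Vert\bm{r}\Vert_2+\Vert\bm{s}\Vert_2)\leq \delta^{-1}C(u+\epsilon\eta+\Dt^2)=\frac{\Dt}{\sqrt u}C(u+\epsilon\Dt+\Dt^2)=O((\sqrt u+\epsilon)\Dt)$, where I again use $\Dt\leq\sqrt u$ to absorb the $\Dt^2/\sqrt u\leq\Dt$ term.

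Next I would Taylor expand the exact averaged force: $\bfe(\hby^n+\delta\hbd_j)-\bfe(\hby^n)=\bfe'(\hby^n)\delta\hbd_j+\bm{t}$ with $\Vert\bm{t}\Vert_2\leq C\delta^2\Vert\hbd_j\Vert_2^2$. Here $\bfe$ is smooth because, by \cref{lemma:phif} and \cref{def:fe}, it is a smooth function of the data (or more directly, $\bfe(\by)=(\bu(\eta)-\by)/\eta$ with $\bu$ depending smoothly on $\by$ through \cref{eq:defu}), and I would note that its second derivative is bounded independently of the stiffness of $\ff$ just as $\Cddf$ is. Using $\Vert\hbd_j\Vert_2\leq C\Dt$ (which follows from $\hbd_j=c_j\Dt\bfe(\hby^n)+O(\Dt^2)$ as established in the proof of \cref{thm:convmpmRKC}, or from \cref{ass:ddf}-type bounds), we get $\delta^{-1}\Vert\bm{t}\Vert_2\leq C\delta\Vert\hbd_j\Vert_2^2\leq C\frac{\sqrt u}{\Dt}\Dt^2=C\sqrt u\,\Dt$. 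Dividing by $\delta$ then gives $\delta^{-1}(\bfe(\hby^n+\delta\hbd_j)-\bfe(\hby^n))=\bfe'(\hby^n)\hbd_j+O(\sqrt u\,\Dt)$. Collecting all the pieces yields $\hat\Delta\fe[j]=\bfe'(\hby^n)\hbd_j+O((\sqrt u+\epsilon)\Dt)$, and one more Taylor expansion $\bfe(\hby^n+\hbd_j)-\bfe(\hby^n)=\bfe'(\hby^n)\hbd_j+O(\Dt^2)$ converts this into $\hat\Delta\fe[j]=\Delta\fe[j]+O((\sqrt u+\epsilon)\Dt+\Dt^2)$, which is exactly \cref{eq:requirementmRKC} with $\epsilon$ replaced by $\sqrt u+\epsilon$ (the stray $\Dt^2$ being harmless, or absorbable via $\Dt\leq\sqrt u$).

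The main obstacle I anticipate is bookkeeping the interplay of the two small parameters $\eta$ and $\Dt$ correctly: one must use $\eta\leq 8\Dt$ to turn the $O(\eta)$- and $O(\eta^2)$-type errors from \cref{lemma:errhbfe,lemma:hatbfe} into $\Dt$-scaled errors, and then crucially use the hypothesis $\Dt\leq\sqrt u$ to ensure that after division by $\delta^{-1}=\Dt/\sqrt u$ the rounding contribution $u/\delta=\sqrt u\,\Dt$ stays at the claimed order while the truncation contribution $\delta\Dt^2=\sqrt u\,\Dt$ also does — i.e.\ $\delta=\sqrt u/\Dt$ is precisely the choice that balances these two. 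A secondary point to be careful about is that $\tbfe$ (not $\hbfe$) appears in the second term of \cref{eq:defhDfe}, so one genuinely needs \cref{lemma:errhbfe} there and \cref{lemma:hatbfe} for the first term; conflating the two would give the wrong order. Everything else is routine Taylor expansion of the type already carried out in \cref{lemma:jacapp}.
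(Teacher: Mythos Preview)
Your proposal is correct and follows essentially the same approach as the paper's proof, which likewise splits $\hat\Delta\fe[j]$ into the exact difference quotient $\delta^{-1}(\bfe(\hby^n+\delta\hbd_j)-\bfe(\hby^n))$ plus approximation errors controlled via \cref{lemma:errhbfe,lemma:hatbfe}, Taylor expands, and then uses $\eta=O(\Dt)$ and $\Dt\leq\sqrt{u}$ to reach the claimed order. One small clarification: the smoothness of $\bfe$ should be justified by differentiating the discrete RKC recursion \cref{eq:defbfe} directly (as the paper does), rather than via \cref{def:fe} or \cref{lemma:phif}, which concern the continuous averaged force $\fe$ and not its discrete approximation $\bfe$.
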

\begin{proof}
	Note that the Jacobian of $\bfe$ exists, as it can be obtained by simply differentiating \cref{eq:defbfe}. Using \cref{eq:defhDfe}, the relation $\delta^{-1}=\Dt/\sqrt{u}$, and \cref{lemma:errhbfe,lemma:hatbfe}, we obtain
	\begin{equation}
	\hat\Delta\fe[j]  = \delta^{-1}\left(\bfe(\hby^n+\delta\hbd_j)-\bfe(\hby^n)\right)+\bm{r} = \bfe'(\hby^n)\hbd_j+\bm{r}+\bm{t},
	\end{equation}
	where again $\bm{r}$, $\bm{t}$ represent rounding and truncation errors, respectively, and satisfy
	\begin{equation}
	\Vert\bm{r}\Vert_2\leq C\delta^{-1}(u+\epsilon\eta+\eta^2)\leq C(\sqrt{u}\Dt+\epsilon\eta\Dt/\sqrt{u}+\eta^2\Dt/\sqrt{u}),\qquad 
	\Vert\bm{t}\Vert_2\leq C\delta^{-1}\Vert\delta\hbd_j\Vert_2^2\leq C\delta\Dt^2\leq C\sqrt{u}\Dt.
	\end{equation}
	Using $\eta=O(\Dt)$ (see \cite{AGR20}), and $\Dt\leq\sqrt{u}$, we obtain $\Vert\bm{r}\Vert_2+\Vert\bm{t}\Vert_2=O(\sqrt{u}\Dt+\epsilon\Dt+\Dt^2)$, which concludes the proof.
\end{proof}

\end{appendices}

\end{document}